\newtheorem{theorem}{Theorem}[section]
\newtheorem{proposition}[theorem]{Proposition}
\newtheorem{lemma}[theorem]{Lemma}
\newtheorem{corollary}[theorem]{Corollary}
\newtheorem{remark}[theorem]{Remark}
\newcommand{\ble}{\begin{lemma}}
\newcommand{\ele}{\end{lemma}}
\newcommand{\be}{\begin{equation*}}
\newcommand{\ee}{\end{equation*}}
\newcommand{\bel}{\begin{equation}}
\newcommand{\eel}{\end{equation}}
\newcommand{\ep}{\varepsilon}
\newcommand{\fr}{\frac }
\newcommand{\lap}{\Delta}
\newcommand{\N}{\mathbb{N}}
\newcommand{\R}{\mathbb{R}}
\renewcommand{\to}{\rightarrow}
\newcommand{\upp}{u_p}
\def\sideremark#1{\ifvmode\leavevmode\fi\vadjust{\vbox to0pt{\vss
 \hbox to 0pt{\hskip\hsize\hskip1em
 \vbox{\hsize2.1cm\tiny\raggedright\pretolerance10000
  \noindent #1\hfill}\hss}\vbox to15pt{\vfil}\vss}}}%
\begin{document}

\numberwithin{equation}{section}
\parindent=0pt
\hfuzz=2pt
\frenchspacing

\title[]{Exact Morse index computation for nodal radial solutions of Lane-Emden problems}

\author[]{Francesca De Marchis, Isabella Ianni, Filomena Pacella}

\address{Francesca De Marchis, University of Roma {\em Sapienza}, P.le Aldo Moro 8, 00185 Roma, Italy}
\address{Isabella Ianni, Seconda Universit\`a degli Studi di Napoli, V.le Lincoln 5, 81100 Caserta, Italy}
\address{Filomena Pacella, University of Roma {\em Sapienza}, P.le Aldo Moro 8, 00185 Roma, Italy}

\thanks{2010 \textit{Mathematics Subject classification:} 35B05, 35B06, 35J91. }

\thanks{ \textit{Keywords}: superlinear elliptic boundary value problem, sign-changing radial solution,  Morse index.}

\thanks{Research partially supported by: FIRB project \textquotedblleft{Analysis and Beyond}\textquotedblright, PRIN $201274$FYK7$\_005$ grant and INDAM - GNAMPA}

\begin{abstract} We consider the semilinear Lane-Emden problem
\begin{equation}\label{problemAbstract}\left\{\begin{array}{lr}-\Delta u= |u|^{p-1}u\qquad  \mbox{ in }B\\
u=0\qquad\qquad\qquad\mbox{ on }\partial B
\end{array}\right.\tag{$\mathcal E_p$}
\end{equation}
where $B$ is the unit ball of $\R^N$, $N\geq2$, centered at the origin and $1<p<p_S$, with $p_S=+\infty$ if $N=2$ and $p_S=\frac{N+2}{N-2}$ if $N\geq3$.
Our main result is to prove that in dimension $N=2$ the Morse index of the least energy sign-changing radial solution $u_p$ of \eqref{problemAbstract} is exactly $12$ if $p$ is sufficiently large. As an intermediate step we compute explicitly  the first eigenvalue of a limit weighted problem in $\R^N$ in any dimension $N\geq2$.
\end{abstract}

\maketitle

\section{Introduction}\label{Introduction}

\

We consider the classical Lane-Emden problem
\begin{equation}\label{problem}\left\{\begin{array}{lr}-\Delta u= |u|^{p-1}u\qquad  \mbox{ in }B\subset\R^N\\
u=0\qquad\qquad\qquad\mbox{ on }\partial B
\end{array}\right.
\end{equation}
where $B$ is the unit ball of $\R^N$, $N\geq 2$, centered at the origin and $1<p<p_S$, with $p_S=+\infty$ if $N=2$ and $p_S=2^*-1=\frac{N+2}{N-2}$ if $N\geq3$.\\
It is well know that, due to the oddness of the nonlinearity, \eqref{problem} admits infinitely many solutions. In particular exactly two of them have constant sign and are radial, while all the others change sign. Among these ones, one can select the least energy sign changing solution whose existence can be proved by minimizing the associated energy functional on the \emph{nodal Nehari set} in the space $H^1_0(B)$, exploiting the subcriticality of the exponent $p$ (see \cite{CCN} and \cite{BartschWeth} for details). Several properties of these \emph{minimal} solutions can be proved, in particular they have only two nodal regions and their Morse index is precisely two. We recall that the \emph{Morse index} $m(u)$ of a solution $u$ of \eqref{problem} is the maximal dimension of a subspace $X\subset H^1_0(B)$ where the quadratic form associated to the linearized operator at $u$
\[
L_u=(-\lap-p|u|^{p-1})
\]
is negative definite. Equivalently, since $B$ is a bounded domain, $m(u)$ can be defined as the number of the negative eigenvalues of $L_u$ counted with their multiplicity.

By doing the same minimizing procedure on the nodal Nehari set in the Sobolev space of radial functions $H^1_{0,rad} (B)$ one ends up with a least energy radial sign changing solution $u_p$ of \eqref{problem} whose \emph{radial} Morse index, i.e. in the space $H^1_{0,rad} (B)$, is precisely $2$.

For some time it was an open question to establish whether the least energy radial nodal solution $u_p$ was the least energy nodal solution in the whole space $H^1_0(B)$ or not. This question was answered in \cite{AftalionPacella} by showing, for general semilinear elliptic problems with autonomous nonlinearities, that radial nodal solutions, in balls or annuli, have Morse index greater than or equal to $N+2$ (see Lemma 3.1), so they cannot be the least energy nodal solutions.

As a consequence the question of estimating or computing the Morse index $m(u_p)$ of the least energy nodal radial solution $u_p$ in the whole space $H^1_0(B)$ raised.\\
In this paper we analyze this problem and our main result is the computation of $m(u_p)$, in dimension $N=2$, for large exponents. More precisely we have:

\begin{theorem}\label{teoPrincipale}
Let $N=2$ and $u_p$ be the least energy sign-changing radial solution to \eqref{problem}. Then
\[
m(u_p)=12 \qquad\mbox{ for $p$ sufficiently large }
\]
where $m(u_p)$ is the Morse index of $u_p$ in $H^1_0(B)$.
\end{theorem}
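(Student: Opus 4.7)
The plan is to reduce the computation of $m(u_p)$ to counting the negative eigenvalues of a family of one-dimensional singular Sturm-Liouville problems, one for each angular Fourier mode, and then, via an asymptotic analysis as $p\to\infty$, to identify these negative eigenvalues with eigenvalues of a limit weighted problem on $\R^2$ whose first eigenvalue is the intermediate result announced in the introduction.

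Since $N=2$ and $u_p$ is radial, the quadratic form associated to $L_{u_p}=-\Delta-p|u_p|^{p-1}$ diagonalizes on the angular Fourier decomposition: writing $\phi\in H^1_0(B)$ in polar coordinates as $\phi(r,\theta)=\psi_0(r)+\sum_{k\geq 1}(\psi_k^c(r)\cos(k\theta)+\psi_k^s(r)\sin(k\theta))$, one obtains
\[
m(u_p)=m_0(u_p)+2\sum_{k\geq 1}m_k(u_p),
\]
where $m_k(u_p)$ is the number of negative eigenvalues of the singular operator
\[
L_k\psi:=-\psi''-\tfrac{1}{r}\psi'+\tfrac{k^2}{r^2}\psi-p|u_p|^{p-1}\psi
\]
on $(0,1)$ with Dirichlet boundary condition at $r=1$. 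Since $u_p$ is the least energy sign-changing \emph{radial} solution, its Morse index in $H^1_{0,rad}(B)$ equals $2$, that is $m_0(u_p)=2$. It therefore suffices to prove $\sum_{k\geq 1}m_k(u_p)=5$ for $p$ large; the natural target is $m_k(u_p)=1$ for $1\leq k\leq 5$ and $m_k(u_p)=0$ for $k\geq 6$.

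To analyze each $m_k(u_p)$ I would use the known asymptotic description of $u_p$ in dimension two: as $p\to\infty$, the positive and negative nodal components of $u_p$ concentrate around two distinct radii, and after the usual Liouville-type rescaling around each concentration point, $u_p$ converges to an explicit radial entire solution of an equation of exponential type, while $p|u_p|^{p-1}$ converges to a weight of the form $V(x)=\frac{C}{(1+|x|^2)^2}$. Rescaling $L_k$ around each bubble and passing to the limit reduces the eigenvalue problem for $L_k$ to a weighted eigenvalue problem $-\Delta v=\lambda V v$ in $\R^2$, whose first eigenvalue is the quantity computed in the intermediate step; the remaining part of its spectrum is then obtained via separation of variables, with the associated eigenspaces linked to spherical harmonics through the conformal invariance of the limit equation. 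Using these explicit eigenvalues and eigenfunctions one determines, mode by mode, how many negative eigenvalues $L_k$ admits for large $p$: a lower bound for $m_k(u_p)$ comes from test functions localized at the two bubbles, while the matching upper bound comes from a compactness/decomposition argument together with a min-max characterization of the negative eigenvalues.

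The main obstacle is the precise spectral matching between $L_k$ and the limit problem: one must show that no eigenvalue of $L_k$ close to zero is created or lost in the limit, and that the simultaneous presence of two bubbles plus the intermediate annular region where $u_p$ is small do not produce spurious small eigenvalues. This requires uniform $C^1$ bounds on $u_p$, sharp control of $u_p$ near the nodal sphere, and carefully tailored cut-offs adapted to each bubble. The upper bound $m_k(u_p)=0$ for $k\geq 6$ is expected to follow from a quantitative comparison between the centrifugal term $k^2/r^2$ and the potential $p|u_p|^{p-1}$, exploiting the fact that the latter is concentrated on small annuli where $r^{-2}$ is controlled. Once these ingredients are in place, the mode-by-mode count yields exactly $2+2\cdot 5=12$ negative eigenvalues, completing the proof of Theorem \ref{teoPrincipale}.
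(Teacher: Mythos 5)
Your overall skeleton (angular Fourier decomposition, $m_0(u_p)=2$ from the radial Morse index, mode-by-mode counting against a limit weighted eigenvalue problem) is parallel to the paper's strategy, but the proposal has a genuine gap exactly where the number $12$ is decided. You take as limit weight $V(x)=\frac{C}{(1+|x|^2)^2}$, i.e.\ the profile of the \emph{regular} Liouville bubble obtained by rescaling the positive part of $u_p$. However, the cutoff between the modes $k\le 5$ that contribute and the modes $k\ge 6$ that do not is produced entirely by the \emph{negative} nodal region: there the rescaled limit is the singular Liouville solution $Z_\ell$ of \eqref{Zell}, and the rescaled potential converges to $e^{Z_\ell}(x)\sim |x|^{\gamma}$ near the origin with $\gamma=\sqrt{2\ell^2+4}-2$ as in \eqref{varie}, which is not of the form you state. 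The first radial eigenvalue of the weighted problem with this singular weight equals $-\frac{(\gamma+2)^2}{4}=-\frac{\ell^2+2}{2}\simeq-26.9$ (obtained from the value $-(N-1)=-1$ of the limit problem with the regular weight only after the power-type change of variables $s\mapsto \delta(s/2\sqrt2)^{2/(2+\gamma)}$), and it is precisely the inclusion $-36<-\frac{\ell^2+2}{2}<-25$ that gives one negative eigenvalue for each mode $1\le k\le5$ and none for $k\ge6$, hence $2+2\cdot5=12$. With the regular-bubble weight alone the relevant eigenvalue would be $-1$ and no non-radial mode would go negative, so your ``natural target'' $m_k=1$ for $1\le k\le 5$ is asserted rather than derived; the exact value of $\ell$ (equivalently $\gamma$) must enter, and your proposal contains no mechanism producing it.

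Two further steps you sketch would not close as stated. First, for $k\ge 6$ you propose a pointwise comparison of $k^2/r^2$ with $p|u_p|^{p-1}$; since $\sup_r p|u_p(r)|^{p-1}r^2\to \ell^2+2\simeq 51.8$, this only rules out negative eigenvalues for $k^2\gtrsim 52$, i.e.\ $k\ge 8$, leaving $k=6,7$ open. The sharp statement needed is the asymptotic $\widetilde{\beta_1}(p)\to-\frac{\ell^2+2}{2}$ from both sides, and the lower bound is the delicate part: one must exclude that the rescaled first eigenfunction concentrates away from the negative bubble (in the paper this is the non-vanishing result of Proposition \ref{prop:m>0}, which rests on the study of $f_p(r)=p|u_p(r)|^{p-1}r^2$ in both nodal regions and on the estimate \eqref{Q3}); your ``compactness/decomposition argument'' does not identify this obstruction. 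Second, you need $m_k\le 1$ for every $k\ge1$, i.e.\ that the \emph{second} eigenvalue of each $L_k$ is nonnegative; in the paper this is Proposition \ref{LemmaStimeAutovaloriRadialeConPeso Iparte}, proved by a Sturm comparison with $\eta=\partial_r u_p$, which solves the weighted equation with eigenvalue $-(N-1)$, and it is not a consequence of the bubble analysis alone. Incorporating the singular bubble $Z_\ell$, the non-vanishing argument, and the Sturm-type bound on the second radial eigenvalue would essentially reproduce the paper's proof; without them the count $12$ is not established.
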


Let us explain how we achieve the result and why we get it in the two dimensional case and for large exponents $p$.

Since our solution $u_p$ is radial, to study the spectrum of the linearized operator $L_p:=L_{u_p}$ a suitable procedure could be to decompose it as a sum of the spectrum of a radial weighted operator and the spectrum of the Laplace-Beltrami operator on the unit sphere. This works well when the domain is an annulus (see for example \cite{BartschClappGrossiPacella} and \cite{GladialiGrossiPacellaSrikanth}) but leads to a weighted eigenvalue problem with a singularity at the origin if the domain is a ball. To bypass this difficulty we first approximate the ball $B$ by annuli $A_n$ with a small hole, showing that the number of negative eigenvalues of the linearized operator $L_p$ is preserved (see Section \ref{section:linearizedOperator}).\\
Then the computation of the Morse index of $L_p$ in $B$ corresponds to estimate the eigenvalues of the operator
\[
\widetilde{L_{p}^n} = |x|^2 \left(  -\Delta -V_p(x) \right)
\]
in $H^1_0(A_n)$, where the potential $V_p(x)$ is $p|u_p(x)|^{p-1}$ (see Section \ref{section:auxiliaryweightedoperator}). In particular it turns out that the Morse index of $u_p$ is determined mainly by the \emph{size} of the first (radial) eigenvalue $\widetilde{\beta_1}(p)$ of this operator, with $n=n_p$ fixed properly.

In order to study this eigenvalue, a \emph{good knowledge} of the potential $V_p(x)$ is needed, in other words this means to know qualitative properties of the solution $u_p$ of \eqref{problem}.
Here is where the hypotheses on the dimension and on the exponent $p$ enter.

Recently, in the paper \cite{GrossiGrumiauPacella2}, a very accurate analysis of the asymptotic behavior of the least energy radial nodal solution $u_p$ of \eqref{problem} in the ball in dimension $N=2$ has been done, as the exponent $p$ tends to infinity.\\
In particular it has been shown that a suitable rescaling of the positive part $u^+_p$ (assuming $u_p(0)>0$) converges to a regular solution of the Liouville problem in $\R^2$, while a suitable rescaling of the negative part $u^-_p$ converges to a solution of a singular Liouville problem in $\R^2$ (see also \cite{DeMachisIanniPacellaJEMS} for more general symmetric domains).

This allows to detect precisely the asymptotic behavior as $p\to+\infty$ of the \emph{crucial} eigenvalue $\widetilde{\beta_1}(p)$ by several nontrivial estimates (see Section \ref{section:analysis}). Let us point out that the results in Section \ref{section:analysis}, in particular Lemma \ref{lemma:primastimabetapN=2}, show clearly that the contribution to the Morse index of $u_p$ comes mainly from the negative nodal region of $u_p$. It is interesting also to observe the relation between the value of $m(u_p)$ obtained in Theorem \ref{teoPrincipale} and the value of the Morse index of the radial solution of the singular Liouville problem in the whole plane which has been computed in \cite{ChenLin} (and also in \cite{GladialiGrossiNeves}), see  Remark \ref{remarkRelation} ahead.

\

The asymptotic analysis fulfilled  in \cite{GrossiGrumiauPacella2} and \cite{DeMachisIanniPacellaJEMS} allows also to prove a peculiar blow-up (in time) behavior of the solutions of the associated parabolic problem with initial data close to these nodal stationary solutions, for $p$ sufficiently large (\cite{DeMarchisIanni, DicksteinPacellaSciunzi}).

\

In the case of higher dimensions, $N\geq3$, such an accurate asymptotic analysis of $u_p$, as $p\to p_S$ is not yet available. Indeed the results of \cite{BAEMP}, where low energy nodal solutions of almost critical problems are studied, do not allow to carry on all the estimates needed to compute the limit of $\widetilde{\beta_1}(p)$, as $p\to p_S=\frac{N+2}{N-2}$. Therefore the study of the case $N\geq 3$ needs to be considered separately (see \cite{DeMarchisIanniPacellaNgeq3}).

\

Finally, let us point out that another important step for the proof of Theorem \ref{teoPrincipale} is to compute the first eigenvalue of the \emph{limit} weighted operator
\[
\widetilde L^* =|x|^{2}\left[-\Delta -V(x)\right],\qquad x\in\mathbb R^N
\]
with $V$ defined as in \eqref{limiteV}. This is done in Section \ref{section:limitWeighted} in every  dimension $N\geq2$ and we believe that the result could be useful also for other problems.
\tableofcontents

\

\section{Preliminary results in dimension $N=2$} \label{section:preliminaries}

\

In this section we state previous results about the asymptotic behavior of nodal solutions of \eqref{problem} in dimension $N=2$.
We start by recalling the following well known qualitative properties for radial least energy nodal solutions (which actually hold in any dimension $N\geq 2$):
\begin{proposition}\label{PropositionUnicoMaxeMin}  Let $(u_p)$ be a family of least energy radial nodal solutions to \eqref{problem} with $u_p(0)>0$, then:
\begin{itemize}
\item[$(i)$] $u_p$ has exactly $2$ nodal regions
\item[$(ii)$] $u_p(0)=\|u\|_{\infty}$
\item[$(iii)$] in each nodal region there is exactly one critical point (namely the maximum and the minimum points)
\end{itemize}
\end{proposition}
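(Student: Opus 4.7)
The plan is to prove the three claims in the order (i), (iii), (ii): item (i) via a standard energy comparison on the radial nodal Nehari set, and items (iii), (ii) via a sign/Hamiltonian analysis of the radial ODE satisfied by $u_p(r)$.

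For (i), I would argue by contradiction. Suppose $u_p$ has $k \geq 3$ nodal regions; by radial symmetry these are concentric shells $A_1,\dots,A_k$ separated by spheres $\{|x|=r_j\}$, with $0<r_1<\cdots<r_{k-1}<1$, and alternating signs starting from $u_p>0$ on $A_1$. Setting $u_j := u_p\chi_{A_j}\in H^1_{0,rad}(B)$, testing \eqref{problem} against $u_j$ gives the Nehari identity $\int|\nabla u_j|^2 = \int|u_j|^{p+1}$. The function $v := u_1+u_2$ is then a radial sign-changing element of $H^1_{0,rad}(B)$ whose positive part $u_1$ and negative part $-u_2$ both satisfy the Nehari identity, hence $v\in\mathcal M_{rad}$; however
\[
E(v) = E(u_1)+E(u_2) < \sum_{i=1}^k E(u_i) = E(u_p),
\]
the strict inequality following from $E(u_i) = \bigl(\tfrac12-\tfrac{1}{p+1}\bigr)\int|u_i|^{p+1}>0$ for every $i$. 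This contradicts the minimality of $u_p$ in $\mathcal M_{rad}$.

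For (iii), let $r_0\in(0,1)$ denote the unique zero granted by (i), and rewrite the radial equation as $(r^{N-1}u')' = -r^{N-1}|u|^{p-1}u$. On $(0,r_0)$ the right-hand side is negative, so $r^{N-1}u'(r)$ strictly decreases from $0$, forcing $u'<0$ on $(0,r_0]$; hence the only critical point inside is the maximum at the origin. On $(r_0,1)$ the right-hand side is positive, so $r^{N-1}u'(r)$ strictly increases from a negative value at $r_0$, and must cross zero at exactly one $r^*\in(r_0,1)$ (otherwise $u$ could not return to the boundary value $0$), producing the unique (minimum) critical point of the outer region. For (ii), I would introduce the radial Hamiltonian $H(r) := \tfrac12(u'(r))^2 + \tfrac{1}{p+1}|u(r)|^{p+1}$; using the equation,
\[
H'(r) = u'(r)\bigl[u''(r) + |u|^{p-1}u\bigr] = -\tfrac{N-1}{r}(u'(r))^2 \leq 0,
\]
with strict inequality wherever $u'\neq 0$. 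Since $u'<0$ throughout $(0,r^*)$ by (iii), one obtains $H(0) > H(r^*)$, i.e.\ $u_p(0)^{p+1} > |u_p(r^*)|^{p+1}$, hence $u_p(0) > |u_p(r^*)|$. Combined with (iii), this yields $u_p(0)=\|u_p\|_\infty$.

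I do not expect any major obstacle. The cut-and-paste comparison in (i) is the step that most deserves checking, as one must verify that $v$ truly lies in the radial nodal Nehari class and has strictly smaller energy; both facts follow from the autonomy of the nonlinearity $|u|^{p-1}u$ and from $p>1$. Steps (iii) and (ii) are then direct consequences of the radial ODE and of the observation that the first-order drift $\tfrac{N-1}{r}u'$ acts as a dissipation for the natural energy $H$ when $N\geq 2$.
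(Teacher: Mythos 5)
Your proof is correct, but note that the paper does not actually prove Proposition \ref{PropositionUnicoMaxeMin}: it is recalled as a list of well-known properties of least energy radial nodal solutions (valid in every dimension $N\geq 2$), with the underlying facts attributed to the literature on nodal Nehari minimization (\cite{CCN}, \cite{BartschWeth}). What you give is essentially the standard self-contained argument behind those citations: the cut-and-paste energy comparison on the radial nodal Nehari set for $(i)$ (each truncation $u_j$ lies in $H^1_{0,rad}(B)$, satisfies the Nehari identity $\int|\nabla u_j|^2=\int|u_j|^{p+1}$, hence carries strictly positive energy, so discarding any third nodal component produces an admissible competitor of strictly smaller energy), the sign analysis of $(r^{N-1}u_p')'=-r^{N-1}|u_p|^{p-1}u_p$ for $(iii)$, and the dissipated Hamiltonian $H(r)=\tfrac12 u_p'(r)^2+\tfrac1{p+1}|u_p(r)|^{p+1}$, with $H'(r)=-\tfrac{N-1}{r}u_p'(r)^2\leq 0$, for $(ii)$. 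All three steps check out; the only details worth spelling out in a full write-up are that zeros of the radial profile are simple (by ODE uniqueness $u_p(r_0)=u_p'(r_0)=0$ would force $u_p\equiv 0$), so the nodal regions are genuinely concentric shells, the truncations are legitimate $H^1_{0,rad}$ test functions, and $u_p'(r_0)<0$ strictly; and that $r^{N-1}u_p'(r)\to 0$ as $r\to 0^+$ because $u_p$ is a classical radial solution smooth at the origin, which is what lets you start the monotonicity of $r^{N-1}u_p'$ from the value $0$ in the inner region and then run the Rolle-plus-monotonicity count of critical points in the outer one.
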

From now on we will denote by $r_p$ the unique \emph{nodal radius} of $u_p$ and by $s_p$ the unique \emph{minimum radius} of $u_p$ i.e., writing with abuse of notation $u_p(r)=u_p(|x|)$, 
\begin{equation} \label{rp}
r_p\in (0,1)\ \mbox{ is such that }\ u_p(r_p)=0
\end{equation} 
and
\begin{equation}\label{sp}
s_p\in (r_p,1)\ \mbox{ is such that }\ \|u_p^-\|_{\infty}=u_p^-(s_p)=-u_p(s_p),
\end{equation}
where $u_p^-$ is the negative part of $u_p$.
\\
\\
Next we recall the results obtained in \cite{GrossiGrumiauPacella2} for least energy radial nodal solutions that we summarize in the following theorem.

\

\begin{theorem}\label{thm:GGP2} Let $N=2$ and let $(u_p)$ be a family of least energy radial nodal solutions to \eqref{problem} with $u_p(0)>0$. Let us define
\begin{eqnarray}\label{epsilon+-}
(\varepsilon_p^+)^{-2}&:=&
p u_p(0)^{p-1},\nonumber \\
(\varepsilon_p^-)^{-2}&:=&
p u_p(s_p)^{p-1},
\end{eqnarray}
and the rescaled functions
\begin{eqnarray}\label{zp}
&& z_p^+(x):=p\frac{u_p(\varepsilon_p^+ x)-u_p(0)}{u_p(0)}, \ \  x\in\frac{B}{\varepsilon_p^+}
\\\label{zm}
&& z_p^-(x):=p\frac{u_p(\varepsilon_p^- x)-u_p(s_p)}{u_p(s_p)} , \ \  x\in\frac{B}{\varepsilon_p^-}.
\end{eqnarray}
Then
\begin{eqnarray}
&&\varepsilon_p^{\pm}\underset{p\rightarrow +\infty}{\longrightarrow} 0\\
\label{epsilonpmN2}
&&z_p^+ \underset{p\rightarrow +\infty}{\longrightarrow} U\quad\mbox{in $C^1_{loc}(\R^2)$}
\label{zppiu2}
\\
&&z_p^-  \underset{p\rightarrow +\infty}\longrightarrow Z_{\ell}\quad\mbox{in $C^1_{loc}(\R^2\setminus\{0\})$}
\label{zpmeno2}
\end{eqnarray}
where
\begin{equation}\label{U}
    U(x):=\log\left(\fr1{1+\fr18 |x|^2}\right)^2
    \end{equation}
is the regular solution of
\begin{equation}
\label{LiouvilleEquation}
\left\{
\begin{array}{lr}
-\Delta U=e^U\quad\mbox{ in }\R^2\\
\int_{\R^2}e^Udx= 8\pi, U(0)=0
\end{array}
\right.
\end{equation} 
and
\begin{equation}\label{Zell}
Z_\ell(x):=\log\left(\frac{2(\gamma+2)^2\delta^{\gamma+2}|x|^{\gamma}}{(\delta^{\gamma+2}+|x|^{\gamma+2})^2}\right),
\end{equation}
with
\begin{equation}\label{varie}
\ell=\lim_{p\to+\infty}\frac{s_p}{\varepsilon^-_p}\approx 7.1979,\qquad \gamma=\sqrt{2\ell^2+4}-2,\qquad \delta=(\frac{\gamma+4}{\gamma})^{\frac{1}{\gamma+2}}\ell,
\end{equation}
is a singular radial solution of
\begin{equation}
\label{LiouvilleSingularEquation}
\left\{
\begin{array}{lr}
-\Delta Z=e^Z+ H\delta_{0}\quad\mbox{ in }\R^2\\
\int_{\R^2}e^Zdx<\infty
\end{array}
\right.
\end{equation}
where $H=-\int_0^\ell e^{Z_\ell(s)}s\,ds$ and $\delta_{0}$ is the Dirac measure centered at $0$.

Moreover if we denote by $r_p$ the nodal radius of $\upp$, then
\begin{eqnarray}\label{quindi:rapportoepsilon+-}
&&\frac{r_p}{\varepsilon^+_p}\underset{p\rightarrow +\infty} {\longrightarrow}+\infty,\qquad \frac{\varepsilon^-_p}{r_p}\underset{p\rightarrow +\infty} {\longrightarrow}+\infty.\end{eqnarray}
\end{theorem}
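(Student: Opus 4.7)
My plan is to carry out a blow-up analysis around the two natural concentration scales $\varepsilon_p^+$ and $\varepsilon_p^-$, exploiting the radial structure and the classification of solutions of Liouville-type equations in the plane.

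The first step is to establish that $u_p(0) \to +\infty$ and $|u_p(s_p)| \to +\infty$ as $p \to +\infty$; this yields $\varepsilon_p^{\pm} \to 0$ directly from the definitions in \eqref{epsilon+-}. Both facts follow from a standard Pohozaev-type energy argument combined with the least-energy nodal characterization (the energy of $u_p$ stays bounded between two copies of the positive ground state's energy level), which forces the $L^\infty$ norm on each nodal component to diverge (otherwise one would contradict either the equation's scaling or the lower bound on the energy). Proposition \ref{PropositionUnicoMaxeMin} guarantees that the maximum and minimum are attained once each, so the rescaling centers are well defined.

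The second step handles $z_p^+$. A direct computation shows that $z_p^+$ satisfies
\begin{equation*}
-\Delta z_p^+(x) = \left|1+\frac{z_p^+(x)}{p}\right|^{p-1}\left(1+\frac{z_p^+(x)}{p}\right)\quad \text{in } B/\varepsilon_p^+,
\end{equation*}
with $z_p^+(0)=0$ and $z_p^+\le 0$ in $\{|x|<r_p/\varepsilon_p^+\}$. Standard elliptic estimates together with a Harnack-type argument (and a preliminary bound $|z_p^+|\le C$ on compact sets, proved by comparison with a small perturbation of the Liouville solution) allow one to pass to the limit in $C^1_{loc}$; the right-hand side converges to $e^{z_\infty}$, so the limit $z_\infty$ solves $-\Delta z_\infty = e^{z_\infty}$ with $z_\infty(0)=0$. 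The energy bound $\int e^{z_\infty}\le 8\pi$ together with the uniqueness result of Chen-Li for radial solutions on $\mathbb{R}^2$ identifies $z_\infty \equiv U$ as in \eqref{U}. A simultaneous diagonal argument on the growth of $r_p/\varepsilon_p^+$ gives $r_p/\varepsilon_p^+\to +\infty$.

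The third step, which I expect to be the hardest, is the analysis of $z_p^-$ and the emergence of the singular Dirac source. Here the rescaling is again centered at $0$ but with parameter $\varepsilon_p^-$; since $\varepsilon_p^- \gg \varepsilon_p^+$ and $\varepsilon_p^- \gg r_p$ (the latter being \eqref{quindi:rapportoepsilon+-}, proved by showing that otherwise the positive bubble's mass would collapse into the negative region and violate the $L^1$ balance of $-\Delta u_p$), in the $\varepsilon_p^-$-scale the whole positive nodal region shrinks to a single point at the origin, while the minimum radius $s_p/\varepsilon_p^-$ stabilizes to a finite limit $\ell$. The function $z_p^-$ satisfies the same type of PDE as $z_p^+$ on the negative side, and passing to the limit in $C^1_{loc}(\mathbb{R}^2\setminus\{0\})$ produces a radial solution of $-\Delta Z = e^Z$ on $\mathbb{R}^2\setminus\{0\}$ with a non-removable singularity at the origin; the residual mass of the contracted positive bubble appears as the Dirac term $H\delta_0$ in \eqref{LiouvilleSingularEquation}, with $H$ computed by integration of the limit equation against a cut-off. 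The explicit profile $Z_\ell$ in \eqref{Zell} and the identification of the triple $(\ell,\gamma,\delta)$ in \eqref{varie} then come from the Prajapat-Tarantello/Chen-Lin classification of singular radial Liouville solutions combined with two matching conditions: the normalization $Z_\ell(\ell)=0$ at the rescaled minimum and the relation $\gamma+2=\sqrt{2\ell^2+4}$ obtained by equating the total mass of $-\Delta u_p$ on each nodal region via a Pohozaev identity across the interface $r=r_p$. The transcendental condition that fixes $\ell\approx 7.1979$ comes from the energy balance $\int_{\mathbb{R}^2}e^{Z_\ell}+H = 0$ combined with the Liouville mass relation $2\pi(\gamma+2)$, and is the main delicate computation; up to that matching, all other points are byproducts of the two concentration analyses and the scale separation \eqref{quindi:rapportoepsilon+-}.
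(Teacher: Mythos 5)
A preliminary remark: this paper contains no proof of Theorem \ref{thm:GGP2} at all --- it is stated as a summary of results proved in \cite{GrossiGrumiauPacella2} --- so the only possible comparison is with that cited work. Your outline does reproduce its overall architecture (two blow-ups at the scales $\varepsilon_p^{\pm}$, a regular Liouville limit for $z_p^+$, a singular Liouville limit for $z_p^-$ with the positive bubble collapsing to a Dirac mass), but two of the claims you lean on are genuinely wrong. First, your opening step asserts that $u_p(0)\to+\infty$ and $|u_p(s_p)|\to+\infty$. In dimension $2$ this is false: as $p\to+\infty$ the extremal values of $u_p$ stay bounded and converge to finite limits, and this boundedness is precisely what makes the expansion $|1+z_p^{\pm}/p|^{p}\to e^{z}$ and the whole Liouville picture work (moreover the finite limit of the ratio $u_p(0)/|u_p(s_p)|$ enters the identification of the singular limit, see below). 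The convergence $\varepsilon_p^{\pm}\to 0$ in \eqref{epsilon+-} comes instead from the factor $p$ together with the elementary bound $\|u_p\|_{L^\infty(\mathcal O)}^{p-1}\geq\lambda_1(\mathcal O)\geq\lambda_1(B)$ on each nodal domain $\mathcal O$, not from a divergence of the sup norms; an argument that ``forces the $L^\infty$ norm on each nodal component to diverge'' cannot be repaired, because the conclusion it aims at is false.

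The more serious gap is in your identification of $(\gamma,\delta,\ell)$ in \eqref{varie}. The two relations expressing $\delta$ and $\gamma$ through $\ell$ are not obtained from a Pohozaev balance across $r=r_p$: they are exactly the limits of the two conditions at the minimum radius, namely $z_p^-=0$ and $\partial_r z_p^-=0$ at $|x|=s_p/\varepsilon_p^-\to\ell$, i.e. $Z_\ell(\ell)=0$ and $Z_\ell'(\ell)=0$; a direct computation with \eqref{Zell} shows these two equations are equivalent to $\delta=((\gamma+4)/\gamma)^{1/(\gamma+2)}\ell$ and $\gamma(\gamma+4)=2\ell^2$, i.e. $\gamma=\sqrt{2\ell^2+4}-2$. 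You impose only $Z_\ell(\ell)=0$, attribute the $\gamma$--relation to a mass identity, and then propose to fix the numerical value of $\ell$ through the ``energy balance'' $\int_{\R^2}e^{Z_\ell}dx+H=0$. That identity is false: since $Z_\ell(x)=-(\gamma+4)\log|x|+O(1)$ as $|x|\to\infty$, integrating \eqref{LiouvilleSingularEquation} over large balls shows that $e^{Z_\ell}$ carries the full singular-Liouville mass (proportional to $\gamma+2$), while $|H|$ corresponds only to the portion of that mass inside $B_\ell$ (indeed $-H=\int_0^\ell e^{Z_\ell(s)}s\,ds=\gamma$), so the two cannot cancel and your transcendental equation does not exist. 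Consequently your scheme leaves the crucial constant $\ell\approx 7.1979$ undetermined --- and this is exactly the delicate point of \cite{GrossiGrumiauPacella2}: there $\ell$ is pinned down by matching the Dirac coefficient with the limit of the concentrated positive mass $p\int_{B_{r_p}}u_p^p\,dy$ measured in units of $|u_p(s_p)|$, which brings in the finite limits of $u_p(0)$ and $|u_p(s_p)|$ (the very quantities your first step sends to infinity) together with a substantial further ODE/energy analysis. For the same reason \eqref{quindi:rapportoepsilon+-} is not a mere ``byproduct'': your one-line $L^1$-balance argument for $\varepsilon_p^-/r_p\to+\infty$ is not a proof.
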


\begin{remark}\label{remarkRelation}
Note that it is the precise value of the constant $\ell$ (see \eqref{varie}) that allows in \cite{GrossiGrumiauPacella2} to determine the unique radial solution $Z_{\ell}$ of the singular Liouville problem to which $z_p^-$ converges.
As shown in \cite{ChenLin}, the Morse index of $Z_{\ell}$ is
\[m(Z_{\ell})=1+2\left[\frac{\sqrt{2\ell^2+4}}{2}\right]=11,\]
(where $\left[x\right]$ denotes the biggest integer which is less or equal than $x$), and the kernel of the linearized operator at $Z_{\ell}$ has dimension
\[k(Z_{\ell})=1.\]

Also in our proof (see Section \ref{section:analysis}) it is crucial to know the exact value of $\ell$ in order to prove that  $m(u_p)$ is precisely $12$.
The fact that
\[m(u_p)=m(Z_{\ell})+k(Z_{\ell}),\]
seems to indicate a connection between the spectrum of the linearized operator at $u_p$ and that of the linearized operator at $Z_{\ell}$. This stresses once again that the relevant contribution to the Morse index of $u_p$ is given by its negative nodal region.
\end{remark}

For more general symmetric domains, as a consequence of a general profile decomposition theorem, in the paper \cite{DeMachisIanniPacellaJEMS} further asymptotic results have been obtained. In particular we recall the following estimate that we will need later, which corresponds to property $(P_3^k)$ in  \cite[Proposition 2.2]{DeMachisIanniPacellaJEMS} (indeed in the radial case the origin is the only absolute maximum point of $|u_p|$ and $k=1$ by \cite[Proposition 3.6]{DeMachisIanniPacellaJEMS}):
\begin{equation}\label{Q3}
p|y|^2|u_p(y)|^{p-1}\leq C \quad \mbox{ for any $y\in B$.}
\end{equation}

\

\

\

\section{Linearized operator and approximation of its eigenvalues}\label{section:linearizedOperator}

\

Let $u_p$ be a solution to \eqref{problem} and let $L_{p}: H^2(B)\cap H^1_0(B)\rightarrow L^2(B)$ be the linearized operator at $u_p$, namely
\begin{equation}\label{linearizedOperator} L_{p} v: =   -\Delta v-p|u_p(x)|^{p-1}v.
\end{equation}
It is well known that $L_p$ admits a sequence of eigenvalues which, counting them according to their multiplicity, we denote by
\[\mu_1(p)< \mu_2(p)\leq\ldots\leq\mu_i(p)\leq\ldots,\quad \mu_i(p)\rightarrow +\infty  \mbox{ as }i\rightarrow +\infty.\]
We also recall their min-max characterization
\begin{eqnarray}\label{CourantCharEigenv}
\mu_i(p) &=& \inf_{\substack{
W\subset H^1_{0}(B)\\ dim W=i}}   \max_{\substack{v\in W\\v\neq 0}}\ \ \
R_p[v],\qquad i\in\N^+
\end{eqnarray}
where $R_p[v]$ is the Rayleigh quotient
\begin{equation}\label{Rayleigh}
R_p[v]:=\frac{Q_p(v)}{\int_B v(x)^2 dx}
\end{equation}
and $Q_p: H^1_0(B)\rightarrow \mathbb R$ denotes the quadratic form associated to $L_p$, namely
\[Q_p (v):=\int_B \left[|\nabla v(x)|^2 -p|u_p(x)|^{p-1}v(x)^2  \right]dx.\]
%
The {\sl Morse index of $u_p$}, denoted by $m(u_p)$, is the maximal dimension of a subspace $X\subseteq H^1_0(B)$ such that $Q_p(v)<0,  \ \forall v\in X\setminus\{0\}$. Since $B$ is a bounded domain this is equivalent to say that $m(u_p)$ is  the number of the negative eigenvalues of $L_p$ counted with their multiplicity.
\\

Now let $u_p$ be a radial solution to \eqref{problem}, then, if it is sign-changing, from \cite{AftalionPacella} we have the following lower bound on its Morse index which applies in particular to least energy sign-changing radial solutions of \eqref{problem}

\

\begin{lemma}\label{LemaAftalionPacella}
Let $p\in (1, p_S)$ and let $u_p$ be any sign-changing radial solution to \eqref{problem}, then
\[m(u_p)\geq N+2\]
\end{lemma}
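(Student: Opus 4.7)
The plan is to decompose the linearized operator $L_p$ via spherical harmonics, reducing the Morse-index computation to a family of one-dimensional Sturm--Liouville problems. Since $u_p$ is radial, $L_p$ commutes with the $SO(N)$ action on $H^1_0(B)$, so any $v \in H^1_0(B)$ can be written as $v(x)=\sum_{k\geq 0}\psi_k(r)Y_k(\theta)$, where $\{Y_k\}$ is a basis of spherical harmonics with Laplace--Beltrami eigenvalues $\lambda_k=k(k+N-2)$ and multiplicities $N_0=1,\ N_1=N,\ldots$. The quadratic form $Q_p$ becomes block-diagonal across modes, and on the $k$-th block it is associated to the weighted radial operator
\[
L_{p,k}\psi := -\psi'' - \frac{N-1}{r}\psi' + \frac{\lambda_k}{r^2}\psi - p|u_p|^{p-1}\psi
\]
on $(0,1)$ with $\psi(1)=0$ and the natural integrability condition at the origin. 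Hence $m(u_p)$ equals the sum over $k\geq 0$ of $N_k$ times the number of negative Dirichlet eigenvalues of $L_{p,k}$, and it suffices to exhibit two negative eigenvalues for $k=0$ and at least one for $k=1$, which already gives $m(u_p)\geq 2+N$.

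For the radial mode $k=0$, I would use Proposition \ref{PropositionUnicoMaxeMin}: $u_p$ has exactly two nodal regions, so $u_p^+$ and $u_p^-$ lie in $H^1_{0,rad}(B)$, have disjoint supports, and integrating by parts against $u_p$ gives
\[
Q_p(u_p^\pm)=(1-p)\int_B|u_p^\pm|^{p+1}\,dx<0,
\]
since $p>1$. This produces a two-dimensional negative-definite subspace of radial functions, so the radial block contributes at least two negative eigenvalues.

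For the mode $k=1$, I would differentiate the radial ODE $-u_p''-(N-1)u_p'/r=|u_p|^{p-1}u_p$ with respect to $r$; a direct computation with $\lambda_1=N-1$ yields $L_{p,1}(u_p')=0$ on $(0,1)$. By Proposition \ref{PropositionUnicoMaxeMin} the only interior critical points of $u_p$ are $r=0$ (maximum) and $r=s_p\in(r_p,1)$ (minimum), so $u_p'(0)=u_p'(s_p)=0$ and $u_p'$ has constant sign on $(0,s_p)$. Therefore $u_p'|_{(0,s_p)}$ is a principal Dirichlet eigenfunction of $L_{p,1}$ on $(0,s_p)$ with eigenvalue zero, and strict domain monotonicity of the principal eigenvalue, applied to $(0,s_p)\subsetneq (0,1)$, forces the first Dirichlet eigenvalue of $L_{p,1}$ on $(0,1)$ to be strictly negative. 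Since degree-one spherical harmonics have multiplicity $N$, this mode contributes $N$ further negative eigenvalues to $L_p$, and combining with the radial contribution yields $m(u_p)\geq 2+N$.

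The step I expect to require the most care is the strict inequality in the domain-monotonicity argument: one must rule out that the extension by zero of $u_p'|_{(0,s_p)}$ to $(0,1)$ already realises the principal eigenvalue. This is so because the equation itself forces $u_p''(s_p)=|u_p(s_p)|^{p-1}u_p(s_p)\neq 0$, so the extension is only continuous and not $C^1$ at $r=s_p$, hence is not a classical solution of $L_{p,1}\psi=0$ on the full interval, and a genuine Rayleigh minimizer must therefore have strictly lower quotient. The remaining ingredients---the rigorous spherical-harmonic reduction in $H^1_0(B)$, the admissibility condition $\psi(0)=0$ for $k\geq 1$, and the min-max identification of negative eigenvalue counts with negative-definite subspaces---are standard.
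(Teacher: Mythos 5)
Your argument is correct and is essentially the proof the paper relies on: the two radial negative directions come from testing $Q_p$ on $u_p^{\pm}$ in the two nodal regions exactly as in the paper's sketch, and your mode-$k=1$ argument with $\partial_r u_p$, the eigenvalue-zero problem on $(0,s_p)$ and strict domain monotonicity is precisely the content of the cited Aftalion--Pacella result that the paper invokes for the $N$ non-radial negative eigenvalues. One harmless slip: the ODE at the critical point gives $u_p''(s_p)=-|u_p(s_p)|^{p-1}u_p(s_p)=|u_p(s_p)|^{p}$, not $|u_p(s_p)|^{p-1}u_p(s_p)$, but all you use is that it is nonzero, which still holds.
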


\

\begin{proof}
The proof is given in \cite{AftalionPacella} for semilinear equations with general autonomous nonlinearities $f(u)$, showing that the linearized operator $L_p$ has at least $N$ negative eigenvalues whose corresponding eigenfunctions are non-radial and do change sign. Therefore, adding the first eigenvalue, which is obviously associated to a radial eigenfunction, one gets at least $N+1$ negative eigenvalues. In the case when $f$ is superlinear, as for $f(u)=|u|^{p-1}u$, $p>1$, then it is easy to see, testing the quadratic form on the solution $u_p$ in each nodal region, that there are at least as many radial negative eigenfunctions as the number of nodal regions of $u_p$. Therefore $m(u_p)\geq N+2$.
\end{proof}

\

When $u_p$ is a radial solution to \eqref{problem} we can also consider the sequence of the radial eigenvalues of $L_p$
%
(i.e. eigenvalues which are associated to a radial eigenfunction) that we denote by  \[\beta_i(p), \quad i\in\N^+\]counting them with their multiplicity.
%
%
For the eigenvalues $\beta_i(p)$ an analogous characterization holds:
\begin{eqnarray}\label{CourantCharEigenvRad}
\beta_i(p) &=& \inf_{\substack{
W\subset H^1_{0,rad}(B)\\ dim W=i}}   \max_{\substack{v\in W\\v\neq 0}}\ \ \
R_p[v]
\end{eqnarray}
where $R_p$ is as in \eqref{Rayleigh} and $H^1_{0,rad}(B)$ is the subspace of the radial functions of $H^1_0(B)$.
\\

The {\sl radial Morse index of $u_p$}, denoted by $m_{rad}(u_p)$,
is then the number of the negative radial eigenvalues $\beta_i(p)$ of $L_{p}$ counted according to their multiplicity.
It is well known (see for instance \cite{BartschWeth}) that for least energy nodal radial solutions $u_p$ to \eqref{problem} we have
\begin{equation}\label{LemmaMorseIndexRadiale}
m_{rad}(u_p)=2
\end{equation}
for any $p\in (1, p_S)$.

 \

\

As mentioned in the introduction, in order to compute the Morse index of $u_p$ we approximate the eigenvalue problem for $L_p$ with analogous problems in annuli. \\

Therefore we consider the annuli
\begin{equation}\label{def:anello}
A_n:=\{x\in \mathbb R^N\ :\ \frac{1}{n}<|x|<1 \}, \quad n\in \mathbb N^+,
\end{equation}
%
%
%
%
and denote by \[\mu_i^n(p), \quad i\in\N^+\] the Dirichlet eigenvalues of $L_p$ in $A_n$ counted according to their multiplicity.
%
%
Again they can be characterized as
\begin{eqnarray}\label{CourantCharEigenvN}
\mu_i^n(p) &=& \inf_{\substack{
V\subset H^1_{0}(A_n)\\ dim V=i}}   \max_{\substack{v\in V\\v\neq 0}}\ \ \
R_{p}^n[v]
\end{eqnarray}
where $R_{p}^n$ is the corresponding Rayleigh quotient
\begin{equation}\label{RayleighN}
R_p^n[v]:=\frac{Q_p^n(v)}{\int_{A_n} v(x)^2 dx}
\end{equation}
and $Q_p^n: H^1_0(A_n)\rightarrow \mathbb R$ is the associated quadratic form
\[Q_p^n(v):=\int_{A_n}\left(|\nabla v(x)|^2-p|u_p(x)|^{p-1}v(x)^2\right) dx.\]
Let us denote by $k_p^n$ the number of negative eigenvalues $\mu_i^n(p)$.\\

For a  radial solution $u_p$ to \eqref{problem} let us also set
%
by \[\beta_i^n(p), \quad i\in\N^+\] the radial Dirichlet eigenvalues of $L_p$ in $A_n$ counted with their multiplicity.
Again we have
\begin{eqnarray}\label{CourantCharEigenvNRad}
\beta_i^n (p)&=& \inf_{\substack{
V\subset H^1_{0,rad}(A_n)\\ dim V=i}}   \max_{\substack{v\in V\\v\neq 0}}\ \ \
R_{p}^n[v]
\end{eqnarray}
where $R_p^n$ is as in \eqref{RayleighN}.\\
Finally let $k_{p,rad}^n$ be the number of radial negative eigenvalues of $L_p$ in $A_n$.
\\
\\

It is easy to see, using the canonical embedding $H^1_0(A_n)\subset H^1_0(B)$  and the min-max characterizations \eqref{CourantCharEigenv}, \eqref{CourantCharEigenvN} and \eqref{CourantCharEigenvRad}, \eqref{CourantCharEigenvNRad}, that the following inequalities hold
\begin{equation}\label{ordinamento}
\mu_i^n(p)\geq \mu_i(p)\  \ \ \mbox{ and }\ \ \ \ \beta_i^n(p)\geq \beta_i(p)\ \ \ \ \forall\: i,n\in\mathbb N^+.
\end{equation}
Similarly we have
\begin{equation}\label{monotonia}
\mu_i^n(p)\geq \mu_i^{n+1}(p)\  \ \ \mbox{ and }\ \ \ \ \beta_i^n(p)\geq \beta_i^{n+1}(p)\ \ \ \ \forall \:i,n\in\mathbb N^+.
\end{equation}
By the continuity of the eigenvalues with respect to the domain we have the following:

\

\begin{lemma}\label{lemma:mu_in to mu_i} Let $p\in (1, p_S)$ be fixed. Then
\begin{equation*}\label{eq:convEigenvalues}
\mu_i^n(p) \searrow \mu_i(p)\ \  \ \mbox{ and } \ \ \ \beta_i^n(p) \searrow \beta_i(p)\ \mbox{ as }\ n\rightarrow +\infty\quad\forall\,i\in\N^+.
\end{equation*}
\end{lemma}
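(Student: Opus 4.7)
By \eqref{ordinamento} and \eqref{monotonia}, for each fixed $i$ the sequence $\left(\mu_i^n(p)\right)_n$ is non-increasing in $n$ and bounded below by $\mu_i(p)$, so it converges to some $\bar\mu_i(p)\geq \mu_i(p)$; the analogous statement holds for $\beta_i^n(p)$. The task therefore reduces to the reverse inequality, which I would obtain from the min-max formulas \eqref{CourantCharEigenvN} and \eqref{CourantCharEigenvNRad} by producing an $i$-dimensional test subspace $V_n\subset H^1_0(A_n)$ (respectively $V_n\subset H^1_{0,rad}(A_n)$) along which the Rayleigh quotient $R_p^n$ tends to $\mu_i(p)$ (respectively $\beta_i(p)$).

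The core technical step is the claim that every $\phi\in H^1_0(B)$ is the $H^1$-limit of a sequence $\phi_n\in H^1_0(A_n)$, and that the approximation preserves radial symmetry. I would introduce a radial cutoff $\eta_n$ with $\eta_n\equiv 0$ on $B_{1/n}$ and $\eta_n\equiv 1$ outside a slightly larger ball. For $N\geq 3$ the standard linear profile yields $\int_B|\nabla\eta_n|^2\,dx\leq Cn^{2-N}\to 0$, while for $N=2$, where the linear bump fails, I would use the logarithmic profile $\eta_n(r)=\log(nr)/\log\sqrt{n}$ on $1/n\leq r\leq 1/\sqrt{n}$, for which a direct computation in polar coordinates gives $\int_B|\nabla\eta_n|^2\,dx=O(1/\log n)\to 0$. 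This reflects the capacity-theoretic fact that $\{0\}$ has zero $H^1$-capacity in $\mathbb{R}^N$ for every $N\geq 2$. For $\psi\in C_c^\infty(B)$ the product $\eta_n\psi$ lies in $H^1_0(A_n)$, and combining the bound $\int\psi^2|\nabla\eta_n|^2\,dx\leq\|\psi\|_\infty^2\int|\nabla\eta_n|^2\,dx\to 0$ with dominated convergence one obtains $\eta_n\psi\to\psi$ in $H^1_0(B)$. A density and diagonal argument based on $C_c^\infty(B)\subset H^1_0(B)$ then delivers $\phi_n\in H^1_0(A_n)$ with $\phi_n\to\phi$ in $H^1_0(B)$; radial symmetry is preserved because $\eta_n$ itself is radial.

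With this in hand, let $\phi_1,\dots,\phi_i\in H^1_0(B)$ be $L^2$-orthonormal eigenfunctions of $L_p$ associated with $\mu_1(p),\dots,\mu_i(p)$, so that $V:=\mathrm{span}\{\phi_1,\dots,\phi_i\}$ has dimension $i$ and $\max_{v\in V\setminus\{0\}}R_p[v]=\mu_i(p)$. I would approximate each $\phi_j$ by some $\phi_j^n\in H^1_0(A_n)$ as above and set $V_n:=\mathrm{span}\{\phi_1^n,\dots,\phi_i^n\}$, which has dimension $i$ for $n$ large. Since $u_p\in C(\overline{B})$, the potential $p|u_p|^{p-1}$ lies in $L^\infty(B)$ and the Rayleigh quotient $R_p$ is continuous on $H^1_0(B)\setminus\{0\}$; because the $V_n$ converge to $V$ basis-wise and are finite-dimensional, the maximum of $R_p^n$ on $\{v\in V_n:\|v\|_{L^2}=1\}$ converges to $\mu_i(p)$. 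Invoking \eqref{CourantCharEigenvN} gives $\mu_i^n(p)\leq\max_{v\in V_n\setminus\{0\}}R_p^n[v]\to\mu_i(p)$, which combined with the monotone lower bound forces $\bar\mu_i(p)=\mu_i(p)$. The same argument with radial eigenfunctions and \eqref{CourantCharEigenvNRad} yields $\bar\beta_i(p)=\beta_i(p)$.

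I expect the main obstacle to be the dimension-two case of the cutoff construction: the usual linear profile used in $N\geq 3$ does not have vanishing Dirichlet energy, and one must instead work with the logarithmic profile, after which everything else reduces to a routine application of the min-max principle and continuity of the Rayleigh quotient.
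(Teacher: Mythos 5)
Your proposal is correct and follows essentially the same route as the paper: monotonicity from \eqref{ordinamento}--\eqref{monotonia} plus the min-max characterizations \eqref{CourantCharEigenvN}, \eqref{CourantCharEigenvNRad}, with the reverse inequality obtained by transplanting an (almost) optimal $i$-dimensional subspace of $H^1_0(B)$ into $H^1_0(A_n)$ via cutoffs vanishing near the origin. The only differences are in emphasis: you detail the zero-capacity cutoff construction (including the logarithmic profile for $N=2$) that the paper merely asserts, while the paper spells out, through a normalization--compactness argument, the convergence of the maxima over the finite-dimensional approximating subspaces, which you state as a continuity fact.
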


\

\begin{proof}
Though the proof relies on standard arguments we write it for the reader's convenience. Let us fix $i\in\mathbb N^+$ and, to shorten the notation, let us drop the dependence on $p$, so we write $
\mu_i^n:=\mu_i^n(p),$ $\mu_i:=\mu_i(p),$   $\beta_i^n:=\beta_i^n(p),$  $\beta_i:=\beta_i(p).$
Moreover for any function $g\in H^1_0(A_n)$ we still denote by $g$ its extension to the whole ball $B$ which is equal to zero in $B\setminus A_n$.\\
By \eqref{ordinamento} it is enough to prove the following
\begin{equation}\label{mainClaim}
\hspace{-0.5pt}\mbox{\textit{Claim}.}\quad
\mbox{For any }\varepsilon >0\mbox{ there exists  }n_{\varepsilon}\in\N^+\mbox{ such that }
\mu_i^n\leq \mu_i +\varepsilon, \mbox{ for }n\geq n_{\varepsilon}
\end{equation}
Let $\varepsilon>0$ be fixed.  Then by the min-max characterization of $\mu_i$ there exists $W_{\varepsilon}\subset H^1_0(B)$, $\dim W_{\varepsilon}=i$ such that
\begin{equation}\label{DefDiInf}
\max_{\substack{w\in W_{\varepsilon}\\w\neq 0}}\
R_{p}[w]< \mu_i +\frac{\varepsilon}{2}
\end{equation}
Let us denote by $w_j^{\varepsilon}$, $j=1,\ldots, i$ an orthogonal basis of $W_{\varepsilon}$, hence $W_{\varepsilon}=span\{w_1^{\varepsilon}, w_2^{\varepsilon},\ldots, w_i^{\varepsilon}\}$ and without loss of generality assume that
$\int_B w_j^{\varepsilon}(x)^2 dx=1$, for any $j=1,\ldots, i$.\\
We point out that for any function $g\in H^1_0(B)$ there exists a sequence $g_n$ compactly supported in $B\setminus\{0\}$ such that $g_n\to g$ in $H^1_0(B)$. It is obviously possible to choose $g_n$ with its support in $A_n$.
Hence there exist sequences $\left(v_{n,j}^{\varepsilon}\right)_n\in H^1_0(A_n)$ such that $v_{n,j}^{\varepsilon}\rightarrow w_j^{\varepsilon}$, for any $j\in\{1,\ldots,i\}$ in $H^1_0(B)$ as $n\rightarrow +\infty$ (extension to zero), $j=1,\ldots, i$.
\\For $n$ large the space $V^{\varepsilon}_n\subset H^1_0(A_n)$,
defined by  \[V^{\varepsilon}_n:=span\{v_{n,1}^{\varepsilon},v_{n,2}^{\varepsilon},\ldots, v_{n,i}^{\varepsilon}\}\]
satisfies
$dim V^{\varepsilon}_n=i$. Indeed if by contradiction there exist $t_{n,j}\in\R$ such that
\[
\sum_{j=1}^i t_{n,j} v_{n,j}^\varepsilon=0\qquad\textnormal{and}\qquad (t_{n,1},\ldots,t_{n,i})\neq(0,\ldots,0)
\]
then also
\begin{equation}\label{comb lin v_nj^eps}\sum_{j=1}^i \frac{t_{n,j}}{\max_j\{|t_{n,j}|\}} v_{n,j}^\varepsilon=0,
\end{equation} but, being bounded, $\frac{t_{n,j}}{\max_j\{|t_{n,j}|\}}\to t_j$, up to subsequences, as $n\to+\infty$ $j=1,\ldots, i$ and it is not difficult to see that, up to a subsequence, there exists $\ell\in\{1,\ldots,i\}$ such that $|t_\ell|=1$. Passing to the limit in \eqref{comb lin v_nj^eps} we get then $\sum_{j=1}^i t_j w_j^\varepsilon=0$ with $|t_\ell|=1$, which is in contradiction with $dim W_{{\varepsilon}}=i$.
\\
We now show the existence of $n_{\varepsilon}\in\N^+$ such that
\begin{equation}\label{miniClaim}
\max_{\substack{v\in V_n^{\varepsilon}\\v\neq 0}}\
R_{p}^n[v]\leq \max_{\substack{w\in W_{\varepsilon}\\w\neq 0}}\
R_{p}[w]+\frac{\varepsilon}{2},\ \ \ \ \mbox{ for } n\geq n_{\varepsilon}
\end{equation}
Since $\mu^n_i\leq\max_{\substack{v\in V_n^{\varepsilon}\\v\neq 0}}R_{p}^n[v]$, \eqref{miniClaim} together with \eqref{DefDiInf} proves \textit{Claim} \eqref{mainClaim} and so the assertion.\\
In order to prove \eqref{miniClaim} we argue by contradiction.
Hence let us assume that there exists a subsequence $n_k\rightarrow +\infty$ such that
\begin{equation}\label{NegazioneMiniClaim}
\max_{\substack{v\in V_{n_k}^{\varepsilon}\\v\neq 0}}\
R_{p}^{n_k}[v]> \max_{\substack{w\in W_{\varepsilon}\\w\neq 0}}\
R_{p}[w]+\frac{\varepsilon}{2},\ \ \ \ \mbox{ for any } k
\end{equation}
Let $\widetilde v_k^{\varepsilon}\in V_{n_k}^{\varepsilon}$, $\widetilde v_k^{\varepsilon}\neq 0$
such that
\[R_p^{n_k}[\widetilde v_k^{\varepsilon}]= \max_{\substack{v\in V_{n_k}^{\varepsilon}\\v\neq 0}}\
R_{p}^{n_k}[v].\]
Since the Rayleigh quotient is $0$-homogeneous  we can assume without loss of generality that
\begin{equation}\label{normalizzo}
\int_{A_{n_k}}\widetilde v_k^{\varepsilon}(x)^2dx=1.
\end{equation}
By definition of the space $V_{n_k}^{\varepsilon}$ there exists $(t_{k,1}^{\varepsilon}, t_{k,2}^{\varepsilon},\ldots, t_{k,i}^{\varepsilon})\in \mathbb R^i$ such that
\[\widetilde v_k^{\varepsilon}=t_{k,1}^{\varepsilon}v_{{n_k},1}^{\varepsilon}\ +\ t_{k,2}^{\varepsilon}v_{{n_k},2}^{\varepsilon}\ + \ \ldots\ +\ t_{k,i}^{\varepsilon}v_{{n_k},i}^{\varepsilon}.\]
Now recalling that each sequence  $v_{{n_k},j}^{\varepsilon}\rightarrow w^{\varepsilon}_j$ in $H^1_0(B)$ as $k\rightarrow + \infty$ for $j=1,\ldots, i$ and that the $w_j^\varepsilon$, $j=1,\ldots,i$, form an orthogonal basis verifying $\|w_j^\varepsilon\|_{L^2(B)}=1$ we deduce that the  sequences $\left(t_{k,j}^{\varepsilon}\right)_k$, $j=1,\ldots, i$ are bounded, being
\begin{eqnarray*}
1&\overset{\eqref{normalizzo}}{=}&\int_{A_{n_k}}\widetilde v_k^{\varepsilon}(x)^2 dx=\sum_{j=1}^i\ \left(t_{k,j}^{\varepsilon}\right)^2\ \int_{A_{n_k}}  v_{{n_k},j}^{\varepsilon}(x)^2 dx\ +o_k(1)\sum_{\underset{j\neq\ell}{j,\ell=1
}}^it_{k,j}^{\varepsilon}\,t_{k,\ell}^{\varepsilon}\\
&=& \ \sum_{j=1}^i\ \left(t_{k,j}^{\varepsilon}\right)^2 \ + \ o_k(1) \ + \ o_k(1)\sum_{\underset{j\neq\ell}{j,\ell=1
}}^it_{k,j}^{\varepsilon}\,t_{k,\ell}^{\varepsilon},
\end{eqnarray*}
then
\[
\sum_{j=1}^i\ \left(t_{k,j}^{\varepsilon}\right)^2\leq 1 \ + \ o_k(1) \ + \ o_k(1)\sum_{j=1}^i\ \left(t_{k,j}^{\varepsilon}\right)^2.
\]
So there exists $t_j^{\varepsilon}\in\mathbb R$ such that up to a subsequence $t_{k,j}^{\varepsilon}\rightarrow t_j^{\varepsilon}\in\mathbb R$, $j=1,\ldots, i$.\\
As  a consequence, passing to  a subsequence, that we continue to  denote by $\left(\widetilde v_k^{\varepsilon}\right)_k$, we get
\[\widetilde v_k^{\varepsilon}\rightarrow w_{\varepsilon}:=t_1^{\varepsilon}w^{\varepsilon}_1\ +\ t_2^{\varepsilon}w^{\varepsilon}_2\ + \ \ldots \ +\ t_i^{\varepsilon}w^{\varepsilon}_i\ \ \ \mbox{ in }\  H^1_{0}(B)\ \mbox{ as }\ k\rightarrow +\infty.\]
Clearly the limit $w_{\varepsilon}\in W_{\varepsilon}$ and moreover
$R_p^{n_k}[\widetilde v_k^{\varepsilon}]=R_p[\widetilde v_k^{\varepsilon}]\rightarrow R_p[w_{\varepsilon}]$  as $k\rightarrow +\infty$.
Passing to the limit in \eqref{NegazioneMiniClaim} as  $k\rightarrow +\infty$ it follows that
\[R_p[w_{\varepsilon}]\geq\max_{\substack{w\in W_{\varepsilon}\\w\neq 0}}\
R_{p}[w]+\frac{\varepsilon}{2},\]
which is a contradiction.\\
In the same way the assertion on the convergence of the radial eigenvalues can be proved.
\end{proof}

\

By Lemma \ref{lemma:mu_in to mu_i} and \eqref{ordinamento} it follows that the number of negative eigenvalues (resp. negative radial eigenvalues) of the linearized operator $L_p$ in $B$  coincides with the number $k_p^n$  (resp. $k_{p,rad}^n$) of negative eigenvalues (resp. negative radial eigenvalues) of $L_p$ in $A_n$, for $n$ large:

\

\begin{lemma} \label{lemma:morseProblemiSenzaPesoAnello}
Let $p\in (1, p_S)$ and let $u_p$ be a solution to \eqref{problem}. Then there exists $ n'_p\in\N^+$ such that:\\
a)  $m(u_p)=k_p^n$ and, if $u_p$ is radial, also $m_{rad}(u_p)= k_{p,rad}^n$ for $n\geq  n'_p$.\\
b) In particular if $u_p$ is the least energy nodal radial solution to \eqref{problem} then by \eqref{LemmaMorseIndexRadiale} it follows that \[k_{p,rad}^n=2\ \mbox{ for }\ n\geq  n'_p.\]
\end{lemma}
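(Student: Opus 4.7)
The statement is a direct consequence of Lemma \ref{lemma:mu_in to mu_i} together with the fact that the spectrum of $L_p$ in $B$ is discrete and bounded below, so that only finitely many eigenvalues are negative. The plan is to read off from the monotone convergence $\mu_i^n(p)\searrow\mu_i(p)$ that the negative eigenvalues of $L_p$ in $A_n$ stabilize in number for $n$ large, and that this number matches $m(u_p)$.

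\textbf{Step 1.} Set $m:=m(u_p)$. By definition of the Morse index and the fact that $\mu_i(p)\to+\infty$, we have
\[
\mu_1(p)\le\cdots\le\mu_m(p)<0\le\mu_{m+1}(p)\le\mu_{m+2}(p)\le\cdots
\]
(with the convention that the first inequality is void if $m=0$). By Lemma \ref{lemma:mu_in to mu_i} applied to the index $i=m$, there exists $n_1\in\N^+$ such that $\mu_m^n(p)<0$ for every $n\ge n_1$, because $\mu_m^n(p)\to\mu_m(p)<0$. On the other hand, by \eqref{ordinamento},
\[
\mu_{m+1}^n(p)\ge \mu_{m+1}(p)\ge 0\qquad\text{for every }n\in\N^+.
\]
Combined with $\mu_1^n(p)\le\cdots\le\mu_m^n(p)<0$, this shows that for $n\ge n_1$ the operator $L_p$ in $A_n$ has exactly $m$ strictly negative Dirichlet eigenvalues, i.e. $k_p^n=m=m(u_p)$.

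\textbf{Step 2.} If $u_p$ is radial, the same argument, carried out on the radial eigenvalues $\beta_i^n(p)$ and $\beta_i(p)$ via the analogous monotone convergence in Lemma \ref{lemma:mu_in to mu_i} and the analogous inequality in \eqref{ordinamento}, yields $n_2\in\N^+$ such that $k_{p,\mathrm{rad}}^n=m_{\mathrm{rad}}(u_p)$ for every $n\ge n_2$. Taking $n'_p:=\max\{n_1,n_2\}$ proves part a). Part b) follows immediately by combining part a) with \eqref{LemmaMorseIndexRadiale}, which gives $m_{\mathrm{rad}}(u_p)=2$ for least energy nodal radial solutions.

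\textbf{Main point.} There is essentially no obstacle here: all the analytic work is done by Lemma \ref{lemma:mu_in to mu_i}. The only thing to check is that one can simultaneously control the eigenvalue immediately below zero (which converges to something negative, hence is eventually negative) and the eigenvalue immediately above zero (which lies above a nonnegative limit, hence is nonnegative for free from \eqref{ordinamento}); this is precisely what the strict inequality $\mu_m(p)<0$ combined with monotone convergence from above provides.
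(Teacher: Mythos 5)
Your proposal is correct and follows exactly the route the paper intends: the paper states this lemma as an immediate consequence of Lemma \ref{lemma:mu_in to mu_i} together with the inequalities \eqref{ordinamento}, which is precisely your argument (monotone convergence from above forces $\mu_m^n(p)<0$ for $n$ large, while $\mu_{m+1}^n(p)\geq\mu_{m+1}(p)\geq0$ rules out extra negative eigenvalues, and likewise for the radial eigenvalues). No gaps; part b) is indeed just a) combined with \eqref{LemmaMorseIndexRadiale}.
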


\

\

\

\section{Auxiliary weighted eigenvalue problems in annuli}\label{section:auxiliaryweightedoperator}

\

For a radial solution $u_p$ to \eqref{problem}, we consider the following linear operator $\widetilde{L_{p}^n}: H^2(A_n)\cap H^1_0(A_n)\rightarrow L^2(A_n)$:
\begin{equation}
\label{weightedOp}
\widetilde{L_{p}^n} v: = |x|^2 \left(  -\Delta v-p|u_p(x)|^{p-1}v \right), \ \ x\in A_n,
\end{equation}
where $A_n$ are the annuli in \eqref{def:anello} and let us denote by
\[\widetilde{\mu_i^n}(p), \quad i\in\N^+\]
its eigenvalues  counted with their multiplicity. The corresponding eigenfunctions $h_{i,p}^n$ satisfy
\begin{equation}\label{problAutovLpTilde}\left\{
\begin{array}{lr}
-\Delta h_{i,p}^n(x)-p|u_p(x)|^{p-1}h_{i,p}^n(x)=\widetilde{\mu_i^n}(p) \frac{h_{i,p}^n(x)}{|x|^2} \ \ \ \ x\in A_n
\\
\\
h_{i,p}^n=0 \ \ \ \ \mbox{on } \partial A_n
\end{array}
\right.
\end{equation}
Since the singularity $x=0$ does not belong to the annulus $A_n$, the eigenvalues $\widetilde\mu_i^n(p)$ can be characterized as
\begin{equation}\label{CaratVariazAutov}
\widetilde{\mu_i^n}(p)=\inf_{\substack{W\subset H^1_0(A_n)\\dim W=i}}\max_{
\substack{
v\in W\\ v\neq 0}}\frac{\int_{A_n}\left(|\nabla v(x)|^2-p|u_p(x)|^{p-1}v(x)^2\right) dx}{\int_{A_n} \frac{v(x)^2}{|x|^2}dx}
\end{equation}
Let  $\widetilde{ k_{p, }^n}$ be the number of the negative eigenvalues of the operator $
\widetilde{L^n_{p}}$, counted with their multiplicity.\\

Furthermore, since $u_p$ is radial we consider the following linear operator with weight
$\widetilde{L_{p,}^n}_{rad}: H^2((\frac{1}{n},1))\cap H^1_0((\frac{1}{n},1))\rightarrow L^2((\frac{1}{n},1))$
\[\widetilde{ L_{p,}^n}_{rad} v: = r^2\left(  -v''-\frac{(N-1)}{r}v'-p|u_p(r)|^{p-1}v\right), \ \ \ \ r\in (\frac{1}{n},1)\]
and denote by \[\widetilde{\beta_i^n}(p), \quad i\in\N^+\] its eigenvalues  counted with their multiplicity.
Clearly $\widetilde{\beta_i^n}(p)$ is an eigenvalue of $\widetilde{ L_{p,}^n}_{rad}$ if and only if it is a radial eigenvalue of $\widetilde{ L_{p,}^n}$ (i.e. an eigenvalue associated with radial eigenfunctions) and so the following characterization holds true
\begin{eqnarray}\label{defbetatilde1n}
\widetilde{\beta_i^n}(p)&=&
\inf_{\substack{
V\subset H^1_{0,rad}(A_n)\\ dim V=i}}   \max_{\substack{v\in V\\v\neq 0}}\ \ \
\frac{\int_{A_n}\left(|\nabla v(x)|^2-p|u_p(x)|^{p-1}v(x)^2\right) dx}{\int_{A_n} \frac{v(x)^2}{|x|^2}dx}.
%
\end{eqnarray}
Finally by  $\widetilde{ k_{p, }^n}_{rad}$ we mean  the number of negative eigenvalues of the operator $
\widetilde{L^n_{p}}_{rad}$.

\

Denoting by $\sigma(\cdot)$ the spectrum of a linear operator we have the following decomposition result:

\

\begin{lemma}\label{lemma:decompositionOfTheSpectrum}
Let $p\in (1,p_S)$ and  $u_p$ be a radial solution to \eqref{problem}. Then
for any $n\in\mathbb N^+$
\begin{equation}
\sigma(\widetilde{L_p^{n}})
=\sigma(\widetilde{L_{p,}^{n}}_{rad})+\sigma(-\Delta_{S^{N-1}})
\end{equation}
where $\Delta_{S^{N-1}}$ is the Laplace-Beltrami operator on the unit sphere $S^{N-1}$, $N\geq 2$.
\end{lemma}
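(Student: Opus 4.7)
The plan is to use separation of variables in spherical coordinates, exploiting that the potential $V_p(x)=p|u_p(x)|^{p-1}$ is radial so that $\widetilde{L_p^n}$ preserves the angular decomposition. Let $\{Y_k\}_{k\in\N}$ denote an $L^2(S^{N-1})$-orthonormal basis of spherical harmonics with $-\Delta_{S^{N-1}}Y_k=\lambda_k Y_k$, where the $\lambda_k$'s are the eigenvalues of $-\Delta_{S^{N-1}}$ repeated according to multiplicity (so the multiset $\{\lambda_k\}$ is precisely $\sigma(-\Delta_{S^{N-1}})$). Any $v\in H^1_0(A_n)$ has a Fourier expansion $v(r,\theta)=\sum_k \phi_k(r)Y_k(\theta)$ with $\phi_k\in H^1_{0,rad}(A_n)$, convergent in $H^1_0(A_n)$, and because $A_n$ does not touch the origin this expansion behaves well in the weighted $L^2(A_n,|x|^{-2}dx)$-space too.

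Using the spherical-coordinate formula
\begin{equation*}
-\Delta v=-\partial_r^2 v-\frac{N-1}{r}\partial_r v-\frac{1}{r^2}\Delta_{S^{N-1}}v,
\end{equation*}
I would plug $v=\phi(r)Y_k(\theta)$ into the eigenvalue equation $\widetilde{L_p^n}v=\widetilde{\mu}\,v$, which unfolds (after dividing by $Y_k$ and multiplying by $r^2$, and using $-\Delta_{S^{N-1}}Y_k=\lambda_k Y_k$) to
\begin{equation*}
r^2\!\left(-\phi''-\tfrac{N-1}{r}\phi'-p|u_p(r)|^{p-1}\phi\right)+\lambda_k\phi=\widetilde{\mu}\,\phi,
\end{equation*}
i.e.\ $\widetilde{L_{p,rad}^n}\phi=(\widetilde{\mu}-\lambda_k)\phi$. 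Hence a pair $(\phi,Y_k)$ produces an eigenfunction of $\widetilde{L_p^n}$ with eigenvalue $\widetilde{\mu}=\widetilde{\beta_i^n}(p)+\lambda_k$ whenever $\widetilde{\beta_i^n}(p)\in\sigma(\widetilde{L_{p,rad}^n}_{rad})$. This proves the inclusion $\sigma(\widetilde{L_{p,rad}^n}_{rad})+\sigma(-\Delta_{S^{N-1}})\subseteq\sigma(\widetilde{L_p^n})$.

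For the reverse inclusion, I would argue that the spherical-harmonic decomposition diagonalizes $\widetilde{L_p^n}$ blockwise: writing $H^1_0(A_n)=\bigoplus_k\mathcal{H}_k$ where $\mathcal{H}_k:=\{\phi(r)Y_k(\theta):\phi\in H^1_{0,rad}(A_n)\}$, each subspace $\mathcal{H}_k$ is invariant under $\widetilde{L_p^n}$ (since the potential is radial), and the restriction of $\widetilde{L_p^n}$ to $\mathcal{H}_k$ is unitarily equivalent to $\widetilde{L_{p,rad}^n}_{rad}+\lambda_k\Id$ acting on $H^1_{0,rad}(A_n)$ endowed with the weight $|x|^{-2}$. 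Therefore the total spectrum of $\widetilde{L_p^n}$ is the union over $k$ of $\sigma(\widetilde{L_{p,rad}^n}_{rad})+\lambda_k$, which is exactly $\sigma(\widetilde{L_{p,rad}^n}_{rad})+\sigma(-\Delta_{S^{N-1}})$.

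The only genuinely delicate point, which I would treat with care, is justifying that no eigenvalues are lost in passing to the Fourier decomposition: one needs that the bilinear form associated with $\widetilde{L_p^n}$ (computed against the weight $|x|^{-2}$, cf.\ \eqref{CaratVariazAutov}) is compatible with the orthogonal decomposition of $H^1_0(A_n)$ into the $\mathcal H_k$'s. Since the singularity $\{0\}$ is excluded from $A_n$, the weight $|x|^{-2}$ is bounded on $A_n$, so standard spectral theory of self-adjoint operators with compact resolvent applies in each block, and assembling the blocks yields the full spectrum with multiplicities. This rigorously gives the claimed equality.
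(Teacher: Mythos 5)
Your argument is correct, and it is essentially the standard separation-of-variables proof that the paper itself does not write out but delegates to the cited references \cite{GladialiGrossiPacellaSrikanth} and \cite{BartschClappGrossiPacella}, where precisely this spherical-harmonic block decomposition (made harmless by the fact that $A_n$ avoids the origin, so the weight $|x|^{-2}$ is bounded and bounded away from zero) is used. So you have reconstructed the intended proof rather than found a different route.
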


\begin{proof}
The proof is not difficult, we refer to  \cite{GladialiGrossiPacellaSrikanth} or \cite{BartschClappGrossiPacella}.
\end{proof}

\

By Lemma \ref{lemma:decompositionOfTheSpectrum} we then have that, for any $n\in\mathbb N^+$, the eigenvalues $\widetilde{\mu_j^n}(p)$ of $\widetilde{L_p^{n}}$ are given by
\begin{equation}\label{decomposizioneAutovalori} \widetilde{\mu_j^n}(p)\ =\ \widetilde{\beta_i^n}(p)\ +\ \lambda_k, \ \ \mbox{ for } i,j=1,2,\ldots,\ \ k=0,1, \ldots
\end{equation}
where  $\widetilde{\beta_i^n}(p)$, $i=1,2,\ldots$ are the eigenvalues of the radial operator $\widetilde{L_p^{n}}_{rad}$ and $\lambda_k$, $k=0,1,\ldots$ are the eigenvalues of the Laplace-Beltrami operator $-\Delta_{S^{N-1}}$ on the unit sphere $S^{N-1}$, $N\geq 2$. It is known (\cite[Proposition 4.1]{BerezinShubin}) that
\begin{equation}\label{lambdak}
\lambda_k=k(k+N-2),\ \ k=0,1, \ldots
\end{equation}
with multiplicity
\begin{equation}\label{multiplicity}
N_k-N_{k-2}
\end{equation}
where
\[N_h:=\binom{N-1+h}{N-1}=\frac{(N-1+h)!}{(N-1)!h!},\ \mbox{ if }h\geq 0, \ \ \ N_h=0,\ \mbox{ if }h< 0. \]
It is important to note that in the previous decomposition only the eigenvalues $\widetilde{\beta_i^n}(p)$
depend on the exponent
$p$ while the eigenvalues $\lambda_k$
depend only on the dimension
N.

\

Recall that by the approximation results in Section \ref{section:linearizedOperator} we know that
$m(u_p)=k_p^n$ and $m_{rad}(u_p)= k_{p,rad}^n=2$ for $n$ large, where
$k_p^n$ and  $k_{p,rad}^n$ are, respectively,  the number of  negative eigenvalues  and the number of negative radial eigenvalues of the linearized operator $L_p$ in the annulus $A_n$.
\\

Next result establishes an important equivalence between $k_p^n$ and $k_{p,rad}^n=2$ and the number of negative eigenvalues of the auxiliary weighted operators $\widetilde{L_p^n}$ and $\widetilde{ L_{p,}^n}_{rad}$ that we have introduced in this section:

\

\begin{lemma} \label{lemmaEquivTraPesoESenzaPeso} Let $N\geq 2$, $p\in (1,p_S)$ and $u_p$ be a solution to \eqref{problem}. Then \\
a) the number $ k_{p}^n$ of negative eigenvalues $\mu_i^n(p)$ of $L_p$ in $A_n$ coincides with the number $\widetilde{ k_{p }^n}$ of negative eigenvalues $\widetilde{\mu_i^n}(p)$ of $\widetilde{L_p^n}$;\\
b) if $u_p$ is radial, then the number $k_{p,rad }^n$ of negative radial eigenvalues $\beta_i^n(p)$ of $L_{p}$ in $A_n$ coincides with the number $\widetilde{ k_{p, }^n}_{rad}$ of negative eigenvalues $\widetilde{\beta_i^n}(p)$ of $\widetilde{L_{p,}^n}_{rad}$.
 \end{lemma}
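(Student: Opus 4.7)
The key observation is that the quadratic form appearing in the numerator of the Rayleigh quotient is \emph{the same} for the unweighted and the weighted eigenvalue problems: in both cases it is
\[
Q_p^n(v)=\int_{A_n}\bigl(|\nabla v|^2-p|u_p|^{p-1}v^2\bigr)\,dx.
\]
Only the denominators differ, being $\int_{A_n} v^2\,dx$ for $\mu_i^n(p)$ and $\int_{A_n} v^2/|x|^2\,dx$ for $\widetilde{\mu_i^n}(p)$. Since $0\notin A_n$, the function $|x|^{-2}$ is bounded and strictly positive on $A_n$, so both integrals define positive quantities on $H^1_0(A_n)\setminus\{0\}$ (and on $H^1_{0,rad}(A_n)\setminus\{0\}$). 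Hence for every $v\neq 0$,
\[
\frac{Q_p^n(v)}{\int_{A_n} v^2\,dx}<0 \iff Q_p^n(v)<0 \iff \frac{Q_p^n(v)}{\int_{A_n} v^2/|x|^2\,dx}<0.
\]

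The plan is then to exploit the min-max characterizations \eqref{CourantCharEigenvN} and \eqref{CaratVariazAutov} (respectively \eqref{CourantCharEigenvNRad} and \eqref{defbetatilde1n} in the radial case). I would argue that the number of negative eigenvalues of each operator equals the maximal dimension of a subspace of the appropriate function space on which $Q_p^n$ is negative definite. For the unweighted operator this is the standard fact (valid on a bounded domain) that the Morse index coincides with the count of negative eigenvalues of the self-adjoint operator $L_p$ and with $\max\{\dim V : Q_p^n|_{V\setminus\{0\}}<0\}$. For the weighted operator the same characterization follows from \eqref{CaratVariazAutov}: $\widetilde{\mu_k^n}(p)<0$ iff there exists a $k$-dimensional subspace $W$ such that $\max_{v\in W\setminus\{0\}} Q_p^n(v)/\int_{A_n} v^2|x|^{-2}dx<0$, and by the positivity of the weight this is equivalent to $Q_p^n(v)<0$ on $W\setminus\{0\}$.

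Concretely, to show $k_p^n\le\widetilde{k_{p}^n}$ (part (a)), I would take $V\subset H^1_0(A_n)$ of dimension $k_p^n$ on which $Q_p^n$ is negative definite (which exists since $\mu_{k_p^n}^n(p)<0$ by definition of $k_p^n$); on the finite-dimensional set $\{v\in V: \int_{A_n} v^2/|x|^2\,dx=1\}$ the continuous functional $Q_p^n$ attains its maximum, and this maximum is strictly negative. Inserting $V$ into \eqref{CaratVariazAutov} with $i=k_p^n$ yields $\widetilde{\mu_{k_p^n}^n}(p)<0$, hence $\widetilde{k_p^n}\ge k_p^n$. The reverse inequality is obtained symmetrically by taking a subspace realizing $\widetilde{k_p^n}$ and using \eqref{CourantCharEigenvN}. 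Part (b) follows verbatim from the same argument, with $H^1_0(A_n)$ replaced by $H^1_{0,rad}(A_n)$ throughout, and using \eqref{CourantCharEigenvNRad}, \eqref{defbetatilde1n}.

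\textbf{Main difficulty.} There is essentially no obstacle here: the lemma is a soft consequence of the fact that in a weighted eigenvalue problem with a positive bounded weight, the quadratic form alone determines the number of negative eigenvalues. The only point requiring care is verifying that the weight $|x|^{-2}$ is indeed bounded and bounded away from zero on the annulus $A_n=\{1/n<|x|<1\}$, which rules out any subtlety at the origin and legitimates the equivalence of the sign conditions described above.
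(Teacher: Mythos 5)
Your proposal is correct and follows essentially the same route as the paper: both rely on the fact that the two eigenvalue problems share the quadratic form $Q_p^n$, that the weight $|x|^{-2}$ is positive and bounded on $A_n$, and on the min-max characterizations \eqref{CourantCharEigenvN}--\eqref{CaratVariazAutov} (the paper merely phrases one direction via the eigenfunctions of $\widetilde{L_p^n}$ and the other by contradiction, which is the same argument as your two symmetric inequalities).
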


 \

\begin{proof}
The proof of part \emph{a)} is the same as in \cite[Lemma 2.1]{GladialiGrossiPacellaSrikanth} and we repeat it below for completeness, the proof of part \emph{b)} follows similarly, restricting to radial functions.\\
\\
{\sl Step 1.  We show that $k_{p}^n\geq \widetilde{ k_{p }^n}$.}\\
Let $h$ be an eigenfunction for the operator $\widetilde{L_p^n}$ corresponding to a negative eigenvalue $\widetilde{\mu^n}(p) <0$:
\begin{equation}\label{eqAutofunz}
\left\{
\begin{array}{lr}
-\Delta h(x)-p|u_p(x)|^{p-1}h(x)=\widetilde{\mu^n}(p) \frac{h(x)}{|x|^2} \ \ \ \ x\in A_n
\\
h=0 \ \ \ \ \mbox{on } \partial A_n
\end{array}
\right.
\end{equation}
Multiplying \eqref{eqAutofunz} by $h$ and integrating over $A_n$ we get
\[Q_p^n(h)=\int_{A_n} \left[|\nabla h(x)|^2-p|u_p(x)|^{p-1}h(x)^2\right]dx=\widetilde{\mu^n}(p) \int_{A_n}\frac{h(x)^2}{|x|^2}dx <0 \]
namely $h$ makes the quadratic form $Q_p^n$ negative. The conclusion follows from the fact that the set of all these eigenfunctions is a space of dimension $ \widetilde{ k_{p }^n}$.\\
\\
{\sl Step 2. We show that $k_{p}^n\leq \widetilde{ k_{p }^n}$. }\\
Let us assume by contradiction that $k_{p}^n > \widetilde{ k_{p }^n}$ and let $W$ be the $k_p^n$-dimensional space spanned by the orthogonal  eigenfunctions $\varphi_i$ associated to the negative Dirichlet eigenvalues of $L_p$ in $A_n$
\[W:=span\{\varphi_1,\varphi_2,\ldots,\varphi_{k_p^n}\}\ \subset H^1_0(A_n).\]
By the variational characterization \eqref{CaratVariazAutov} of the eigenvalues of $\widetilde{L_p^n}$ we would have
\begin{equation}
\widetilde{\mu_{k_p^n}^n}(p)\leq\max_{
\substack{
v\in W\\ v\neq 0}}\frac{\int_{A_n}\left(|\nabla v(x)|^2-p|u_p(x)|^{p-1}v(x)^2\right) dx}{\int_{A_n} \frac{v(x)^2}{|x|^2}dx}<0,
\end{equation}
 reaching a contradiction.
\end{proof}

\

Combining the previous result with the approximation done in Section \ref{section:linearizedOperator} we get:

\begin{proposition}\label{proposition:MorseRadialeConPeso}
Let $N\geq 2$,  $p\in (1, p_S)$ and $u_p$ be a solution to \eqref{problem}. Then there exists $n'_p\in\N^+$ such that:\\
a) the Morse index $m(u_p)$
 of $u_p$ coincides with the number $\widetilde{k_p^n}$ of negative eigenvalues $\widetilde{\mu_i^n}(p)$ (counted with their multiplicity)   of $\widetilde{L_p^n}$ for $n\geq  n'_p$;\\
b) if $u_p$ is radial, the radial Morse index $m_{rad}(u_p)$
 of $u_p$ coincides with the number $\widetilde{ k_{p, }^n}_{rad}$ of negative eigenvalues $\widetilde{\beta_i^n}(p)$ (counted with their multiplicity) of $\widetilde{L_{p,}^n}_{rad}$ for $n\geq  n'_p$.
\end{proposition}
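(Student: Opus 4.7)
The plan is extremely short: the proposition is a direct combination of Lemma \ref{lemma:morseProblemiSenzaPesoAnello} and Lemma \ref{lemmaEquivTraPesoESenzaPeso}, so essentially no new work is required beyond chaining the two identities.

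Specifically, I would fix $n'_p \in \mathbb N^+$ as in Lemma \ref{lemma:morseProblemiSenzaPesoAnello}. For any $n \geq n'_p$, part (a) of that lemma gives $m(u_p) = k_p^n$, i.e.\ the Morse index of $u_p$ in the whole ball equals the number of negative Dirichlet eigenvalues of the unweighted operator $L_p$ in the annulus $A_n$. Then part (a) of Lemma \ref{lemmaEquivTraPesoESenzaPeso} identifies $k_p^n$ with $\widetilde{k_p^n}$, the number of negative eigenvalues of the weighted operator $\widetilde{L_p^n}$. Concatenating gives $m(u_p) = \widetilde{k_p^n}$ for all $n \geq n'_p$, proving (a).

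For (b), assuming $u_p$ is radial, the same $n'_p$ works: Lemma \ref{lemma:morseProblemiSenzaPesoAnello}(a) yields $m_{rad}(u_p) = k_{p,rad}^n$, and Lemma \ref{lemmaEquivTraPesoESenzaPeso}(b) yields $k_{p,rad}^n = \widetilde{k_{p,}^n}_{rad}$. Combining these gives the claim. There is no real obstacle: the two preparatory lemmas have already done all the work (the approximation by annuli in Section \ref{section:linearizedOperator} and the equivalence between weighted and unweighted spectra via the Rayleigh quotient argument in Lemma \ref{lemmaEquivTraPesoESenzaPeso}), and the proposition is just the statement one obtains by reading them in sequence. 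One might add a one-line remark that the threshold index $n'_p$ is the one already produced in Lemma \ref{lemma:morseProblemiSenzaPesoAnello}, since Lemma \ref{lemmaEquivTraPesoESenzaPeso} holds for every $n \in \mathbb N^+$ and imposes no further restriction.
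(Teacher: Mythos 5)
Your proposal is correct and coincides with the paper's own proof: it fixes $n'_p$ as in Lemma \ref{lemma:morseProblemiSenzaPesoAnello} and chains its conclusions with the equivalences of Lemma \ref{lemmaEquivTraPesoESenzaPeso}, which indeed hold for every $n\in\N^+$. Nothing further is needed.
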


\

\begin{proof}
It follows with $n_p'\in\N^+$ as in Lemma \ref{lemma:morseProblemiSenzaPesoAnello}, combining the results in Lemma \ref{lemma:morseProblemiSenzaPesoAnello} and Lemma \ref{lemmaEquivTraPesoESenzaPeso}.
\end{proof}

\

\begin{corollary}\label{Cor:hpmorsetradotte}
Let $N\geq 2$, $p\in (1,p_S)$ and $u_p$ be the least energy sign-changing radial solution to \eqref{problem}. Then there exists $ n'_p\in\N^+$ such that:\\
a) $\widetilde{k_p^n}\geq N+2$, for $n\geq  n'_p$;
\\
b) $\widetilde{ k_{p, }^n}_{rad}=2$, for $n\geq  n'_p$.
\end{corollary}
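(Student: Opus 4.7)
The plan is to show that this corollary is essentially a direct translation via Proposition \ref{proposition:MorseRadialeConPeso}, which already identifies the Morse index (respectively radial Morse index) of $u_p$ with the number of negative eigenvalues of the weighted operator $\widetilde{L_p^n}$ (respectively $\widetilde{L_{p,}^n}_{rad}$) on the annulus $A_n$, for $n$ large enough. So the work reduces to plugging in the known bounds on $m(u_p)$ and $m_{rad}(u_p)$ for the least energy sign-changing radial solution.

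For part a), I would first invoke Lemma \ref{LemaAftalionPacella}, which guarantees that any sign-changing radial solution to \eqref{problem} (in particular the least energy nodal radial one) satisfies $m(u_p)\geq N+2$. Then I would apply Proposition \ref{proposition:MorseRadialeConPeso} a) to find $n'_p\in\N^+$ such that $m(u_p)=\widetilde{k_p^n}$ for every $n\geq n'_p$. Combining these two facts yields $\widetilde{k_p^n}\geq N+2$ for $n\geq n'_p$.

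For part b), I would use the well-known equality \eqref{LemmaMorseIndexRadiale}, namely $m_{rad}(u_p)=2$ for least energy nodal radial solutions. Proposition \ref{proposition:MorseRadialeConPeso} b) then gives $m_{rad}(u_p)=\widetilde{k_{p,}^n}_{rad}$ for $n\geq n'_p$, so $\widetilde{k_{p,}^n}_{rad}=2$ in this range. Taking the same $n'_p$ as in part a) (or the maximum of the two, both coming from Lemma \ref{lemma:morseProblemiSenzaPesoAnello}) completes the argument.

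There is no genuine obstacle here: all the nontrivial content is already packaged in Proposition \ref{proposition:MorseRadialeConPeso} (which itself relies on Lemma \ref{lemma:morseProblemiSenzaPesoAnello} plus Lemma \ref{lemmaEquivTraPesoESenzaPeso}) and in the classical Morse index estimates recalled in Section \ref{section:linearizedOperator}. The corollary is essentially a bookkeeping statement that sets up the notation for the key asymptotic analysis of $\widetilde{\beta_1^n}(p)$ to be carried out later.
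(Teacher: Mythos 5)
Your proposal matches the paper's own proof exactly: the authors also deduce the corollary directly from Lemma \ref{LemaAftalionPacella}, the identity \eqref{LemmaMorseIndexRadiale}, and Proposition \ref{proposition:MorseRadialeConPeso}, with the same $n'_p$ coming from Lemma \ref{lemma:morseProblemiSenzaPesoAnello}. Nothing is missing; this is the intended bookkeeping argument.
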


\

\begin{proof}
From Lemma \ref{LemaAftalionPacella},  \eqref{LemmaMorseIndexRadiale} and Proposition \ref{proposition:MorseRadialeConPeso}.
\end{proof}

\

Next result gives an important estimate of the second eigenvalue $\widetilde{\beta_2^n}(p)$ of the auxiliary weighted radial operator $\widetilde{L_{p,}^n}_{rad}$, when $u_p$ is the least energy sign changing radial solution to \eqref{problem}.

\

\begin{proposition}\label{LemmaStimeAutovaloriRadialeConPeso Iparte}
Let $N\geq 2$,  $p\in (1, p_S)$ and $u_p$ be the least energy sign-changing radial solution to \eqref{problem} with $u_p(0)>0$. Then there exists $ n''_p\in\N^+$ such that:
\[\widetilde{\beta_2^n}(p)> -(N-1)\ \ \mbox{ for any }n\geq  n''_p.\]
\end{proposition}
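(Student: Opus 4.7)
My plan is to use the radial derivative $\psi_p(r):=u_p'(r)$ as an explicit ``generalized eigenfunction'' of $\widetilde{L_{p,}^n}_{rad}$ at the level $-(N-1)$, and to combine this with Sturm oscillation theory to exclude a second Dirichlet eigenvalue at or below $-(N-1)$. A direct differentiation of the radial equation $-u_p''-\frac{N-1}{r}u_p'=|u_p|^{p-1}u_p$ in $r$, followed by multiplication by $r^2$, shows that
\[
\widetilde{L_{p,}^n}_{rad}\psi_p=-(N-1)\psi_p\qquad\text{in }(0,1).
\]
By Proposition \ref{PropositionUnicoMaxeMin} the only critical points of $u_p$ in $[0,1)$ are $r=0$ (the maximum) and $r=s_p$ (the minimum), so $\psi_p$ has exactly one zero in $(0,1)$, located at $s_p$. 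Moreover, the Hopf boundary lemma applied to $-u_p$ in its negative nodal region gives $\psi_p(1)=u_p'(1)>0$.

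Next I would fix $n''_p\in\mathbb{N}^+$ large enough that $1/n<s_p$ for every $n\geq n''_p$, and argue by contradiction. Suppose $\widetilde{\beta_2^n}(p)\leq -(N-1)$ and let $\varphi_2^n$ denote the second radial Dirichlet eigenfunction of $\widetilde{L_{p,}^n}_{rad}$ on $(1/n,1)$; by Sturm--Liouville theory $\varphi_2^n$ has exactly one interior zero $z_n\in(1/n,1)$. In the strict case $\widetilde{\beta_2^n}(p)<-(N-1)$, writing both eigenequations in the self-adjoint form $(r^{N-1}v')'+r^{N-1}\bigl[p|u_p|^{p-1}+\beta/r^2\bigr]v=0$ shows that the coefficient for $\psi_p$ strictly dominates the one for $\varphi_2^n$; Sturm's comparison theorem then produces a zero of $\psi_p$ in each of the open intervals $(1/n,z_n)$ and $(z_n,1)$, contradicting the uniqueness of $s_p$.

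The equality case $\widetilde{\beta_2^n}(p)=-(N-1)$ is the main delicate point, because then $\psi_p$ and $\varphi_2^n$ satisfy the same linear second-order ODE on $(1/n,1)$ and ordinary Sturm comparison no longer guarantees strict interlacing. I would handle it as follows. Linear dependence $\psi_p=c\varphi_2^n$ is impossible, since it would force $\psi_p(1)=0$, contrary to $u_p'(1)>0$; in the linearly independent subcase, Sturm's separation theorem forces the zeros of $\psi_p$ to strictly interlace the zeros $1/n,z_n,1$ of $\varphi_2^n$, again yielding at least two zeros of $\psi_p$ in $(1/n,1)$ and a contradiction. Both subcases of this borderline scenario rely essentially on the Hopf-type fact that $\psi_p$ fails the Dirichlet condition at $r=1$, which is what distinguishes the annular approximation from the unperturbed problem and ultimately produces the strict inequality in the statement.
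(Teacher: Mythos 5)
Your proof is correct and takes essentially the same route as the paper: the radial derivative $u_p'$ solves the weighted radial equation at level $-(N-1)$, has a unique zero in the annulus and does not vanish at $r=1$, and Sturm comparison (strict case) plus Sturm separation (equality case) rule out $\widetilde{\beta_2^n}(p)\leq -(N-1)$; the only cosmetic difference is that you invoke the one-interior-zero property of the second eigenfunction, while the paper instead deduces that the second eigenfunction would be positive and contradicts simplicity of the first eigenvalue. One small remark: that $r_p$ is not a critical point of $u_p$ does not follow from Proposition \ref{PropositionUnicoMaxeMin} alone (the nodal regions are open), but from the Hopf lemma at the interior zero $r_p$ (or ODE uniqueness), exactly as the paper notes.
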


\

\begin{proof}
By Proposition \ref{PropositionUnicoMaxeMin} we now that $u_p$ has $2$ nodal regions and that, letting $r_p\in (0,1)$ be the \emph{nodal radius} as defined in \eqref{rp}, then  $u_p(r)>0$ for $r\in (0,r_p)$, $u_p(r)<0$ for $r\in (r_p,1)$, $\upp(r)$ is strictly decreasing for  $r\in (0,r_p)$ and it has  a unique minimum point  $s_p\in (r_p,1)$. Moreover by the Hopf Lemma $\frac{\partial u_p}{\partial r}(r_p)<0$ and  $\frac{\partial u_p}{\partial r}(1)>0$.
 Let $\eta(r):=\frac{\partial u_p}{\partial r}$. Hence by the above considerations for any $n\geq  n''_p:=[\frac1{r_p}]+1$, $\eta$ satisfies
\[\left\{
\begin{array}{lr}
\widetilde{L_{p,}^n}_{rad}\ \eta=-(N-1)\eta, \quad \ r\in (\frac{1}{n},1)
\\
\eta(\frac{1}{n})<0 \
\\
\eta(1)> 0
\end{array}
\right.
\]
and moreover $\eta$ has a unique zero in the interval $(\frac 1n,1)$ if $n\geq  n''_p$. \\
Let $w$ be an eigenfunction of $\widetilde{L_{p,}^n}_{rad}$ associated with the eigenvalue $\widetilde{\beta_2^n}$, namely
\[\left\{
\begin{array}{lr}
\widetilde{L_{p,}^n}_{rad}\ w=\widetilde{\beta_2^n} w, \quad \ r\in (\frac{1}{n},1)
\\
w(\frac{1}{n})=0
\\
w(1)= 0
\end{array}
\right.
\]
Assume by contradiction that $\widetilde{\beta_2^n}\leq-(N-1)$.\\
If $\widetilde{\beta_2^n}=-(N-1)$ then $\eta$ and $w$ are two solutions of the same Sturm-Liouville equation
\[(r^{N-1}v')'+\left[p|u_p(r)|^{p-1}r^{N-1}+\frac{\widetilde{\beta_2^n}}{r^{3-N}}\right]v=0,\quad \ \ \ r\in (\frac{1}{n},1)\]and they are linearly independent because $\eta(1)\neq 0=w(1)$.
As a consequence (Sturm Separation Theorem) the zeros of $\eta$ and $w$ must alternate. Since $\eta $ has a unique zero in $(\frac{1}{n},1)$, this implies that $w>0$ in $(\frac{1}{n},1)$ and so $\widetilde{\beta_2^n}=\widetilde{\beta_1^n}$.
\\
 If $-(N-1)>\widetilde{\beta_2^n}$ then by the Sturm Comparison Theorem, $\eta$ must have a zero between  any two consecutive zeros of $w$. As a consequence, since $\eta $ has a unique zero, it must be $w>0$ in $(\frac{1}{n},1)$ and again $\widetilde{\beta_2^n}=\widetilde{\beta_1^n}$ which is not possible.
\end{proof}

\

\

\

\section{A limit weighted eigenvalue problem}\label{section:limitWeighted}

\

In this section we consider the weighted operator
\[\widetilde L^* v:=|x|^{2}\left[-\Delta v-V(x)v\right],\qquad x\in\mathbb R^N, \qquad N\geq 2,\]
where $V$ is defined as follows
\begin{equation}
\label{limiteV}
V(x):=
\left\{
\begin{array}{lr}
e^{U(x)}=\left(\frac{1}{1+\frac{1}{8}|x|^2}\right)^2  &   \mbox{ if } N= 2.\\
\\
p_S U^{p_S-1}(x)=\frac{N+2}{N-2}\left( \frac{N(N-2)}{N(N-2)+|x|^2} \right)^{2}
  &   \mbox{ if } N\geq 3
\end{array}
\right.
\end{equation}
and $U$ is defined as in \eqref{U} if $N=2$, while for $N\geq3$
\begin{equation}\label{UNgeq3}
    U(x):=\left( \frac{N(N-2)}{N(N-2)+|x|^2} \right)^{\frac{N-2}{2}}
    \end{equation}
is the unique positive bounded radial solution to the critical equation
\[\left\{
\begin{array}{lr}
-\Delta U=U^{\frac{N+2}{N-2}}\ \mbox{ in }\mathbb R^N\\
U(0)=1.
\end{array}
\right.
\]

\

We are interested in computing the first eigenvalue of $\widetilde L^*$  and  exhibit an associated eigenfunction.
In order to define the first eigenvalue we need first to introduce a suitable space of functions.
Let us recall that $D^{1,2}(\R^N)$ is the Hilbert space defined as the closure of $C^{\infty}_c(\R^N)$ with respect to the Dirichlet norm $\|v\|_{D^{1,2}(\R^N)}:=\left(\int_{\R^N}|\nabla v(x)|^2dx\right)^{\frac{1}{2}}$ and let us denote by $D^{1,2}_{rad}(\R^N)$ the subspace of the radial functions in $D^{1,2}(\R^N)$. Moreover let $L^2_{\frac{1}{|x|}}(\R^N)$ be the Hilbert space
\[L^2_{\frac{1}{|x|}}(\R^N):=\left\{v:\R^N\rightarrow \R \ : \ \frac{v}{|x|}\in L^2(\R^N)    \right\}\]
endowed with the scalar product $(u,v):=\int_{\R^N}\frac{u(x)v(x)}{|x|^2}dx$.
\\Then we can define the space
\begin{equation}\label{Drad}
D_{rad}(\R^N):=D^{1,2}_{rad}(\R^N)\cap L^2_{\frac{1}{|x|}}(\R^N)
\end{equation}
%
%
endowed with the scalar product
\[
(u,v)=\int_{\R^N}\nabla u(x)\nabla v(x)\,dx+\int_{\R^N}\frac{u(x)\,v(x)}{|x|^2}dx.
\]
 Observe that
$D_{rad}(\R^N)$  defined in \eqref{Drad} is an Hilbert space and obviously it embeds continuously both in  $D^{1,2}_{rad}(\R^N)$ and in $L^2_{\frac{1}{|x|}}(\R^N)$. Moreover by the Hardy inequality (\cite{Hardy, HardyLittlePolya, OpicKufner}) $D_{rad}(\R^N)= D^{1,2}_{rad}(\mathbb R^N)$ when $N\geq 3$,  while it is well known that $D_{rad}(\R^2)\subsetneq D^{1,2}_{rad}(\mathbb R^2)$. \\

\

Let us set
\begin{eqnarray}\label{defbetastar}
\widetilde\beta^*:=
\inf_{
\substack{
v\in D_{rad}(\R^N)\\ v\neq 0}} \widetilde R^*(v)
\end{eqnarray}
where
\[\widetilde R^*(v):=\frac{\widetilde Q^*(v)}{\|\frac{v}{|x|}\|_{L^2(\R^N)}^2},\]
\[\widetilde Q^*(v):=\int_{\mathbb R^N}\left(|\nabla v(x)|^2-V(x)v(x)^2\right) dx\]
and $D_{rad}(\R^N)$ is the space in \eqref{Drad}.

Since $x\mapsto V(x)|x|^2$ is bounded, $\widetilde Q^*(v)$ and $\widetilde R^*(v)$  are well defined for $v\in D_{rad}(\R^N)$, indeed one has $\int_{\mathbb R^N}|\nabla v(x)|^2dx<\infty$ and $\int_{\R^N}V(x)v(x)^2 dx\leq \sup_{\mathbb R^N}(V(x)|x|^2)\int_{\mathbb R^N}\frac{v(x)^2}{|x|^2}dx=C\int_{\mathbb R^N}\frac{v(x)^2}{|x|^2}dx<\infty$.

\

\

Our main result is the following:
\begin{theorem}\label{lemma:betastar} For any $N\geq 2$
\[\widetilde\beta^*=-(N-1) \]
and it is achieved at the function
\begin{equation}\label{eta1}
\eta_1(x)=\left\{
            \begin{array}{ll}
              \frac{|x|}{1+\frac18|x|^2} & \hbox{if $N=2$} \\
             \frac{|x|}{(1+\frac{|x|^{2}}{N(N-2)})^{\frac{N}{2}}} & \hbox{if $N\geq3$}
            \end{array}
          \right..
\end{equation}
\end{theorem}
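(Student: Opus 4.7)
The plan has two stages: first, exhibit $\eta_1$ as an eigenfunction of the radial part of $\widetilde L^*$ with eigenvalue $-(N-1)$; second, show that $-(N-1)$ is in fact the infimum $\widetilde\beta^*$ via a ground-state substitution, extending the resulting inequality from compactly supported test functions to all of $D_{rad}(\R^N)$ by a cut-off argument.

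For the first stage, I would start from the observation that differentiating the equation satisfied by $U$ (i.e.\ $-\Delta U=e^U$ for $N=2$, respectively $-\Delta U=U^{p_S}$ for $N\geq 3$) in the variable $x_i$ shows that the partial derivatives $\partial_{x_i}U$ solve the linearized equation $-\Delta\phi=V\phi$ on $\R^N\setminus\{0\}$. Writing $\partial_{x_i}U=g(|x|)\,x_i/|x|$ and using $-\Delta_{S^{N-1}}(x_i/|x|)=(N-1)(x_i/|x|)$ to separate variables gives
\[
-g''-\tfrac{N-1}{r}g'+\tfrac{N-1}{r^2}g \;=\; V g, \qquad r>0,
\]
which, after multiplying by $r^2$, is exactly $\widetilde L^*_{rad}\,g=-(N-1)g$ (here by $\widetilde L^*_{rad}$ I mean the radial realization of $\widetilde L^*$ analogous to the $\widetilde{L^n_p}_{rad}$ introduced in Section \ref{section:auxiliaryweightedoperator}). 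A direct calculation from the explicit form of $U$ shows that $g$ is proportional to the function $\eta_1$ in the statement. Verifying $\eta_1\in D_{rad}(\R^N)$ is a routine integration, since $\eta_1(r)\sim r$ near $0$ and has decay $\eta_1(r)\sim 1/r$ for $N=2$, $\eta_1(r)\sim r^{1-N}$ for $N\geq 3$ at infinity; testing the eigenvalue equation against $\eta_1$ then yields $\widetilde R^*(\eta_1)=-(N-1)$, so $\widetilde\beta^*\leq -(N-1)$.

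For the second stage I would use a ground-state transformation: since $\eta_1>0$ on $(0,+\infty)$ and satisfies $-\Delta\eta_1=V\eta_1-\tfrac{N-1}{|x|^2}\eta_1$, for any $v\in C^{\infty}_{c,rad}(\R^N\setminus\{0\})$ write $v=\eta_1 w$ with $w:=v/\eta_1$ smooth and compactly supported away from the origin. Expanding $|\nabla v|^2$, integrating by parts the cross term against $\nabla\eta_1$ (boundary terms vanish on the compact support of $w$), and substituting the equation for $\eta_1$ should produce the identity
\[
\widetilde Q^*(v) \;=\; \int_{\R^N}\eta_1^2|\nabla w|^2\,dx \;-\; (N-1)\int_{\R^N}\frac{v^2}{|x|^2}\,dx,
\]
which immediately gives $\widetilde R^*(v)\geq -(N-1)$, with equality iff $w$ is constant, i.e.\ $v$ is proportional to $\eta_1$. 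A standard two-sided radial cut-off (multiplying $v\in D_{rad}(\R^N)$ by a smooth function equal to $1$ on $[\varepsilon,R]$ and vanishing outside $[\varepsilon/2,2R]$, then letting $\varepsilon\to 0$, $R\to\infty$) then approximates $v$ in the $D_{rad}$-norm by elements of $C^{\infty}_{c,rad}(\R^N\setminus\{0\})$, and since $V(x)|x|^2$ is bounded all three integrals in $\widetilde Q^*$ pass to the limit, upgrading $\widetilde R^*(v)\geq -(N-1)$ to all of $D_{rad}(\R^N)$.

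The main obstacle I anticipate is the density/approximation step, especially near the origin in the two-dimensional case, where $D_{rad}(\R^2)\subsetneq D^{1,2}_{rad}(\R^2)$ and a naive $H^1$-cutoff would introduce a logarithmic divergence; however, the finiteness of $\|v/|x|\|_{L^2(\R^N)}$ built into the definition of $D_{rad}$ is precisely what controls the gradient-of-cutoff error term, so the argument should go through with some care. A minor additional check is that the boundary term from integration by parts on the annulus $\{\varepsilon<|x|<R\}$ indeed decays — which follows from the explicit decay rates of $\eta_1$ and its derivative recorded above.
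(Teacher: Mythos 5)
Your argument is correct, but it reaches the lower bound by a genuinely different route than the paper. You and the paper share the first step (checking that $\eta_1$, which is indeed the radial profile of $\partial_{x_i}U$, solves $-\Delta\eta_1-V\eta_1=-(N-1)\eta_1/|x|^2$ and lies in $D_{rad}(\R^N)$, whence $\widetilde\beta^*\leq -(N-1)$; this is Lemma \ref{lemma:betastarnegativo}). For the reverse inequality the paper runs the direct method: coercivity of $\widetilde Q^*$ on the sphere $\|v/|x|\|_{L^2}=1$, a minimizing sequence, weak convergence, and compactness of the potential term via the decay of $V(x)|x|^2$, producing a nonnegative nontrivial minimizer; the value of the eigenvalue is then pinned down by the separate rigidity result Proposition \ref{unicheSoluzioniDiEquazioneLimite}, whose Wronskian-type proof needs the decay estimates of the Appendix lemmas. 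You instead prove the lower bound directly through the ground-state (Picone-type) substitution $v=\eta_1 w$, which gives $\widetilde Q^*(v)=\int\eta_1^2|\nabla w|^2-(N-1)\int v^2/|x|^2$ for radial $v$ compactly supported away from the origin, and you extend to all of $D_{rad}(\R^N)$ by a two-sided cutoff whose error near $0$ and near $\infty$ is controlled exactly by the $\|v/|x|\|_{L^2}$ term built into $D_{rad}$ --- this is the right observation, and it is what makes the $N=2$ case unproblematic despite $D_{rad}(\R^2)\subsetneq D^{1,2}_{rad}(\R^2)$. Your route is more elementary and self-contained: it avoids the compactness analysis, avoids Proposition \ref{unicheSoluzioniDiEquazioneLimite} and the Appendix decay lemmas, and identifies the minimizer as $\eta_1$ at once (attainment follows simply because $\widetilde R^*(\eta_1)=-(N-1)$ equals the infimum; note that your parenthetical ``equality iff $w$ constant'' is vacuous among compactly supported test functions, but nothing in the proof uses it). What the paper's heavier machinery buys is reusability: Proposition \ref{unicheSoluzioniDiEquazioneLimite} is invoked again in the proof of Theorem \ref{thm:betaplimiteN=2} to identify the limit of $\widetilde\beta_1(p)$, so the paper would need that rigidity statement anyway, whereas your argument, taken alone, establishes only the variational characterization of this theorem.
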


\

The  proof of Theorem \ref{lemma:betastar} is postponed at the end of the section. Here we start with the following:

\

\begin{proposition}\label{unicheSoluzioniDiEquazioneLimite}
Let $\lambda\leq 0$ and let $\eta\in C^2( \R^N\setminus\{0\})\cap D_{rad}(\R^N)$, $\eta\geq0$, $\eta\neq 0$, be a radial solution to
 \begin{equation}\label{equazioneLimite}
 -\lap\eta(x)-V(x)\eta(x)=\lambda\frac{\eta(x)}{|x|^2}\qquad x\in \R^N\setminus\{0\}\\
 \end{equation}
 Then \[\lambda=-(N-1).\]
\end{proposition}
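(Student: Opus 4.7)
The strategy is to compare $\eta$ with the explicit positive function $\eta_1$ of \eqref{eta1}. The first step is to verify directly that $\eta_1$ solves \eqref{equazioneLimite} with $\lambda=-(N-1)$ and lies in $D_{rad}(\mathbb R^N)$: this is a consequence of translation invariance of the bubble equation. Since $-\Delta U=e^U$ when $N=2$ (resp.\ $-\Delta U = U^{p_S}$ when $N\geq 3$), the function $\partial_{x_1}U$ satisfies $(-\Delta-V)\partial_{x_1}U=0$ on $\mathbb R^N\setminus\{0\}$. Writing $\partial_{x_1}U(x)=U'(|x|)\,x_1/|x|$ and separating off the spherical harmonic $Y_1=x_1/|x|$, whose angular eigenvalue is $N-1$, transfers this identity into the radial equation $-f''-(N-1)f'/r-Vf=-(N-1)f/r^2$ for the profile $f(r)=U'(r)$; one checks that $\eta_1$ coincides with a positive constant multiple of $|f|$, and hence also solves \eqref{equazioneLimite} with $\lambda=-(N-1)$.

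Next, I would upgrade $\eta\ge 0$ to $\eta>0$ on $(0,\infty)$: being a $C^2$ nonnegative solution of a second-order linear ODE with coefficients smooth away from $0$, any interior zero $r_0>0$ would force $\eta'(r_0)=0$ by nonnegativity, and uniqueness of the Cauchy problem would then give $\eta\equiv 0$, contradicting the hypothesis. With $\eta,\eta_1$ both strictly positive on $(0,\infty)$, I would apply a Lagrange/Wronskian identity. Rewriting the two radial equations in Sturm--Liouville form $(r^{N-1}v')'+V r^{N-1}v+\mu r^{N-3}v=0$ (with $\mu=\lambda$ for $v=\eta$ and $\mu=-(N-1)$ for $v=\eta_1$), multiplying each by the other and subtracting yields
\[
\bigl(r^{N-1}(\eta_1\eta'-\eta\eta_1')\bigr)'\,=\,-(\lambda+N-1)\,r^{N-3}\,\eta\,\eta_1.
\]
Integrating over $(0,\infty)$, the right-hand side is $-(\lambda+N-1)$ multiplied by a strictly positive, finite constant (positive since $\eta,\eta_1>0$; finite by Cauchy--Schwarz together with $\eta,\eta_1\in L^2_{1/|x|}(\mathbb R^N)$).

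The main obstacle is then to show that the boundary term $W(r):=r^{N-1}(\eta_1\eta'-\eta\eta_1')$ has matching (in fact vanishing) limits as $r\to 0^+$ and $r\to +\infty$, which together with the above identity would force $\lambda+N-1=0$. I would handle these limits by combining the explicit asymptotics of $\eta_1$ (namely $\eta_1(r)\sim r$, $\eta_1'(r)\sim 1$ at the origin and $\eta_1(r)\sim C r^{1-N}$, $\eta_1'(r)\sim -C(N-1)r^{-N}$ at infinity) with the integrability information encoded in $\eta\in D_{rad}(\mathbb R^N)$, supplemented by a Frobenius analysis of the indicial equation at $0$, which selects the regular branch $\alpha_+=\bigl(-(N-2)+\sqrt{(N-2)^2-4\lambda}\bigr)/2$. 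A cleaner route that bypasses the boundary analysis altogether is a Picone-type argument: the pointwise identity $|\nabla v-(v/u)\nabla u|^2\geq 0$ rewritten as $|\nabla v|^2\geq \nabla u\cdot\nabla(v^2/u)$, valid for $u>0$ and $v\in C_c^\infty(\mathbb R^N\setminus\{0\})$, combined with the equation for $u=\eta$ and density of $C_c^\infty(\mathbb R^N\setminus\{0\})$ in $D_{rad}(\mathbb R^N)$, yields $\widetilde R^*(v)\geq\lambda$ for every $v\in D_{rad}(\mathbb R^N)$; testing with $v=\eta_1$ gives $-(N-1)=\widetilde R^*(\eta_1)\geq\lambda$, while the symmetric argument with the roles of $\eta$ and $\eta_1$ exchanged gives $\lambda\geq -(N-1)$. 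Either path concludes that $\lambda=-(N-1)$.
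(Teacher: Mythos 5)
Your ``route B'' (Picone) is a genuinely different argument from the paper's, while your ``route A'' is essentially the paper's proof: the paper multiplies the equation for $\eta_1$ by $\eta$ and vice versa, integrates over annuli $B_{R_n}\setminus B_{r_n}$, and shows the four boundary terms vanish, and the real work sits exactly where you locate ``the main obstacle'' --- pointwise decay of the unknown solution, obtained via the two Appendix lemmas (Kelvin transform plus Strauss' lemma for $N\geq3$ in Lemma \ref{lemma:appendixN3}, a direct ODE estimate for $N=2$ in Lemma \ref{lemma:appendixLinfinito}), combined with radii $r_n,R_n$ along which $r_n^N|\nabla\eta(r_n)|^2\to0$ and $R_n^N|\nabla\eta(R_n)|^2\to0$. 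Your route A stops short of that analysis, but your Picone scheme, if completed, replaces it: positivity of $\eta$ via Cauchy uniqueness is correct, Picone with $u=\eta$ gives $\widetilde R^*(v)\geq\lambda$ on a dense class, Picone with $u=\eta_1$ gives $\widetilde R^*(v)\geq-(N-1)$, and the equality $\widetilde R^*(\eta_1)=-(N-1)$ is the paper's Lemma \ref{lemma:betastarnegativo}; in passing this reproves $\widetilde\beta^*\geq-(N-1)$ without the minimization in Theorem \ref{lemma:betastar}.

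Two steps you assert are not free, and one of them is itself a boundary-term statement of the kind you claim to ``bypass altogether''. First, density of (radial) $C^{\infty}_c(\R^N\setminus\{0\})$ in $D_{rad}(\R^N)$ needs a proof; it is true, and the weight is what makes it work: for cutoffs $\chi$ vanishing on $B_{\varepsilon}$ (resp.\ outside $B_{2R}$) with $|\nabla\chi|\lesssim 1/|x|$ on the transition annulus, one has $\int v^2|\nabla\chi|^2dx\lesssim\int_{\{\varepsilon<|x|<2\varepsilon\}}v^2/|x|^2dx\to0$ by absolute continuity of the integral (similarly at infinity). Second, ``testing with $v=\eta$'' uses $\widetilde R^*(\eta)=\lambda$, i.e.\ you must multiply \eqref{equazioneLimite} by $\eta$ and integrate by parts over $\R^N\setminus\{0\}$ for the \emph{non-explicit} $\eta$ --- again a boundary issue. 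It can, however, be settled softly, with no pointwise decay: either test with $\eta\chi^2$ and kill the cross term by the same cutoff estimate, or integrate over $(r_n,R_n)$ for a single sequence along which $r\bigl(r^{N-1}|\eta'(r)|^2+r^{N-3}\eta(r)^2\bigr)\to0$ (such a sequence exists since $\int_0^{\infty}\bigl(r^{N-1}|\eta'|^2+r^{N-3}\eta^2\bigr)dr<\infty$), noting that then $r^{N-1}\eta(r)|\eta'(r)|=\bigl(r^{N/2}|\eta'(r)|\bigr)\bigl(r^{(N-2)/2}\eta(r)\bigr)\to0$. With these two justifications supplied, your Picone argument is a complete proof, and arguably cleaner than the paper's, since it avoids the Appendix decay lemmas entirely.
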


\

\begin{proof}
It is easy to check that the function $\eta_1$ in \eqref{eta1}
is a solution to
\begin{equation}\label{equazionealpha1}
 -\lap\eta_1(x)-V(x)\eta_1(x)=\lambda_1\frac{\eta_1(x)}{|x|^2}\qquad x\in \R^N\setminus\{0\}
\end{equation}
with $\lambda_1=-(N-1)$.
Let us assume that there exists a function $\eta_2\in C^2( \R^N\setminus\{0\})\cap D_{rad}(\R^N)\setminus\{0\}$, radial and nonnegative solving
\begin{equation}\label{equazionealpha2}
  -\lap\eta_2(x)-V(x)\eta_2(x)=\lambda_2\frac{\eta_2(x)}{|x|^2}\qquad x\in \R^N\setminus\{0\}\\
\end{equation}
for some $\lambda_2\leq 0$.

Being $\int_{\R^N}|\nabla\eta_2|^2 dx<+\infty$ there exist two sequences of radii $r_n\to0$ and $R_n\to+\infty$ such that
\begin{equation*}
r_n^N|\nabla\eta_2(r_n)|^2\to0\qquad\mbox{ and} \qquad R_n^N|\nabla\eta_2(R_n)|^2\to0\mbox{ as }n\to+\infty,
\end{equation*}
so in particular
\begin{equation}\label{Raggetti}
r_n^N|\nabla\eta_2(r_n)|\to0\qquad\textrm{and}\qquad|\nabla\eta_2(R_n)|\to0\quad\textrm{as $n\to+\infty$}.
\end{equation}
Besides, applying Lemma \ref{lemma:appendixN3} and Lemma \ref{lemma:appendixLinfinito} in the Appendix we get that
\begin{equation}\label{dalemmiappendix}
r_n^{N-1}\eta_2(r_n)\to0\qquad\textrm{and}\qquad\frac{\eta_2(R_n)}{R_n}\to0\quad\textrm{as $n\to+\infty$}.
\end{equation}
Next, multiplying \eqref{equazionealpha1} by $\eta_2$ and \eqref{equazionealpha2} by $-\eta_1$, adding them and integrating over $B_{R_n}(0)\setminus B_{r_n}(0)$ we get
\begin{eqnarray}\label{ortogonalita}
(\lambda_1-\lambda_2)\int_{B_{R_n}(0)\setminus B_{r_n}(0)}\frac{\eta_1(x)\eta_2(x)}{|x|^2}dx&=&\underbrace{\int_{\partial B_{R_n}(0)}\eta_1\nabla\eta_2\cdot\nu\  dS}_{=:A_n}+\underbrace{\int_{\partial B_{R_n}(0)}\eta_2\nabla\eta_1\cdot\nu\  dS}_{=:B_n}\nonumber\\
& &-\underbrace{\int_{\partial B_{r_n}(0)}\eta_1\nabla\eta_2\cdot\nu\  dS}_{=:C_n}-\underbrace{\int_{\partial B_{r_n}(0)}\eta_2\nabla\eta_1\cdot\nu\  dS}_{=:D_n}
\end{eqnarray}
where $\nu$ is the outer normal to $\partial B_{R_n}(0)$.
Then by virtue of the previous considerations and using the explicit expression  of $\eta_1$ in \eqref{eta1}, we can estimate $A_n$, $B_n$ $C_n$ and $D_n$ as follows:
\begin{eqnarray*}
|A_n|&\leq & c_N R_n^{N-1}\eta_1(R_n)|\nabla\eta_2(R_n)|\leq c_N R_n^{N-1} R_n^{-(N-1)}|\nabla\eta_2(R_n)|\stackrel{\eqref{Raggetti}}{\longrightarrow}0\qquad\mbox{ as $n\to+\infty$},
\\
|B_n|&\leq & c_N R_n^{N-1}\eta_2(R_n)|\nabla\eta_1(R_n)|\leq c_N R_n^{N-1}\eta_2(R_n) R_n^{-N}=\frac{c_N\eta_2(R_n)}{R_n}\stackrel{\eqref{dalemmiappendix}}{\longrightarrow}0\qquad\mbox{ as $n\to+\infty$},
\\
|C_n|&\leq & c_N r_n^{N-1}\eta_1(r_n)|\nabla\eta_2(r_n)|\leq c_N r_n^{N-1} r_n|\nabla\eta_2(r_n)|\stackrel{\eqref{Raggetti}}{\longrightarrow}0\qquad\mbox{ as $n\to+\infty$},
\\
|D_n|&\leq& c_N r_n^{N-1}\eta_2(r_n)|\nabla\eta_1(r_n)|\leq c_N r_n^{N-1}\eta_2(r_n) \stackrel{\eqref{dalemmiappendix}}{\longrightarrow}0\qquad\mbox{ as $n\to+\infty$},
\end{eqnarray*}
where in the above estimates we have denoted by $c_N$ a generic constant depending only on $N$.
\\
Thus passing to the limit in \eqref{ortogonalita} we get
\[
(\lambda_1-\lambda_2)\int_{\R^N}\frac{\eta_1(x)\eta_2(x)}{|x|^2}dx=0
\]
which implies that $\lambda_2=\lambda_1=-(N-1)$, because $\eta_1>0$ in $\R^N\setminus\{0\}$ and by assumption $\eta_2\geq0$, $\eta_2\not\equiv 0$.
\end{proof}

\

\begin{lemma}\label{lemma:betastarnegativo} $\widetilde\beta^*\leq -(N-1)\ (<0)$.
\end{lemma}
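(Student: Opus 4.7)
The plan is to test the Rayleigh quotient $\widetilde R^*$ directly against the function $\eta_1$ from \eqref{eta1}, exploiting the fact --- already verified at the beginning of the proof of Proposition \ref{unicheSoluzioniDiEquazioneLimite} --- that $\eta_1$ solves
\begin{equation*}
-\Delta \eta_1 - V(x)\eta_1 = -(N-1)\frac{\eta_1}{|x|^2}\quad\textrm{in }\R^N\setminus\{0\}.
\end{equation*}
If I can show that $\eta_1 \in D_{rad}(\R^N)$ and justify the natural integration by parts, the equation above will force $\widetilde R^*(\eta_1) = -(N-1)$, and the variational characterization \eqref{defbetastar} will immediately give the claim.

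The first step is to verify that $\eta_1 \in D_{rad}(\R^N)$. From the closed form \eqref{eta1}, $\eta_1$ is smooth on $(0,\infty)$ with $\eta_1(r)\sim r$ and $\eta_1'(r)\sim 1$ as $r\to 0^+$, and with $\eta_1(r)=O(r^{1-N})$, $\eta_1'(r)=O(r^{-N})$ as $r\to\infty$. Consequently both
\begin{equation*}
\int_0^\infty |\eta_1'(r)|^2\, r^{N-1}\,dr\qquad \textrm{and}\qquad \int_0^\infty \bigl(\eta_1(r)/r\bigr)^2\, r^{N-1}\,dr
\end{equation*}
converge for every $N\geq 2$, so that $\eta_1\in D^{1,2}_{rad}(\R^N)\cap L^2_{\frac{1}{|x|}}(\R^N) = D_{rad}(\R^N)$.

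The second step is the integration by parts. On the annulus $B_R(0)\setminus B_r(0)$, Green's identity gives
\begin{equation*}
\int_{B_R\setminus B_r}|\nabla \eta_1|^2\, dx = -\int_{B_R\setminus B_r}\eta_1 \Delta \eta_1\, dx + \int_{\partial B_R}\eta_1\, \partial_\nu \eta_1\, dS - \int_{\partial B_r}\eta_1\, \partial_\nu \eta_1\, dS.
\end{equation*}
Using the eigenvalue equation to replace $-\Delta \eta_1$ with $V\eta_1 - (N-1)\eta_1/|x|^2$, and inserting the asymptotic estimates above, the two boundary terms are of order $O(r^{N})$ and $O(R^{-N})$ respectively, hence vanish as $r\to 0^+$ and $R\to\infty$ (this is the same mechanism employed in \eqref{Raggetti}--\eqref{dalemmiappendix}, but now applied directly to the explicit $\eta_1$). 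Therefore
\begin{equation*}
\widetilde Q^*(\eta_1) = \int_{\R^N}\bigl(|\nabla \eta_1|^2 - V(x)\eta_1^2\bigr)\,dx = -(N-1)\int_{\R^N}\frac{\eta_1^2}{|x|^2}\,dx,
\end{equation*}
and dividing by the positive quantity $\int_{\R^N}\eta_1^2/|x|^2\,dx$ yields $\widetilde R^*(\eta_1) = -(N-1)$. By \eqref{defbetastar} this forces $\widetilde \beta^*\leq -(N-1)$, as required.

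I do not expect any serious obstacle here: the argument is an explicit, dimension-uniform computation with the closed-form $\eta_1$. The only points requiring attention are the convergence of the improper integrals defining the $D_{rad}$-norm and the vanishing of the boundary terms, and both are settled by the explicit polynomial decay at infinity and the linear behaviour at the origin recorded above.
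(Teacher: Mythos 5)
Your proposal is correct and follows essentially the same route as the paper: the paper's proof also tests the Rayleigh quotient on $\eta_1$, noting that $\eta_1\in D_{rad}(\R^N)$ and satisfies \eqref{equazioneLimite}, so that $\widetilde R^*(\eta_1)=-(N-1)$ and \eqref{defbetastar} gives the bound. You merely spell out the details the paper leaves implicit (the decay estimates ensuring $\eta_1\in D_{rad}(\R^N)$ and the vanishing of the boundary terms in the integration by parts), and these details are accurate.
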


\

\begin{proof}
Let $\eta_1$ be the function defined in \eqref{eta1}. Then $\eta_1\in D_{rad}(\R^N)$ and satisfies the equation \eqref{equazioneLimite}. Multiplying  it
by $\eta_1$ and integrating over $\R^N$ we get
\[\widetilde R^*(\eta_1)= -(N-1)\]
and the conclusion follows recalling the definition of $\widetilde\beta^*$  in \eqref{defbetastar}.
\end{proof}

\

\

\begin{proof}[Proof of Theorem \ref{lemma:betastar}]
First we show a coercivity property:
for all
$v\in D_{rad}(\R^N)$, $\int_{\mathbb R^N} \frac{v(x)^2}{|x|^2}dx=1$
\begin{eqnarray}\label{coercivita}
\widetilde Q^*(v)& = &  \int_{\R^N} |\nabla v(x)|^2dx- \int_{\R^N} V(x)|x|^2\frac{v(x)^2}{|x|^2}dx
\nonumber\\
&\geq & \int_{\R^N} |\nabla v(x)|^2dx -\sup_{\R^N} \left(V(x)|x|^2\right)\int_{\R^N}\frac{v(x)^2}{|x|^2} dx\nonumber\\
&=& \int_{\R^N} |\nabla v(x)|^2dx - C
\end{eqnarray}
where $(0<)\ C:=\sup_{\R^N} \left(V(x)|x|^2\right)<\infty$.
Since one can easily show that
\[\widetilde\beta^* = \ \inf_{
\substack{
v\in D_{rad}(\R^N)\\ \|\frac{v}{|x|}\|_{L^2(\R^N)}^2 =1}} \widetilde Q^*(v),\]
then clearly \eqref{coercivita} implies that $\widetilde\beta^*>-\infty$.\\
%
%
%
%
%
Let $(v_n)_n\subset D_{rad}(\R^N)$ be  a minimizing sequence for \eqref{defbetastar} with $\| \frac{v_n}{|x|}  \|_{L^2(\R^N)}=1$.
Clearly we can assume without loss of generality that $v_n\geq0$ (because otherwise we could consider $|v_n|$).
By the coercivity property \eqref{coercivita} it follows that $v_n$ is bounded in $D^{1,2}_{rad}(\R^N)$ and hence in $D_{rad}(\R^N)$, being $\| \frac{v_n}{|x|}  \|_{L^2(\R^N)}=1$. Therefore, by the reflexivity of $D_{rad}(\R^N)$, there exists $v\in D_{rad}(\R^N)$ such that up to a subsequence
 \begin{eqnarray*}
 && v_n\rightharpoonup v \qquad \mbox{ in  } D_{rad}(\R^N)\\
 &&v_n\rightarrow v \qquad \mbox{ in  } L^{q}(B_R), \  1<q<+\infty \mbox{ if }N=2;\ 1<q<\frac{2N}{N-2} \mbox{ if }N\geq 3\\
 && v_n\rightharpoonup v \qquad \mbox{ in  } D^{1,2}_{rad}(\R^N)\qquad \mbox{ by the continuous embedding of $D_{rad}(\R^N)$ into $D^{1,2}_{rad}(\R^N)$  }\\
 && v_n\rightharpoonup v \qquad \mbox{ in  } L^2_{\frac{1}{|x|^2}}(\R^N)\qquad \mbox{ by the continuous embedding of $D_{rad}(\R^N)$ into $L^2_{\frac{1}{|x|^2}}(\R^N)$  }
\\
&& v_n\rightarrow v\qquad a.e. \mbox{ in }\R^N.
 \end{eqnarray*}
Hence  $v\geq 0$,
\begin{equation}\label{lsc}
\|\nabla v\|_{L^2(\R^N)}\leq \liminf_{n\to+\infty}\|\nabla v_n\|_{L^2(\R^N)}
\end{equation}
and
\begin{equation}\label{vleq1}
\left\| \frac{v}{|x|}\right\|_{L^2(\R^N)}\leq\liminf_{n\to+\infty}\left\| \frac{v_n}{|x|}\right\|_{L^2(\R^N)}=1.
\end{equation}
 Next we show that
 \begin{equation} \label{ContinSecondTerm}
 \int_{\R^N}V(x) v_n(x)^2dx\rightarrow \int_{\R^N}V(x) v(x)^2dx \ \ \mbox{ as }\ n\rightarrow +\infty.
 \end{equation}
Let us fix $\varepsilon >0$ then
\begin{eqnarray*}
\left|\int_{\{|x|>R\}} V(x) (v_n(x)^2-v(x)^2)dx\right|&\leq &\sup_{|x|>R}(V(x)|x|^2)\left[\int_{\{|x|>R\}} \frac{v_n(x)^2}{|x|^2}dx+\int_{\{|x|>R\}} \frac{v(x)^2}{|x|^2}dx\right]\\
&\stackrel{\eqref{vleq1}}{ \leq }&\frac{C}{R^2}<\frac{\varepsilon}{2},
\end{eqnarray*}
choosing $R$ sufficiently large.\\
On the other hand, fixing the same $R$, since $v_n\to v$ in $L^2(B_R)$, also
\[
V^{\frac12}v_n\to V^{\frac12}v \quad \mbox{in $L^2(B_R)$}
\]
and hence
\[
\int_{B_R} V(x) v_n(x)^2\,dx\to\int_{B_R} V(x) v(x)^2 dx.
\]
Therefore  for $n$ large
\[
\left|\int_{B_{ R}} V(x) v_n(x)^2-\int_{B_{ R}}V(x) v(x)^2\right|<\frac{\epsilon}{2},
\]
thus proving \eqref{ContinSecondTerm}.

%
%
 By \eqref{lsc}, \eqref{ContinSecondTerm} and Lemma \ref{lemma:betastarnegativo} it follows that
 \begin{eqnarray} \label{Rstarneg}
\widetilde Q^*(v)&=& \int_{\mathbb R^N}\left(|\nabla v(x)|^2-V(x)v(x)^2\right) dx\leq \liminf_n  \int_{\mathbb R^N}\left(|\nabla v_n(x)|^2-V(x)v_n(x)^2\right) dx\nonumber\\
&=&\widetilde\beta^*\leq -(N-1)<0,
\end{eqnarray}
in particular $\widetilde Q^*(v)<0$ and so $v\neq 0$.
\\
Next we show that
\begin{equation}\label{NomaPesataUno}
 \left\|\frac{v}{|x|}\right\|_{L^2(\R^N)}=1.
 \end{equation}
By the  definition of $\widetilde\beta^*$
 and \eqref{Rstarneg} we have
 \begin{equation}\label{lei}
 \widetilde\beta^*\leq \widetilde R^*(v)=\frac{\widetilde Q^*(v)}{\|\frac{v}{|x|}\|_{L^2(\R^N)}^2}\leq \frac{\widetilde\beta^*}{\|\frac{v}{|x|}\|_{L^2(\R^N)}^2}.
 \end{equation}
 Since $\widetilde\beta^*<0$ then necessarily
 \[
 \left\|\frac{v}{|x|}\right\|_{L^2(\R^N)}\geq 1
 \]
 which together with \eqref{vleq1} gives \eqref{NomaPesataUno}.
 As a consequence from \eqref{lei} we get
 \[\widetilde R^*(v)=\widetilde\beta^*\] namely the infimum of problem \eqref{defbetastar} is attained at $v$.
 \\
Finally since $v\geq0$, $v\neq0$, is a radial solution to
\[
-\lap v(x)-V(x) v(x)=\widetilde\beta^*\frac{v(x)}{|x|^2}\qquad x\in\R^N
\]
with $\widetilde\beta^*<0$ we can apply Proposition \ref{unicheSoluzioniDiEquazioneLimite} obtaining that $\widetilde\beta^*=-(N-1)$.
\end{proof}

%
%
%
%
%
%
%
%
%

\

\

\

\section{$N=2$: asymptotic analysis of the eigenvalues $\widetilde{\beta_1^n}(p)$}\label{section:analysis}

\

In this section we focus on the case $N=2$ and we study the value of the first eigenvalue $\widetilde{\beta_1^n}(p)$ of the auxiliary weighted radial operator $\widetilde{L_{p,}^n}_{rad}$, when $u_p$ is the least energy sign changing radial solution to \eqref{problem}.

\

Our results concern the asymptotic behavior  as $p\rightarrow +\infty$ of a family of eigenvalues
\begin{equation}\label{np}
\widetilde\beta_1(p):=\widetilde\beta_1^{n_p}(p)\qquad\qquad \mbox{ with }\  n_p:=\max\{ n'_p, n''_p,[(\ep_p^+)^{-2}]+1\}
\end{equation}
where $n'_p$ is defined in Corollary \ref{Cor:hpmorsetradotte}, while $n''_p$ is introduced in Proposition \ref{LemmaStimeAutovaloriRadialeConPeso Iparte} and $\varepsilon^+_p$ is defined in \eqref{epsilon+-}.\\
 Notice that this choice of $n_p$ and Corollary \ref{Cor:hpmorsetradotte} imply that $\widetilde\beta_1(p)<0$ for every $p>1$.

 \

The main result of this section is the following.

\

\begin{theorem}\label{thm:betaplimiteN=2} Let $N=2$, then
\[
\lim_{p\to+\infty}\widetilde{\beta_1}(p)= -\frac{\ell^2+2}{2}\simeq -26.9,
\]
where $\ell$ is defined as in \eqref{varie}.
\end{theorem}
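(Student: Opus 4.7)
The goal is to identify the asymptotic value of the first radial eigenvalue $\widetilde\beta_1(p)$ of the weighted operator $\widetilde{L_{p,}^{n_p}}_{rad}$. The heuristic is that since the weight $1/|x|^2$ is scale-invariant and, by \eqref{Q3}, $|x|^2\,p|u_p(x)|^{p-1}$ is bounded, the dominant contribution to the Rayleigh quotient comes from a neighborhood of the origin on the scale $\ep_p^-$, where by Theorem \ref{thm:GGP2}
\[
(\ep_p^-)^2 p|u_p(\ep_p^- y)|^{p-1} \;\underset{p\to+\infty}{\longrightarrow}\; e^{Z_\ell(y)} \quad\text{in } C^0_{loc}(\R^2\setminus\{0\}).
\]
The limit eigenvalue problem is therefore the weighted problem
\[
-\Delta w - e^{Z_\ell(y)}\, w \;=\; \widetilde\beta^{**}\,\frac{w}{|y|^2},\qquad y\in\R^2\setminus\{0\},
\]
and the plan is to (i) compute $\widetilde\beta^{**}=-\frac{\ell^2+2}{2}$ explicitly, (ii) transfer this value to $\widetilde\beta_1(p)$ as $p\to+\infty$ via a matching upper and lower bound argument analogous to Section \ref{section:limitWeighted}.

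\textbf{Step 1: The limit eigenvalue.} Repeating the argument of Section \ref{section:limitWeighted} with $U$ replaced by $Z_\ell$ and $V$ by $e^{Z_\ell}$ — in particular the coercivity estimate based on $\sup_{\R^2}\bigl(|y|^2 e^{Z_\ell(y)}\bigr)<\infty$ and the uniqueness-type Proposition \ref{unicheSoluzioniDiEquazioneLimite} — gives existence of a nonnegative radial minimizer in $D_{rad}(\R^2)$. The explicit value of $\widetilde\beta^{**}$ is found by exhibiting a nonnegative radial eigenfunction $\psi\in D_{rad}(\R^2)$ of the limit equation: using the scaling covariance $Z_\ell(\lambda\cdot)+2\log\lambda$ of the singular Liouville equation and the explicit form \eqref{Zell} of $Z_\ell$, a direct computation produces such a $\psi$ with eigenvalue $-\frac{(\gamma+2)^2}{4}$; since $(\gamma+2)^2=2\ell^2+4$ by \eqref{varie}, this equals $-\frac{\ell^2+2}{2}$.

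\textbf{Step 2: Upper bound.} Use $\psi$ to build a test function. Set
\[
v_p(x):=\chi_p(x)\,\psi(x/\ep_p^-),
\]
where $\chi_p$ is a smooth radial cutoff vanishing on $\partial A_{n_p}$ and equal to $1$ on, say, $\{2/n_p\le|x|\le 1/2\}$. By our choice $n_p\ge [(\ep_p^+)^{-2}]+1$ together with \eqref{quindi:rapportoepsilon+-}, the rescaled inner radius $\frac{1}{n_p\ep_p^-}$ tends to $0$, so that the rescaled domain $A_{n_p}/\ep_p^-$ exhausts $\R^2\setminus\{0\}$. Changing variable $y=x/\ep_p^-$ in \eqref{defbetatilde1n} and using the dominated convergence statement $(\ep_p^-)^2 p|u_p(\ep_p^- y)|^{p-1}\to e^{Z_\ell(y)}$ (plus the decay of $\psi$ inherited from $\psi\in D_{rad}(\R^2)$ to control the cutoff error), we get $R_p^{n_p}[v_p]\to -\frac{\ell^2+2}{2}$ and hence $\limsup_{p\to+\infty}\widetilde\beta_1(p)\le -\frac{\ell^2+2}{2}$.

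\textbf{Step 3: Lower bound.} Let $w_p\ge 0$ be a first eigenfunction, normalized by $\int_{A_{n_p}} w_p^2/|x|^2\,dx=1$. Set $\tilde w_p(y):=w_p(\ep_p^- y)$ on $\tilde A_p:=A_{n_p}/\ep_p^-$. The change of variables gives
\[
\int_{\tilde A_p}\frac{\tilde w_p^2}{|y|^2}\,dy=1,\qquad \int_{\tilde A_p}\Bigl(|\nabla\tilde w_p|^2 - (\ep_p^-)^2 p|u_p(\ep_p^- y)|^{p-1}\tilde w_p^2\Bigr)dy=\widetilde\beta_1(p).
\]
An a priori bound $\widetilde\beta_1(p)\ge -C$ (obtained in a preliminary step by splitting the Rayleigh quotient into the positive-bubble region $|x|\lesssim\ep_p^+$, the transition region, and the negative-bubble region, and using $|x|^2 V_p\le C$ via \eqref{Q3}) combined with the coercivity inequality \eqref{coercivita} applied on $\tilde A_p$ yields a uniform $D_{rad}$ bound on $\tilde w_p$. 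Up to a subsequence $\tilde w_p\rightharpoonup \tilde w_\infty\ge 0$ in $D_{rad}(\R^2)$; by the same truncation argument used for \eqref{ContinSecondTerm} we pass to the limit in the equation and obtain that $\tilde w_\infty$ solves the limit eigenvalue problem with eigenvalue $\widetilde\beta_\infty:=\liminf\widetilde\beta_1(p)$. Once $\tilde w_\infty\not\equiv 0$, the uniqueness-type result of Step 1 (the analogue of Proposition \ref{unicheSoluzioniDiEquazioneLimite}) forces $\widetilde\beta_\infty=-\frac{\ell^2+2}{2}$, completing the proof.

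\textbf{Main obstacle.} The core difficulty is the \emph{no mass loss} statement $\tilde w_\infty\not\equiv 0$, i.e., ruling out that the $L^2_{1/|x|}$-mass of $\tilde w_p$ escapes either to $|y|\to 0$ (the region $|x|\lesssim\ep_p^+$ where $V_p$ is instead modelled on the regular bubble $U$ at scale $\ep_p^+/\ep_p^-\to 0$) or to $|y|\to+\infty$ (corresponding to $|x|$ of order $1$ near $\partial B$). The former is the genuinely delicate point: one has to localize $w_p$ on the intermediate annulus $\ep_p^+\ll|x|\ll 1$ using Hardy-type estimates together with the knowledge from Theorem \ref{lemma:betastar} that the positive bubble only accounts for eigenvalue $-1$, which is much larger than $-\frac{\ell^2+2}{2}\approx -26.9$, so the mass sitting near $0$ cannot be compatible with the minimization; the latter is handled by standard decay of eigenfunctions in the weighted space $L^2_{1/|x|}$.
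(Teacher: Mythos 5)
Your overall architecture is the same as the paper's: an upper bound via test functions built by scaling the explicit eigenfunction of the limit weighted problem to the $\varepsilon_p^-$ scale (your Step 2 is essentially Lemma \ref{lemma:primastimabetapN=2} together with Lemma \ref{lemma:appendixN=2}), and a lower bound obtained by rescaling the first eigenfunction at scale $\varepsilon_p^-$, extracting a weak limit in $D_{rad}(\R^2)$, passing to the limit in the equation, and invoking the uniqueness-type Proposition \ref{unicheSoluzioniDiEquazioneLimite} (the paper does this after the change of variables $\eta(s)=\widehat{\phi}(\delta(\frac{s}{2\sqrt2})^{\frac{2}{2+\gamma}})$ rather than by restating the proposition for the potential $e^{Z_\ell}$, but that is a cosmetic difference). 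Steps 1 and 2 are therefore sound in principle.

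The genuine gap is your Step 3, precisely at the point you label the ``main obstacle'': the non-vanishing statement $\tilde w_\infty\not\equiv 0$, which is Proposition \ref{prop:m>0} in the paper, is not proved but only motivated, and the heuristic you give does not close as stated. Saying that ``the positive bubble only accounts for eigenvalue $-1$, much larger than $-\frac{\ell^2+2}{2}$'' must be turned into an inequality: the paper does this by testing the variational characterization \eqref{defbetastar} of $\widetilde\beta^*=-1$ (Theorem \ref{lemma:betastar}) with $\widehat{\phi_p^+}$, which yields $-1\leq\widetilde\beta_1(p)+\int_{\frac{A_p}{\varepsilon_p^+}}[V_p^+-V^+]\widehat{\phi_{p}^+}^2\,dx$. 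The obstruction is that $V_p^+$ does \emph{not} converge to $V^+$ uniformly on the whole rescaled domain: at distance of order $\varepsilon_p^-/\varepsilon_p^+\to+\infty$ the weighted potential $p|u_p|^{p-1}|x|^2$ exhibits a second spike of height $\ell^2+2\approx 51.8$, so the error term is \emph{not} small, and the generic bound \eqref{Q3} only controls it by an unspecified constant $C$, which gives $\widetilde\beta_1(p)\geq -C'-C\int_{\{|x|\in[\frac1K,K]\}}\frac{\widehat{\phi_p^-}^2}{|x|^2}dx$ with $C'$ possibly larger than $\frac{\ell^2+2}{2}$ and hence no contradiction. What is actually needed, and what your ``Hardy-type estimates'' cannot supply, is the quantitative study of $f_p(r)=p|u_p(r)|^{p-1}r^2$: the monotonicity analysis showing $f_p$ has exactly one maximum point in each nodal interval (Lemmas \ref{lemma:unico_pto_max_fp_in+} and \ref{lemma:unico_pto_max_fp_in-}) combined with the convergence of the rescaled profiles to $g(s)=\bigl(\frac{s}{1+\frac{s^2}{8}}\bigr)^2$ and $h(s)$, giving the sharp bounds $f_p\leq 2+\varepsilon$ on $[0,r_p]$ (Proposition \ref{proposition:bound_fp_+}) and $f_p\leq 2$ on $[r_p,\frac{\varepsilon_p^-}{K}]\cup[K\varepsilon_p^-,1]$ (Proposition \ref{proposition:bound_fp_-}). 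Only with these explicit constants does the error split into a piece bounded by $2+3\varepsilon$ plus $C$ times the mass on the annulus $\{\frac{\varepsilon_p^-}{K}\leq|y|\leq K\varepsilon_p^-\}$, producing $\widetilde\beta_1(p)\geq -4-C\int_{\{|x|\in[\frac1K,K]\}}\frac{\widehat{\phi_p^-}^2}{|x|^2}dx$, which is incompatible with your upper bound $\limsup_p\widetilde\beta_1(p)\leq-\frac{\ell^2+2}{2}$ unless the annular mass stays bounded away from zero. This quantitative intermediate-region analysis is the technical heart of the proof and is missing from your proposal.
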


\

We emphasize that while all the results in the previous sections hold true in any dimension $N\geq 2$ and for any $p\in (1, p_S)$,  Theorem \ref{thm:betaplimiteN=2} is related only to the case $N=2$ and $p\rightarrow +\infty$. Indeed, as we will see, the proof relies on the precise  asymptotic behavior as $p\rightarrow +\infty$ of  $u_p$ when $N=2$, which has been investigated in \cite{GrossiGrumiauPacella2, DeMachisIanniPacellaJEMS} as already recalled in Section \ref{section:preliminaries}.

\

For any fixed $p>1$ let us set
\begin{equation}
\label{definition:Ap}
A_p:=A_{n_p}=\{y\in \mathbb R^2\ :\ \frac{1}{n_p}<|y|<1 \}
\end{equation}
%
and  let $\phi_p$ be the (radial and positive) eigenfunction of $\widetilde{L_{p,}^{n_p}}_{rad}$ associated with the first eigenvalue $\widetilde\beta_1(p)$, which satisfies, for $r=|x|$
\begin{equation}
\label{primaautofunz}
\left\{
\begin{array}{lr}
-\phi_{p}''-\frac{\phi_{p}'}{r}-p|u_p|^{p-1}\phi_{p}=\widetilde{\beta}_1(p) \frac{\phi_{p}}{r^2}, \ \ \ \ r\in (\frac{1}{n_p},1)
\\
\phi_{p}(\frac{1}{n_p})=\phi_{p}(1)=0,
\end{array}
\right.
\end{equation}
and normalized in such a way that
\begin{equation}\label{normalizzazione}
\left\|\frac{\phi_{p}}{|y|}\right\|_{L^2(A_p)}=1.
\end{equation}

\

\begin{lemma}\label{lemma:boundGradiente}
There exists $C>0$ such that
\[
\sup\{\|\nabla\phi_p\|^2_{L^2(A_p)}\,:\,p\in(1,+\infty)\}\leq C.
\]
\end{lemma}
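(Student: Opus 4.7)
The plan is to test the eigenvalue equation \eqref{primaautofunz} against the normalized eigenfunction $\phi_p$ itself. Multiplying by $\phi_p$ and integrating over the annulus $A_p$, the Dirichlet boundary conditions $\phi_p(1/n_p)=\phi_p(1)=0$ make the boundary terms vanish after integration by parts, yielding the Rayleigh-type identity
\begin{equation*}
\int_{A_p} |\nabla \phi_p|^2 \, dy - \int_{A_p} p|u_p|^{p-1} \phi_p^2 \, dy = \widetilde{\beta_1}(p) \int_{A_p} \frac{\phi_p^2}{|y|^2} \, dy.
\end{equation*}
Thanks to the normalization \eqref{normalizzazione} the weighted $L^2$-integral on the right equals $1$, while the choice of $n_p$ in \eqref{np} together with Corollary \ref{Cor:hpmorsetradotte} guarantees $\widetilde{\beta_1}(p)<0$. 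Consequently
\begin{equation*}
\|\nabla \phi_p\|_{L^2(A_p)}^2 \leq \int_{A_p} p|u_p|^{p-1} \phi_p^2 \, dy.
\end{equation*}

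To conclude it is enough to bound the right-hand side uniformly in $p$, and this is where the preliminary pointwise estimate \eqref{Q3} plays its decisive role. Factorizing
\begin{equation*}
\int_{A_p} p|u_p(y)|^{p-1} \phi_p(y)^2 \, dy = \int_{A_p} \bigl( p|y|^{2}|u_p(y)|^{p-1} \bigr) \frac{\phi_p(y)^2}{|y|^2} \, dy
\end{equation*}
and applying \eqref{Q3} to the bracketed factor, together with the normalization \eqref{normalizzazione} used once more, yields the desired bound by a constant $C$ independent of $p$.

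There is no essential obstacle: the argument reduces to a Rayleigh identity combined with the single a priori estimate \eqref{Q3}. The only step requiring a moment of care is the negativity of $\widetilde{\beta_1}(p)$, which has been secured in advance by the very definition \eqref{np} of $n_p$, via Corollary \ref{Cor:hpmorsetradotte}. Notice that no further concentration analysis of $u_p$ is needed at this stage: the singular potential $p|u_p|^{p-1}$ is automatically controlled by the weight $|y|^{-2}$ thanks to the estimate \eqref{Q3}.
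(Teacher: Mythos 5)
Your proposal is correct and coincides with the paper's own argument: multiply \eqref{primaautofunz} by $\phi_p$, integrate, drop the term $\widetilde{\beta_1}(p)\int_{A_p}\phi_p^2/|y|^2\,dy$ using $\widetilde{\beta_1}(p)<0$, and bound the potential term via \eqref{Q3} together with the normalization \eqref{normalizzazione}. No difference in substance.
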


\

\begin{proof}
Since $\widetilde\beta_1(p)<0$ and recalling that  $p|\upp(y)|^{p-1}|y|^2\leq C$ for any $y\in B$ (see \eqref{Q3}) we have:
\begin{eqnarray*}
\int_{A_p}|\nabla\phi_p(y)|^2dy&=&\int_{A_p}p|\upp(y)|^{p-1}\phi_p(y)^2 dy+\widetilde\beta_1(p)\int_{A_p}\frac{\phi_p(y)^2}{|y|^2}dy\\
&\leq&\int_{A_p}p|\upp(y)|^{p-1}|y|^2\frac{\phi_p(y)^2}{|y|^2} dy\leq C\int_{A_p}\frac{\phi_p(y)^2}{|y|^2}dy=C
\end{eqnarray*}
where the last equality follows by \eqref{normalizzazione}.
\end{proof}

\

We start by deriving a, still inaccurate, estimate from below of $\widetilde{\beta}_1(p)$ that will be useful in the sequel.

\begin{lemma}\label{lemma:beta1pbounded}
There exists $C>0$ such that
\begin{equation}\label{betaUnoLimitato}
-C\leq \widetilde{\beta}_1(p)\ (< 0).
\end{equation}
\end{lemma}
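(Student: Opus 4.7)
The upper bound $\widetilde\beta_1(p)<0$ has already been noted right after the definition \eqref{np}, as a consequence of the choice $n_p\geq n_p'$ together with Corollary \ref{Cor:hpmorsetradotte}. So the task reduces to proving the lower bound $\widetilde\beta_1(p)\geq -C$ with a constant $C$ independent of $p$.

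The plan is to exploit the variational characterization \eqref{defbetatilde1n} of $\widetilde\beta_1^{n_p}(p)=\widetilde\beta_1(p)$, applied to arbitrary $v\in H^1_{0,rad}(A_p)\setminus\{0\}$, and to bound the potential term uniformly in $p$ using exactly the pointwise estimate \eqref{Q3}, namely $p|u_p(y)|^{p-1}|y|^2\leq C$ for every $y\in B$ (in particular for $y\in A_p$). Concretely, rewriting the potential integral as
\[
\int_{A_p} p|u_p(y)|^{p-1} v(y)^2\,dy \;=\; \int_{A_p} \bigl(p|u_p(y)|^{p-1}|y|^2\bigr)\,\frac{v(y)^2}{|y|^2}\,dy \;\leq\; C\int_{A_p}\frac{v(y)^2}{|y|^2}\,dy,
\]
we obtain, after discarding the nonnegative gradient term, the inequality
\[
\int_{A_p}\!\bigl(|\nabla v|^2 - p|u_p|^{p-1}v^2\bigr)\,dy \;\geq\; -C\int_{A_p}\frac{v^2}{|y|^2}\,dy.
\]

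Dividing by $\int_{A_p} v^2/|y|^2\,dy >0$ and taking the infimum over $v\in H^1_{0,rad}(A_p)\setminus\{0\}$ in \eqref{defbetatilde1n} yields $\widetilde\beta_1(p)\geq -C$, which combined with the already-known strict negativity gives the claim. The only nontrivial input is the a-priori bound \eqref{Q3}, which is imported directly from \cite{DeMachisIanniPacellaJEMS}; no obstacle beyond this is expected, since the argument is a one-line use of the Rayleigh quotient and does not rely on any asymptotic information on $u_p$. (This explains why the bound is labelled by the authors as \emph{still inaccurate}: the sharp asymptotic value of $\widetilde\beta_1(p)$ stated in Theorem \ref{thm:betaplimiteN=2} will require a much finer analysis via rescaling around the negative nodal region, but for the present lemma the crude uniform bound on the weighted potential is enough.)
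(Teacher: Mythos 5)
Your proof is correct and takes essentially the same route as the paper: both arguments reduce to controlling the potential term via \eqref{Q3} (writing $p|u_p|^{p-1}v^2=(p|u_p|^{p-1}|y|^2)\,v^2/|y|^2\leq C\,v^2/|y|^2$) and using nonnegativity of the gradient term. The paper phrases it by testing the eigenvalue equation \eqref{primaautofunz} with the normalized first eigenfunction $\phi_p$, whereas you bound the Rayleigh quotient in \eqref{defbetatilde1n} uniformly over all radial test functions; the two formulations are equivalent and your version is fine.
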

\begin{proof}

By \eqref{primaautofunz}, multiplying by $\phi_p$ and integrating over $A_p$ we have
\begin{eqnarray*}
\int_{A_p}|\nabla\phi_p(y)|^2dy&=&\int_{A_p}p|\upp(y)|^{p-1}\phi_p(y)^2 dy+\widetilde\beta_1(p)\int_{A_p}\frac{\phi_p(y)^2}{|y|^2}dy\\
&=&\int_{A_p}\left(p|\upp(y)|^{p-1}|y|^2+\widetilde\beta_1(p)\right)\frac{\phi_p(y)^2}{|y|^2} dy\\
&\leq &  \max_{y\in B} \left(p|\upp(y)|^{p-1}|y|^2\right)+ \widetilde\beta_1(p),
\end{eqnarray*}
 where we have used \eqref{normalizzazione}.
 As a consequence $\widetilde{\beta_1}(p)\geq -\max_{y\in B} \left( p|\upp(y)|^{p-1}|y|^2\right)\geq -C$,
 where  the last inequality follows from \eqref{Q3}.
%
%
\end{proof}

Next we give a bound from above of $\widetilde{\beta}_1(p)$, for $p$ large.

\begin{lemma}\label{lemma:primastimabetapN=2}
We have
\[
\limsup_{p\rightarrow +\infty} \widetilde{\beta_1}(p)\leq-\frac{\ell^2+2}{2}.
\]
\end{lemma}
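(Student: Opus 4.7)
The strategy is to use the variational characterization \eqref{defbetatilde1n} with a test function obtained by rescaling the ground state of the formal limit operator. After the change of variables $y=\varepsilon_p^- x$, the identity $u_p(\varepsilon_p^- x)=u_p(s_p)(1+z_p^-(x)/p)$ together with \eqref{zpmeno2} yields
\[
W_p(x):=(\varepsilon_p^-)^2\,p\,|u_p(\varepsilon_p^- x)|^{p-1}=\Big|1+\tfrac{z_p^-(x)}{p}\Big|^{p-1}\underset{p\to+\infty}{\longrightarrow}e^{Z_\ell(x)}
\]
uniformly on compact subsets of $\R^2\setminus\{0\}$. For a radial $v\in C^\infty_c(\R^2\setminus\{0\})$, the rescaled function $\phi_p(y):=v(y/\varepsilon_p^-)$ is radial and the same change of variables turns the Rayleigh quotient in \eqref{defbetatilde1n} into
\[
\widetilde R^{n_p}_p[\phi_p]=\frac{\int_{\R^2}(|\nabla v|^2-W_p v^2)\,dx}{\int_{\R^2}\frac{v^2}{|x|^2}\,dx}.
\]
Thus the task reduces to producing such a $v$ whose corresponding ratio with $e^{Z_\ell}$ in place of $W_p$ does not exceed $-(\ell^2+2)/2$.

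An explicit candidate is available. Setting $a:=(\gamma+2)/2>0$, one checks directly that
\[
\eta^*(r):=\frac{r^a}{\delta^{\gamma+2}+r^{\gamma+2}}
\]
belongs to $D_{rad}(\R^2)$ (the integrability at $0$ and $\infty$ is guaranteed by $a>0$, since $\eta^*\sim r^a$ near $0$ and $\eta^*\sim r^{-a}$ at infinity) and solves
\[
-\lap\eta^*-e^{Z_\ell}\eta^*=-\frac{(\gamma+2)^2}{4}\frac{\eta^*}{|x|^2}\qquad\text{in }\R^2\setminus\{0\},
\]
with, by \eqref{varie}, $(\gamma+2)^2/4=(\ell^2+2)/2$. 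The clean way to discover $\eta^*$ is the substitution $t=(r/\delta)^{\gamma+2}$ followed by $t=e^\tau$, which reduces the radial ODE to $-g_{\tau\tau}-\tfrac{1}{2\cosh^2(\tau/2)}g=\tfrac{\lambda}{(\gamma+2)^2}g$, a P\"oschl-Teller equation with $n=1$ whose unique bound state has energy $-1$ and eigenfunction $1/\cosh(\tau/2)$. Testing the above equation against $\eta^*$ (the boundary terms at $0$ and $\infty$ vanish by the same integrability) yields the desired value of the Rayleigh quotient.

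To obtain admissible test functions for $\widetilde\beta_1(p)$, fix $\epsilon>0$ and, by density, pick a radial $\eta_\epsilon\in C^\infty_c(\R^2\setminus\{0\})$ with $\mathrm{supp}\,\eta_\epsilon\subset\{r_1<|x|<r_2\}$ for some fixed $0<r_1<r_2<+\infty$ and with
\[
\frac{\int(|\nabla\eta_\epsilon|^2-e^{Z_\ell}\eta_\epsilon^2)\,dx}{\int\frac{\eta_\epsilon^2}{|x|^2}\,dx}\le -\frac{\ell^2+2}{2}+\epsilon.
\]
Then $\phi_p(y):=\eta_\epsilon(y/\varepsilon_p^-)$ is supported in $\{r_1\varepsilon_p^-<|y|<r_2\varepsilon_p^-\}$, which lies inside $A_p=\{1/n_p<|y|<1\}$ for $p$ large: the outer bound uses $\varepsilon_p^-\to 0$, while the inner bound follows from $1/n_p\le(\varepsilon_p^+)^2$ and $(\varepsilon_p^+)^2/\varepsilon_p^-\le\varepsilon_p^+\to 0$ (the last inequality because $u_p(0)=\|u_p\|_\infty\ge|u_p(s_p)|$ implies $\varepsilon_p^+\le\varepsilon_p^-$). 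Hence $\phi_p\in H^1_{0,rad}(A_p)$, and \eqref{defbetatilde1n} gives $\widetilde\beta_1(p)\le\widetilde R^{n_p}_p[\phi_p]$. Uniform convergence of $W_p\to e^{Z_\ell}$ on the fixed compact set $\mathrm{supp}\,\eta_\epsilon$ yields $\widetilde R^{n_p}_p[\phi_p]\to\big(\int(|\nabla\eta_\epsilon|^2-e^{Z_\ell}\eta_\epsilon^2)\,dx\big)\big/\big(\int\eta_\epsilon^2/|x|^2\,dx\big)\le-(\ell^2+2)/2+\epsilon$, and letting $\epsilon\to 0$ gives the claim.

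The only real technical point is ensuring $\mathrm{supp}\,\phi_p\subset A_p$ for $p$ large, which is handled by the precise choice of $n_p$ in \eqref{np} together with the asymptotics of $\varepsilon_p^{\pm}$ from Theorem \ref{thm:GGP2}; everything else is explicit computation plus standard density and uniform-convergence arguments.
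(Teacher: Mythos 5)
Your proof is correct and follows essentially the same route as the paper: you test the variational characterization \eqref{defbetatilde1n} with a truncated rescaling of the first eigenfunction of the limit weighted operator with potential $e^{Z_\ell}$ (your $\eta^*$ is, up to a constant and the change of variables $s=2\sqrt2\,(r/\delta)^{(2+\gamma)/2}$, exactly the paper's profile $\psi_p$ in \eqref{psiN=2}, built from $\eta_1$ in \eqref{eta1}), use the convergence \eqref{V^-_ptoV^-} on a fixed compact annulus, and note $(\gamma+2)^2/4=(\ell^2+2)/2$ by \eqref{varie}. The only difference is technical: where the paper controls the truncation explicitly (linear bridges and the computation in Lemma \ref{lemma:appendixN=2}, giving $o_R(1)+o_p(1)$ errors), you replace this with a density/continuity argument in $D_{rad}(\R^2)$, which is equally valid.
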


\

\begin{proof}
We want to show that for any $\varepsilon>0$ there exists $p_{\varepsilon}>1$ such that  for any $p\geq p_\ep$
\begin{equation}\label{primastimabetapN=2}
\widetilde{\beta_1}(p)\leq-\frac{\ell^2+2}{2}+\ep.
\end{equation}
The claim follows considering the radial function $\Psi_{R,p}:\overline{B}\to[0,+\infty)$
\begin{equation}\label{funzionitestN=2}
\Psi_{R,p}(y):=\left\{
              \begin{array}{ll}
                \frac{\psi_p(\frac{\delta\ep^-_p}{R})}{(\frac{\delta\ep^-_p}{R})}(|y|-\frac{\delta\ep^-_p}{R}) & \hbox{$|y|\in[\frac{\delta\ep^-_p}{2R},\frac{\delta\ep^-_p}{R})$} \\
                \psi_p(|y|) & \hbox{$|y|\in[\frac{\delta\ep^-_p}{R},R\delta\ep^-_p]$} \\
                -\frac{\psi_p(R\delta\ep^-_p)}{(R\delta\ep^-_p)}(|y|-2R\delta\ep^-_p) & \hbox{$|y|\in[R\delta\ep^-_p,2R\delta\ep^-_p)$} \\
                0 & \hbox{$|y|\in[0,\frac{\delta\ep^-_p}{2R}]\cup[2R\delta\ep^-_p,1]$}
              \end{array}
            \right.
\end{equation}
for $R$ sufficiently large, where $\psi_p:[0,1]\to[0,+\infty)$ is defined as follows
\begin{equation}\label{psiN=2}
\psi_p(r):=\frac{(\frac{r}{\delta\ep^-_p})^{\frac{2+\gamma}{2}}}{1+(\frac{r}{\delta\ep^-_p})^{2+\gamma}},
\end{equation}
for $\delta$ as in \eqref{varie}.
Indeed, for $p$ large enough, being $\Psi_{R,p}\in H^1_{0,rad}(A_p)$, by the variational characterization of $\widetilde{\beta_1}(p)$  in \eqref{defbetatilde1n} and Lemma \ref{lemma:appendixN=2} in the Appendix we get
\begin{eqnarray}\label{primaBISstimabetapN=2}
\nonumber \widetilde{\beta_1}(p)& \stackrel{\eqref{defbetatilde1n} }{\leq} &\frac{\int_{A_p}|\nabla\Psi_{R,p}(y)|^2-p|\upp(y)|^{p-1}\Psi_{R,p}(y)^2dy}{\int_{A_p}\frac{\Psi_{R,p}(y)^2}{|y|^2}dy}
\\
&\stackrel{\mbox{\scriptsize Lemma \ref{lemma:appendixN=2}} }{\leq} &-\frac{\ell^2+2}{2}(1+o_R(1)+o_p(1))\overset{\eqref{varie}}{\approx} -26.9\ (1+o_R(1)+o_p(1)).
\end{eqnarray}
Note that the function $\Psi_{R,p}$ that we use to evaluate $\widetilde{\beta_1}(p)$ is obtained
by suitably cutting and scaling  $\eta_1$, the eigenfunction associated to the first eigenvalue of the limit weighted operator $\widetilde L^*$ studied in Section \ref{section:limitWeighted} (see \eqref{eta1}), more precisely $\psi_p(r)=\eta_1\left( 2\sqrt{2}(\frac{r}{\delta \varepsilon_p^-} )^{\frac{2+\gamma}{2}} \right)$.
\end{proof}

\

In order to prove Theorem \ref{thm:betaplimiteN=2} one would like to pass to the limit as $p\rightarrow +\infty$ into the equation \eqref{primaautofunz} and deduce the value of $\lim_p \widetilde{\beta}_1(p)$ by studying the limit equation. Anyway since the term $p|u_p|^{p-1}$ is not bounded it is more convenient to consider one of the two  scalings of $\phi_p$, defined for $x\in\frac{A_p}{\varepsilon_p^{\pm}}$, by
\begin{equation}\label{fiCappuccio}
\widehat{\phi_{p}^{\pm}}(x):=\phi_{p}(\varepsilon_p^{\pm} x).
\end{equation}
and pass to the limit in the equation satisfied by it, which is, by \eqref{primaautofunz},
\begin{equation}\label{eq cappuccio pN=2}
\left\{
\begin{array}{lr}
-\Delta\widehat{\phi_{p}^{\pm}}(x)-V^{\pm}_p(x)\widehat{\phi_{p}^{\pm}}(x)=\widetilde{\beta}_1(p) \frac{\widehat{\phi_{p}^{\pm}}(x)}{|x|^2}, \ \ \ \ x\in \frac{A_p}{\varepsilon^{\pm}}
\\
\widehat{\phi_{p}^{\pm}}=0\ \ \ \mbox{ on }\ \partial \left(\frac{A_p}{\varepsilon^{\pm}}\right)
\end{array}
\right.
\end{equation}
where
\begin{equation}
\label{defVp}
V_p^{+}(x)
:=\left|\frac{u_p(\varepsilon_p^{+} x)}{u_p(0)}\right|^{p-1}, \qquad V_p^{-}(x)
:=\left|\frac{u_p(\varepsilon_p^{-} x)}{u_p(s_p)}\right|^{p-1}.
\end{equation}
It is worth to point out that, by definition of $\ep_p^\pm$, by \eqref{epsilonpmN2}, by \eqref{np} and by \eqref{quindi:rapportoepsilon+-} (which implies $\frac{\ep_p^+}{\ep_p^-}\to0$) we have that $\varepsilon_p^{\pm}\to 0$, while $n_p\varepsilon_p^{\pm}\to+\infty$ and so
\begin{equation}\label{npepsilonp}
\frac{A_p}{\varepsilon_p^{\pm}}\to\R^2\setminus\{0\} \qquad\mbox{ as $p\to+\infty$.}
\end{equation}
Moreover $V_p^{\pm}$ is bounded and more precisely, since  by Theorem \ref{thm:GGP2} we have as $p\to+\infty$
\[\begin{array}{lr}
z_p^+\longrightarrow U\quad\mbox{in $C^1_{loc}(\R^2)$\phantom{$\setminus\{0\}_\ell a$} }
\\
z_p^-\longrightarrow Z_\ell\quad\mbox{in $C^1_{loc}(\R^2\setminus\{0\})$}
\end{array}
\]
with $z_p^+$ and $z_p^-$ defined as in \eqref{zp} and \eqref{zm} and $U$ and $Z_\ell$ as in \eqref{U} and \eqref{Zell} respectively,  it follows that, as $p\to+\infty$:
\begin{eqnarray}\label{V^+_ptoV^+}
&& V_p^+=\left|1+\frac{z_p^+}{p}\right|^{p-1}
\rightarrow V^+:=e^{U}
 \ \mbox{ in }\ C^0_{loc}(\mathbb R^2)
\\
&& \label{V^-_ptoV^-}
V_p^-=\left|1+\frac{z_p^-}{p}\right|^{p-1}
\rightarrow V^-:=e^{Z_\ell}
 \ \mbox{ in }\ C^0_{loc}(\R^2\setminus\{0\})\
\end{eqnarray}
Also, denoting still by $\widehat{\phi_{p}^{\pm}}$ the extension to $0$ of $\widehat{\phi_{p}^{\pm}}$ outside of $\frac{A_p}{\ep^{\pm}_p}$, we have that $\widehat{\phi_{p}^{\pm}}$ is bounded in $D_{rad}(\R^2)$, indeed:
%

\

\begin{lemma}\label{lemma:boundrescalatefi}
There exists $C>0$ such that
\begin{equation} \label{boundGradienteRiscalate}
\sup \{ \| \nabla \widehat{\phi_{p}^\pm}\|_{L^2(\R^2)}\,:\,p\in(1,+\infty)\}\leq C.
\end{equation}
Moreover
\begin{equation}\label{normaPesata1}
\left\| \frac{\widehat{\phi_{p}^{\pm}}}{|x|}   \right\|_{L^2(\R^2)}=1.
\end{equation}
\end{lemma}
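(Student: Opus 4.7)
The plan is to reduce both claims to already established identities for $\phi_p$ on $A_p$ by a direct change of variables, exploiting two scaling invariances: in dimension $N=2$ the Dirichlet integral $\int|\nabla v|^2\,dx$ is invariant under $v(x)\mapsto v(\lambda x)$, and in every dimension the weighted integral $\int v^2\,|x|^{-2}\,dx$ is invariant under the same rescaling (the factor $\lambda^{-2}$ from the weight cancels the factor $\lambda^2$ from $dx$, and in dimension $2$ an analogous cancellation happens for the gradient).

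For the gradient bound, I would use the chain rule to get $\nabla\widehat{\phi_p^{\pm}}(x)=\varepsilon_p^{\pm}\,(\nabla\phi_p)(\varepsilon_p^{\pm}x)$, extend $\widehat{\phi_p^{\pm}}$ by $0$ outside $A_p/\varepsilon_p^{\pm}$ (which is consistent with $\phi_p$ being extended by $0$ outside $A_p$), and compute
\[
\int_{\R^2}|\nabla\widehat{\phi_p^{\pm}}(x)|^2\,dx
=(\varepsilon_p^{\pm})^2\int_{A_p/\varepsilon_p^{\pm}}|(\nabla\phi_p)(\varepsilon_p^{\pm}x)|^2\,dx.
\]
The change of variables $y=\varepsilon_p^{\pm}x$, $dy=(\varepsilon_p^{\pm})^2\,dx$, turns the right-hand side into $\int_{A_p}|\nabla\phi_p(y)|^2\,dy$, which is bounded by Lemma \ref{lemma:boundGradiente}. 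This gives \eqref{boundGradienteRiscalate} with the same constant.

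For the normalization, I would apply the same substitution to the weighted integral:
\[
\int_{\R^2}\frac{\widehat{\phi_p^{\pm}}(x)^2}{|x|^2}\,dx
=\int_{A_p/\varepsilon_p^{\pm}}\frac{\phi_p(\varepsilon_p^{\pm}x)^2}{|x|^2}\,dx
=\int_{A_p}\frac{\phi_p(y)^2}{|y|^2}\,dy,
\]
where in the last step $|x|^2=(\varepsilon_p^{\pm})^{-2}|y|^2$ cancels the Jacobian $(\varepsilon_p^{\pm})^{-2}$. By the normalization \eqref{normalizzazione} this quantity equals $1$, proving \eqref{normaPesata1}.

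There is no genuine obstacle: the argument is a bookkeeping computation, and the only point to check carefully is that the two rescalings produce exactly cancelling factors, which in the Dirichlet case is precisely the conformal invariance special to $N=2$ (and would fail in higher dimension, forcing a different normalization of the rescaled problem), while for the Hardy-type weighted integral the cancellation is dimension-free.
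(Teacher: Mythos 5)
Your proposal is correct and coincides with the paper's own argument: both claims follow from the chain rule and the change of variables $y=\varepsilon_p^{\pm}x$, using the scaling invariance of the two-dimensional Dirichlet integral together with Lemma \ref{lemma:boundGradiente} for \eqref{boundGradienteRiscalate}, and the invariance of the Hardy-type integral together with the normalization \eqref{normalizzazione} for \eqref{normaPesata1}. No gaps; this is exactly the paper's proof.
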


\

\begin{proof}
The proof of \eqref{boundGradienteRiscalate}  follows immediately from  the definitions of
$\widehat{\phi_{p}^{\pm}}$, observing that
$
\nabla \widehat{\phi_{p}^{\pm}}(x) = \varepsilon_p^{\pm}\nabla\phi_p(\varepsilon_p^{\pm} x)
$
from which
\begin{equation}\label{bbbb}
\int_{\R^2} |\nabla \widehat{\phi_{p}^{\pm}}(x)|^2 dx =
\int_{\frac{A_p}{\varepsilon_p^\pm}} (\varepsilon_p^\pm)^{2}|\nabla\phi_p(\varepsilon_p^\pm x)|^2 dx=\int_{A_p} |\nabla\phi_p(x)|^2 dx\leq C
\end{equation}
by the bound of $\phi_p$ in
Lemma \ref{lemma:boundGradiente}.\\
\\
 The proof of \eqref{normaPesata1} follows immediately from the definitions \eqref{fiCappuccio}, indeed
\[\int_{\R^2} \frac{\widehat{\phi_{p}^\pm}(x)^2}{|x|^2}dx = (\varepsilon_p^{\pm})^2 \int_{\frac{A_p}{\varepsilon_p^\pm}} \frac{\phi_{p}(\varepsilon_p^\pm x)^2}{|\varepsilon_p^\pm x|^2} dx=\int_{A_p} \frac{\phi_{p}(y)^2}{|y|^2} dy\overset{\eqref{normalizzazione}}{=}1.\]
\end{proof}

\

By the results in Lemma \ref{lemma:beta1pbounded} and Lemma \ref{lemma:primastimabetapN=2} and thanks  to \eqref{npepsilonp}, \eqref{V^+_ptoV^+}, \eqref{V^-_ptoV^-}  and Lemma \ref{lemma:boundrescalatefi} we are now in the position to pass to the limit in \eqref{eq cappuccio pN=2}. However the functions $\widehat{\phi_{p}^{\pm}}$ could a priori vanish  and this would not give any limit equation, so the crucial point is to show that actually
$\widehat{\phi_{p}^{-}}$ does not vanish in the limit as $p\rightarrow +\infty$. This will be obtained as consequence of the following nontrivial result:

\

\begin{proposition}\label{prop:m>0}
There exists $K>1$ such that
\[
\liminf\limits_{p\to+\infty}\int_{\{|x|\in[\frac{1}{K},K]\}}\frac{{\widehat{\phi_{p}^{-}}(x)}^2}{|x|^2} dx>0.
\]
\end{proposition}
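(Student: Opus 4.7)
I would argue by contradiction. Suppose that for every $K>1$, $\liminf_{p\to+\infty}\int_{\{1/K\leq|x|\leq K\}}\widehat{\phi_p^-}(x)^2/|x|^2\,dx=0$; by a diagonal extraction this vanishing can be arranged along a single subsequence (still denoted $p$) simultaneously for every $K>1$. By Lemma \ref{lemma:boundrescalatefi} the sequences $\widehat{\phi_{p}^{\pm}}$ are bounded in $D_{rad}(\R^2)$, so up to a further extraction they converge weakly to limits $\widehat{\phi}^{\pm}\geq 0$ in $D_{rad}(\R^2)$ and, by Rellich, strongly to these limits in $L^2_{loc}(\R^2\setminus\{0\})$.

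The key (and unconditional) observation is that $\widehat{\phi}^+\equiv 0$. Indeed, passing to the limit in \eqref{eq cappuccio pN=2} for $\widehat{\phi_{p}^{+}}$, using the $C^0_{loc}$--convergence $V_p^+\to V^+=e^U$ from \eqref{V^+_ptoV^+} together with \eqref{npepsilonp}, one gets that $\widehat{\phi}^+\in D_{rad}(\R^2)$ is a nonnegative radial solution of $-\Delta v-V^+(x)v=\widetilde{\beta_1^\infty}\,v/|x|^2$ on $\R^2\setminus\{0\}$, with $\widetilde{\beta_1^\infty}:=\lim\widetilde{\beta_1}(p)$ (extracted further if necessary). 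By Lemma \ref{lemma:primastimabetapN=2} one has $\widetilde{\beta_1^\infty}\leq -(\ell^2+2)/2<-1=-(N-1)$, so Proposition \ref{unicheSoluzioniDiEquazioneLimite} applied with $V=V^+$ and $\lambda=\widetilde{\beta_1^\infty}$ forces $\widehat{\phi}^+\equiv 0$.

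Testing \eqref{eq cappuccio pN=2} against $\widehat{\phi_{p}^{-}}$ and using \eqref{normaPesata1} gives
\[
\int_{\R^2}V_p^-(x)\,\widehat{\phi_{p}^{-}}(x)^2\,dx\;=\;\int_{\R^2}|\nabla\widehat{\phi_{p}^{-}}|^2\,dx-\widetilde{\beta_1}(p)\;\geq\;-\widetilde{\beta_1}(p)\;\geq\;\tfrac{\ell^2+2}{2}+o(1)\;>\;0.
\]
From the definitions \eqref{epsilon+-}--\eqref{defVp} one reads off the scaling identity $V_p^-(x)=R_p^2\,V_p^+(xR_p)$ with $R_p:=\varepsilon_p^-/\varepsilon_p^+\to+\infty$, so the change of variables $y=xR_p$ rewrites this as the same lower bound for $\int V_p^+(y)\,\widehat{\phi_{p}^{+}}(y)^2\,dy$.

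The contradiction is then obtained by splitting $\int V_p^+(\widehat{\phi_{p}^{+}})^2=I_1(M)+I_2(M)$ over $\{|y|<M\}$ and $\{|y|\geq M\}$. For fixed $M$, $I_1(M)\to 0$ as $p\to+\infty$ by the $L^2_{loc}$--convergence $\widehat{\phi_{p}^{+}}\to 0$ and the boundedness of $V_p^+$ on $B_M$. For $I_2(M)$ I would combine the uniform bound $V_p^+(y)|y|^2\leq C$ from \eqref{Q3} with the contradiction hypothesis transported to the $+$--picture,
\[
\int_{\{R_p/K\leq|y|\leq R_p K\}}\widehat{\phi_{p}^{+}}(y)^2/|y|^2\,dy\;=\;\int_{\{1/K\leq|x|\leq K\}}\widehat{\phi_{p}^{-}}(x)^2/|x|^2\,dx\;\longrightarrow\;0,
\]
so that the weighted mass of $\widehat{\phi_{p}^{+}}^2/|y|^2$ on $\{|y|\geq M\}$ concentrates on the two extremes $\{M\leq|y|<R_p/K\}$ and $\{|y|>R_p K\}$. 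On the inner extreme the sharp decay $V^+(y)\leq C|y|^{-4}$ for $|y|\geq 1$, combined with the $C^0_{loc}$ convergence $V_p^+\to V^+$, yields a contribution $O(M^{-2})$; on the outer extreme, translating back to the $-$--picture and using the sharp Liouville decay $V^-(x)|x|^2\to 0$ as $|x|\to+\infty$ (inherited from $e^{Z_\ell}$) gives an analogous bound. The main technical obstacle is making these decay estimates uniform in $p$ on the unbounded annulus whose inner radius is the intermediate scale $R_p$: this requires interpolating between the $C^0_{loc}$ convergences of $V_p^{\pm}$ on compact annuli and the uniform \eqref{Q3} bound. Letting first $p\to+\infty$ and then $M,K\to+\infty$ forces $\int V_p^+(\widehat{\phi_{p}^{+}})^2\to 0$, contradicting the lower bound above.
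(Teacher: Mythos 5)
Your overall scheme (contradiction, the scaling identity $\int V_p^-(\widehat{\phi_p^-})^2\,dx=\int V_p^+(\widehat{\phi_p^+})^2\,dy=\int_{A_p}p|u_p|^{p-1}\phi_p^2\,dy$, the lower bound $\geq-\widetilde\beta_1(p)\geq\frac{\ell^2+2}{2}-o(1)$ from Lemma \ref{lemma:primastimabetapN=2}, and the use of Proposition \ref{unicheSoluzioniDiEquazioneLimite} to force $\widehat{\phi}^+\equiv0$) is coherent and is a legitimate reorganization of the paper's argument, which instead plugs $\widehat{\phi_p^+}$ into the Rayleigh quotient of $\widetilde\beta^*=-1$ (Theorem \ref{lemma:betastar}) and bounds the discrepancy $\int(V_p^+-V^+)(\widehat{\phi_p^+})^2$. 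But there is a genuine gap exactly where the paper invests most of its work: your estimate of $I_2(M)$ on the expanding regions $\{M\leq|y|\leq R_p/K\}$ and $\{|y|\geq KR_p\}$, i.e.\ in the original variable on $\{M\varepsilon_p^+\leq r\leq \varepsilon_p^-/K\}\cup\{K\varepsilon_p^-\leq r\leq1\}$. On these sets the quantity to control is $f_p(r)=p|u_p(r)|^{p-1}r^2=V_p^+(y)|y|^2$, and neither of your two inputs suffices: the $C^0_{loc}$ convergences \eqref{V^+_ptoV^+}--\eqref{V^-_ptoV^-} only control $V_p^{\pm}$ on \emph{fixed} compact annuli in the $\varepsilon_p^{\pm}$ scales, while the uniform bound \eqref{Q3} gives $f_p\leq C$ with a constant that is \emph{not} small --- indeed $\sup f_p\to\ell^2+2\approx54$ (it is attained near $\delta\varepsilon_p^-$), so $C\geq\ell^2+2>\frac{\ell^2+2}{2}$. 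Since under your contradiction hypothesis the weighted mass $\int\phi_p^2/|y|^2$ may concentrate precisely in these intermediate regions (it vanishes only on the fixed $+$ balls and on the annulus $[\varepsilon_p^-/K,K\varepsilon_p^-]$), no interpolation between local uniform convergence and \eqref{Q3} can rule out a contribution of size comparable to $\ell^2+2$, and the claimed $O(M^{-2})$ decay on $\{M\leq|y|\leq R_p/K\}$ simply does not follow from what you invoke.

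What closes this gap in the paper is the dedicated ODE analysis of $f_p$: Lemmas \ref{lemma:unico_pto_max_fp_in+} and \ref{lemma:unico_pto_max_fp_in-} show (using the equation satisfied by $u_p$) that $f_p$ is unimodal in each nodal interval, and Propositions \ref{proposition:bound_fp_+} and \ref{proposition:bound_fp_-} then yield the uniform bounds $f_p\leq2+\varepsilon$ on $[0,r_p]$ and $f_p\leq2$ on $[r_p,\varepsilon_p^-/K]\cup[K\varepsilon_p^-,1]$ for $p$ large. With these bounds (or, pushing the same monotonicity argument slightly further, with $f_p\leq g(M)+\varepsilon$ and $f_p\leq h(1/K)+\varepsilon$ on the two intermediate ranges, which would give the smallness you assert), your accounting goes through; note you do not even need the full integral to tend to $0$, only to stay below $\frac{\ell^2+2}{2}$, and the crude bound $o(1)+(2+\varepsilon)+o(1)+2$ already suffices, which is essentially how the paper concludes ($\widetilde\beta_1(p)\geq-4-C\int_{\{\varepsilon_p^-/K\leq|y|\leq K\varepsilon_p^-\}}\phi_p^2/|y|^2\,dy$, contradicting Lemma \ref{lemma:primastimabetapN=2} if the annulus mass vanished). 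As written, however, the decisive uniform control of $f_p$ between and beyond the two concentration scales is asserted rather than proved, so the proposal is incomplete at its critical step.
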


\

The proof of Proposition \ref{prop:m>0} needs several ingredients: the results of Section \ref{section:limitWeighted}, the definition of $\widehat{\phi_{p}^{\pm}}$ and its properties, the convergence result in \eqref{V^+_ptoV^+},  Lemma \ref{lemma:primastimabetapN=2}. Moreover it strongly depends on the asymptotic behavior of $u_p$ in dimension $N=2$, in particular we need to analyze the behavior of the function $f_p(r):=p |u_p(r)|^{p-1}r^2$ in the positive and the negative nodal region of $u_p$, which is done next and  leads to Proposition
\ref{proposition:bound_fp_+} and Proposition \ref{proposition:bound_fp_-} below. The proof of Proposition \ref{prop:m>0} is therefore postponed after the study of $f_p$.
\\
Finally the conclusion of the proof of Theorem \ref{thm:betaplimiteN=2}, obtained passing to the limit in the equation of $\widehat{\phi_{p}^{-}}$,  is postponed at the end of the section. As it will be clear from the proof, the great part of the contribution to the limit in Theorem \ref{thm:betaplimiteN=2} comes from the negative nodal region of $u_p$.

\

\

\subsection{Study of the function $f_p(r)=p|\upp(r)|^{p-1}r^2$}

\

\

We aim now to study the behavior of the function
\begin{equation}\label{f_p}
f_p(r)=p|\upp(r)|^{p-1}r^2\qquad\qquad\mbox{for $r\in[0,1]$.}
\end{equation}
where $u_p$ is the least energy nodal radial solution to \eqref{problem} when $N=2$.

\

\begin{lemma}\label{lemma:unico_pto_max_fp_in+} The function $f_p$ has a unique critical point $c_p$, which is a point of maximum, in $(0,r_p)$, where $r_p$ is the nodal radius of $\upp$ as in \eqref{rp}. Moreover $f_p$ is strictly increasing for $r\in(0,c_p)$ and strictly decreasing for $r\in(c_p,r_p)$.
\end{lemma}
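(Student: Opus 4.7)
The plan is to reduce the study of critical points of $f_p$ on $(0,r_p)$ to that of a much simpler auxiliary function, by using the equation to kill the second derivative that appears.

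On $(0,r_p)$ we have $u_p>0$, so $f_p(r)=p\,u_p(r)^{p-1}r^2$ is smooth, and a direct differentiation gives
\begin{equation*}
f_p'(r)=p\,r\,u_p(r)^{p-2}\bigl[(p-1)\,r\,u_p'(r)+2\,u_p(r)\bigr]=p\,r\,u_p(r)^{p-2}\,g_p(r),
\end{equation*}
where I set $g_p(r):=(p-1)\,r\,u_p'(r)+2\,u_p(r)$. Since the prefactor $p\,r\,u_p(r)^{p-2}$ is strictly positive on $(0,r_p)$, the critical points of $f_p$ in this interval are exactly the zeros of $g_p$, and they have matching signs for $f_p'$ and $g_p$. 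So it suffices to show that $g_p$ has a unique zero in $(0,r_p)$ and that this zero is a sign change from $+$ to $-$.

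For the boundary values, $g_p(0)=2u_p(0)>0$ by Proposition \ref{PropositionUnicoMaxeMin}, while $g_p(r_p)=(p-1)\,r_p\,u_p'(r_p)<0$ by Hopf's Lemma applied to the positive nodal region (where $u_p>0$ in $(0,r_p)$ and $u_p(r_p)=0$). Hence $g_p$ has at least one zero in $(0,r_p)$.

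For uniqueness, the key step is to compute $g_p'$ and then to eliminate $u_p''$ by means of the radial form of \eqref{problem}, namely $-u_p''-\tfrac{1}{r}u_p'=u_p^p$ on $(0,r_p)$. Straightforward algebra yields
\begin{equation*}
g_p'(r)=(p+1)u_p'(r)+(p-1)r\,u_p''(r)=2\,u_p'(r)-(p-1)\,r\,u_p(r)^p.
\end{equation*}
By Proposition \ref{PropositionUnicoMaxeMin} the only critical point of $u_p$ in $(0,r_p)$ is the origin and $u_p(0)=\|u_p\|_\infty$, so $u_p'(r)<0$ for every $r\in(0,r_p]$; combined with $u_p>0$ and $r>0$, both terms in the expression for $g_p'$ are strictly negative, whence $g_p$ is strictly decreasing on $(0,r_p]$. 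Therefore $g_p$ has a unique zero $c_p\in(0,r_p)$, with $g_p>0$ on $(0,c_p)$ and $g_p<0$ on $(c_p,r_p)$, which by the factorization of $f_p'$ gives the desired monotonicity and maximality at $c_p$.

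The only potentially delicate point is ensuring that the derivation of $g_p'$ at $r=0$ is not an issue; this is harmless since radial regularity yields $u_p'(0)=0$ and we only need strict monotonicity of $g_p$ on the open interval. No deeper obstacle is expected: the proof reduces to an elementary Hopf-plus-IVT argument once the ODE is used to recast $g_p'$ in a manifestly sign-definite form.
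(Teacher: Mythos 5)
Your proof is correct, but it follows a different route from the paper's. Both arguments start from the same factorization $f_p'(r)=p\,r\,u_p(r)^{p-2}\bigl[(p-1)r u_p'(r)+2u_p(r)\bigr]$, but the paper never looks at the monotonicity of the bracket: it instead evaluates $f_p''$ at an arbitrary critical point, uses the equation \eqref{problem} together with the critical point relation $-u_p'(c_p)=\frac{2u_p(c_p)}{(p-1)c_p}$ to show that $f_p''(c_p)<0$, i.e.\ every critical point in $(0,r_p)$ is a strict maximum, and concludes uniqueness because two maxima would force an intermediate minimum (existence being free from $f_p(0)=f_p(r_p)=0$ and $f_p>0$ inside). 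You instead differentiate the bracket $g_p$, use the $N=2$ radial equation to recast $g_p'=2u_p'-(p-1)ru_p^p$, and combine the strict monotonicity of $u_p$ on the positive nodal region (Proposition \ref{PropositionUnicoMaxeMin}) with Hopf's lemma at $r_p$ to get a strictly decreasing $g_p$ with a single sign change; this is a clean first-derivative argument and in fact yields the stated monotonicity of $f_p$ on $(0,c_p)$ and $(c_p,r_p)$ somewhat more directly. What the paper's variant buys is that it does not use $u_p'<0$ at all, so the identical computation transfers verbatim to the negative nodal region in Lemma \ref{lemma:unico_pto_max_fp_in-}, where $u_p$ is not monotone (it has the interior minimum at $s_p$) and your monotone-$g_p$ argument would not carry over as stated; for the interval $(0,r_p)$ covered by the present lemma, however, your argument is complete, and the small regularity caveat you raise at $r=0$ is indeed harmless since only the open interval is needed.
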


\

\begin{proof}
Since, for $r\in(0,r_p)$, $\upp(r)$ is nonnegative and
\[
f'_p(r)=p(\upp(r))^{p-2}r [(p-1)\upp'(r)r+2\upp(r) ],
\]
we have that $c_p\in(0,r_p)$ is a critical point of $f_p$ if and only if
\begin{equation}\label{upp'(c_p)}
-\upp'(c_p)=\frac{2\upp(c_p)}{(p-1) c_p}.
\end{equation}
Let $c_p\in(0,r_p)$ be a critical point of $f_p$. Then computing the seconde derivative of $f_p$ we get
\[
f''_p(c_p)=p(\upp(c_p))^{p-2}c_p[(p-1)\upp''(c_p) c_p+(p+1)\upp'(c_p)],
\]
thus $f''_p(c_p)$ has the same sign of
\begin{eqnarray*}
(p-1)\upp''(c_p) c_p+(p+1)\upp'(c_p)&\stackrel{\eqref{problem}}{=}&(p-1)(-\frac{\upp'(c_p)}{c_p}-(\upp(c_p))^p) c_p+(p+1)\upp'(c_p)\\
&\stackrel{\eqref{upp'(c_p)}}{=}& \frac{2\upp(c_p)}{c_p}-(\upp(c_p))^p(p-1) c_p-\frac{p+1}{p-1}\frac{2\upp(c_p)}{c_p}\\
&=&\frac{\upp(c_p)}{c_p}[-(p-1) c_p^2(\upp(c_p))^{p-1}-\frac4{p-1}]<0
\end{eqnarray*}
and therefore $c_p$ is a strict maximum point. Being $f_p(0)=f_p(r_p)=0$ and $f_p>0$ for any $r\in(0,r_p)$ the assertion follows immediately. Indeed note that there cannot be two points of maxima otherwise there should be a minimum point in between .
\end{proof}

\

\begin{proposition}\label{proposition:bound_fp_+}
For any $\ep>0$ there exists $p_\ep >1$ such that for any $p\geq p_\ep$:
\[
f_p(c_p)=\max_{r\in[0,r_p]}f_p(r)\leq 2+\ep,
\]
with $r_p$ as in \eqref{rp}.
\end{proposition}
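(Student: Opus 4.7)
The plan is to exploit the asymptotic profile of $u_p$ in the positive nodal region (Theorem \ref{thm:GGP2}) and the unique-critical-point structure of $f_p$ (Lemma \ref{lemma:unico_pto_max_fp_in+}) to locate the maximum point $c_p$ at the scale $\varepsilon_p^+$ and then compute the limit value explicitly.

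First I would rescale. Setting $g_p(s):=f_p(\varepsilon_p^+ s)$ for $s\in[0,r_p/\varepsilon_p^+]$, a direct computation using $(\varepsilon_p^+)^{-2}=p\,u_p(0)^{p-1}$ and the definition \eqref{zp} of $z_p^+$ gives the identity
\[
g_p(s)=s^2\left(1+\frac{z_p^+(s)}{p}\right)^{p-1}.
\]
By Theorem \ref{thm:GGP2}, $z_p^+\to U$ in $C^1_{loc}(\mathbb R^2)$ and $r_p/\varepsilon_p^+\to+\infty$, so $g_p\to g$ uniformly on compact subsets of $[0,+\infty)$, where
\[
g(s):=s^2 e^{U(s)}=\frac{s^2}{\bigl(1+\tfrac{1}{8}s^2\bigr)^2}.
\]
An elementary calculation shows that $g$ is strictly increasing on $[0,2\sqrt 2]$, strictly decreasing on $[2\sqrt 2,+\infty)$, and attains its unique maximum $g(2\sqrt 2)=2$.

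Next I would locate $c_p$ at the scale $\varepsilon_p^+$. By Lemma \ref{lemma:unico_pto_max_fp_in+}, $g_p$ has a unique critical (maximum) point $\widetilde c_p:=c_p/\varepsilon_p^+\in(0,r_p/\varepsilon_p^+)$, with $g_p$ strictly increasing on $[0,\widetilde c_p]$ and strictly decreasing on $[\widetilde c_p,r_p/\varepsilon_p^+]$. Fix $K>2\sqrt 2$ large enough that $g(K)<g(2\sqrt 2)=2$; for concreteness $K=3\sqrt 2$ works, giving $g(K)=288/169<2$. By the local uniform convergence $g_p\to g$ on $[0,K]$, for $p$ sufficiently large one has both $g_p(K)<g(2\sqrt 2)$ and $g_p(2\sqrt 2)$ arbitrarily close to $g(2\sqrt 2)$, so in particular $g_p(K)<g_p(2\sqrt 2)$. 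Since $g_p$ is strictly increasing on $[0,\widetilde c_p]$, the inequality $g_p(K)<g_p(2\sqrt 2)$ with $K>2\sqrt 2$ forces $\widetilde c_p<K$ (otherwise $g_p$ would be increasing on the whole of $[0,K]$, contradicting the previous inequality).

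Finally I would conclude: for such $p$,
\[
f_p(c_p)=g_p(\widetilde c_p)\le \max_{[0,K]}g_p,
\]
and by the uniform convergence $\max_{[0,K]}g_p\to\max_{[0,K]}g=2$. Hence given $\varepsilon>0$ there exists $p_\varepsilon>1$ such that $f_p(c_p)\le 2+\varepsilon$ for every $p\ge p_\varepsilon$, which is the claim. The main obstacle is the boundedness of $\widetilde c_p$, i.e.\ ensuring that the maximum of $f_p$ occurs in the ``bubble region'' of size $\varepsilon_p^+$ rather than escaping to the boundary of the positive nodal region; this is handled by combining the unique-critical-point property (Lemma \ref{lemma:unico_pto_max_fp_in+}) with the non-monotonicity of the limit profile $g$.
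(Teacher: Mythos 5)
Your proof is correct and follows essentially the same route as the paper: rescale $f_p$ at the scale $\varepsilon_p^+$, use the $C^1_{loc}$ convergence $z_p^+\to U$ to get $g_p\to g(s)=s^2e^{U(s)}$ locally uniformly, and combine the unique-maximum structure of Lemma \ref{lemma:unico_pto_max_fp_in+} with a comparison of values at $\sqrt 8$ and at a larger radius $K$ to trap $c_p$ in $(0,K\varepsilon_p^+)$, then bound $f_p(c_p)$ by $\max_{[0,K]}g_p\to 2$. The only cosmetic difference is that the paper chooses $K_\varepsilon$ with $g(K_\varepsilon)\le\varepsilon$ (depending on $\varepsilon$) whereas you fix one $K$ with $g(K)<2$; both implementations of the localization are equivalent.
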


\

\begin{proof}
We set, for $s\in[0,\frac{r_p}{\ep^+_p})$, $g_p(s):=f_p(\ep^+_p s)$. Then, by definition of $\ep^+_p$ (see \eqref{epsilon+-}), and \eqref{V^+_ptoV^+} we obtain:
\begin{equation}\label{gptog}
g_p(s)=V_p^+(s)s^2\underset{p\rightarrow +\infty}{\longrightarrow} V^+(s)s^2=\left(\frac{s}{1+\frac{s^2}{8}}\right)^2=:g(s)\quad\mbox{in $C^0_{loc}([0,+\infty))$}.
\end{equation}
%
%
%
%
%
%
%
%
%
%
%
Observe that for the function $g$ it holds: $g>0$ in $(0,\infty)$, $g(0)=0$,  $g(s)\rightarrow 0$ as $s\rightarrow +\infty$,  it has a unique strict maximum at  $s=\sqrt{8}$ with $g(\sqrt{8})=2$ and it is strictly increasing for $s<\sqrt{8}$ and strictly decreasing for $s>\sqrt{8}$.
\\
Let $\varepsilon>0$ and let $K_{\varepsilon}>\sqrt{8}$ be sufficiently large so that $g(K_{\varepsilon})\leq \varepsilon$, then by \eqref{gptog}
\begin{equation}\label{convg}
g_p\rightarrow g\ \mbox{ in }[0,K_{\varepsilon}]\quad\mbox{uniformly.}
\end{equation}
Hence in particular there exists $p_{\varepsilon}>1$ such that for $p\geq p_{\varepsilon}$
\begin{eqnarray}
&& f_p(0)=g_p(0)\leq g(0)+\varepsilon=\varepsilon \label{condizioni1}\\
&& f_p(\varepsilon_p^+ \sqrt{8})=g_p(\sqrt{8})\geq g(\sqrt{8})-\varepsilon= 2-\varepsilon \label{condizioni2}\\
&& f_p(\varepsilon_p^+ K_{\varepsilon})=g_p(K_{\varepsilon})\leq g(K_{\varepsilon})+\varepsilon\leq 2 \varepsilon \label{condizioni3}
\\
&& f_p(r)=g_p(\frac{r}{\varepsilon_p^+})\leq  g(\sqrt{8})+\varepsilon=2+\varepsilon\qquad\forall r\in [0,\varepsilon_p^+ K_{\varepsilon}] \label{condizioni4}
\end{eqnarray}
but $[0,\varepsilon_p^+ K_{\varepsilon}]\subset [0,r_p]$ for $p$ sufficiently large (since $\frac{r_p}{\varepsilon_p^+}\rightarrow +\infty$ by \eqref{quindi:rapportoepsilon+-}) and by Lemma \ref{lemma:unico_pto_max_fp_in+} we know that in $[0,r_p]$ the function $f_p$ has a unique maximum point $c_p$ and that it is strictly increasing for $r<c_p$ and strictly decreasing for $r>c_p$. Thus  \eqref{condizioni1}-\eqref{condizioni2}-\eqref{condizioni3} necessarily imply that for $p$ large $c_p\in (0,\varepsilon_p^+ K_{\varepsilon})$.
The conclusion then follows by \eqref{condizioni4} applied at $r=c_p$.
\end{proof}

\

\begin{lemma}\label{lemma:unico_pto_max_fp_in-}
The function $f_p$ has a unique critical point $d_p$, which is a point of maximum, in $(r_p,1)$, where $r_p$ is the nodal radius of $\upp$ defined in \eqref{rp}. Moreover $f_p$ is strictly increasing for $r\in(r_p,d_p)$ and strictly decreasing for $r\in(d_p,1)$.
\end{lemma}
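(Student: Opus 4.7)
The plan is to mirror the argument of Lemma \ref{lemma:unico_pto_max_fp_in+} in the negative nodal region $(r_p,1)$. First I would introduce $w:=-\upp$, which is strictly positive on $(r_p,1)$ and still satisfies the Lane-Emden equation in the form $-w''-w'/r=w^p$ on this interval; in these variables $f_p(r)=p\,w(r)^{p-1}r^2$ has literally the same shape as in the positive case, so the whole computation carries over with essentially no changes.

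I would then compute
\[
f_p'(r)=p\,w(r)^{p-2}r\bigl[(p-1)w'(r)r+2w(r)\bigr],
\]
so that any critical point $d_p\in(r_p,1)$ is characterized by $w'(d_p)=-\tfrac{2w(d_p)}{(p-1)d_p}$. Differentiating once more, using the equation to replace $w''(d_p)$ by $-w'(d_p)/d_p-w(d_p)^p$, and then substituting the critical-point relation, the sign of $f_p''(d_p)$ reduces to that of
\[
-\frac{4\,w(d_p)}{(p-1)d_p}-(p-1)d_p\,w(d_p)^p,
\]
which is manifestly strictly negative. Hence every critical point of $f_p$ in $(r_p,1)$ is automatically a strict local maximum.

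To conclude I would combine this with $f_p(r_p)=0=f_p(1)$ (the second equality comes from the boundary condition $\upp(1)=0$) together with $f_p>0$ on $(r_p,1)$: if there were two distinct critical points $d_1<d_2$ of $f_p$ in $(r_p,1)$, then, since $d_1$ and $d_2$ are strict local maxima, $f_p$ would attain its minimum on $[d_1,d_2]$ at some interior point, which would be a further critical point and necessarily a local minimum — contradicting what was just shown. This yields uniqueness of $d_p$, and the asserted monotonicity on $(r_p,d_p)$ and $(d_p,1)$ follows at once from $f_p(r_p)=f_p(1)=0$ and $f_p>0$ in between.

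I do not anticipate any real obstacle: the only delicate step is the sign bookkeeping when passing from $\upp$ (negative on $(r_p,1)$) to $w=-\upp$, but once this reduction has been carried out the algebra is virtually identical to that of Lemma \ref{lemma:unico_pto_max_fp_in+}.
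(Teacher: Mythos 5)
Your proposal is correct and follows essentially the same route as the paper: characterize critical points of $f_p$ via $f_p'=0$, use the radial Lane--Emden equation to show $f_p''<0$ at any such point, and conclude from $f_p(r_p)=f_p(1)=0$ and $f_p>0$ on $(r_p,1)$. The substitution $w=-u_p$ is only a cosmetic device for the sign bookkeeping and does not change the argument.
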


\

\begin{proof}
Exactly as in the proof of Lemma \ref{lemma:unico_pto_max_fp_in+} we have that $d_p\in(r_p,1)$ is a critical point of $f_p$ if and only if
\begin{equation}\label{upp'(d_p)}
\upp'(d_p)=\frac{2|\upp(d_p)|}{(p-1) d_p}.
\end{equation}
Moreover, for any critical point $d_p\in(r_p,1)$,
$$f''_p(d_p)=p|\upp(d_p)|^{p-1}[-(p-1)|\upp(d_p)d_p^2-\frac{4}{p-1}]<0,$$
Therefore, since $f_p(r_p)=f_p(1)=0$ and $f_p>0$ for any $r\in(r_p,1)$ the assertion follows immediately.
\end{proof}

\

\begin{proposition}\label{proposition:bound_fp_-}
There exists $K>1$ and $p_{ K}>1$ such that for any $p\geq p_{ K}$:
\[
\max_{r\in[r_p,\frac{\ep_p^-}{ K}]\cup[\ep_p^-K,1]}f_p(r)\leq 2,
\]
with $r_p$ as in \eqref{rp} and $\varepsilon_p^-$ as in \eqref{epsilon+-}.
\end{proposition}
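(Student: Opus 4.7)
The plan is to mimic the proof of Proposition~\ref{proposition:bound_fp_+} but using the scaling $\varepsilon_p^-$ and the convergence $V_p^-\to e^{Z_\ell}$ in $C^0_{loc}(\R^2\setminus\{0\})$ from \eqref{V^-_ptoV^-}. Define the rescaled function
\[
h_p(s):=f_p(\varepsilon_p^- s)=V_p^-(s)\,s^2,\qquad s\in\left[\tfrac{r_p}{\varepsilon_p^-},\tfrac{1}{\varepsilon_p^-}\right),
\]
where the second equality uses the definition of $\varepsilon_p^-$ in \eqref{epsilon+-}. By \eqref{V^-_ptoV^-}, $h_p\to h$ in $C^0_{loc}((0,+\infty))$, where, by \eqref{Zell},
\[
h(s):=e^{Z_\ell(s)}s^2=\frac{2(\gamma+2)^2\,\delta^{\gamma+2}\,s^{\gamma+2}}{(\delta^{\gamma+2}+s^{\gamma+2})^2}.
\]

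Next I would study $h$: it vanishes at $s=0$ and $s\to+\infty$, is strictly positive on $(0,+\infty)$, attains its unique maximum at $s=\delta$ with value $h(\delta)=\tfrac{(\gamma+2)^2}{2}=\tfrac{2\ell^2+4}{2}=\ell^2+2$ by \eqref{varie}, and is strictly increasing on $(0,\delta)$ and strictly decreasing on $(\delta,+\infty)$. Hence I can choose $K>\max\{\delta,1/\delta\}$ large enough that
\[
h(1/K)<1\quad\text{and}\quad h(K)<1,
\]
and by the $C^0_{loc}$-convergence of $h_p$ at the two fixed points $s=1/K$ and $s=K$, there exists $p_K>1$ such that for $p\geq p_K$
\[
f_p(\varepsilon_p^-/K)=h_p(1/K)\leq 2\quad\text{and}\quad f_p(\varepsilon_p^- K)=h_p(K)\leq 2.
\]

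The last step is to exploit Lemma~\ref{lemma:unico_pto_max_fp_in-}: $f_p$ has a unique critical (maximum) point $d_p\in(r_p,1)$, is strictly increasing on $(r_p,d_p)$ and strictly decreasing on $(d_p,1)$. If I can show that $d_p\in(\varepsilon_p^-/K,\varepsilon_p^- K)$ for $p$ large, then monotonicity immediately yields
\[
\max_{r\in[r_p,\varepsilon_p^-/K]}f_p(r)=f_p(\varepsilon_p^-/K)\leq 2,\qquad
\max_{r\in[\varepsilon_p^- K,1]}f_p(r)=f_p(\varepsilon_p^- K)\leq 2,
\]
since $f_p(r_p)=0=f_p(1)$, which is exactly the claim. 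To localize $d_p$ I would argue that $h_p(\delta)\to h(\delta)=\ell^2+2>2$, so for $p$ large $h_p(d_p/\varepsilon_p^-)=f_p(d_p)\geq h_p(\delta)>2$, whereas the estimates above give $h_p<2$ outside $[1/K,K]$; hence $d_p/\varepsilon_p^-\in(1/K,K)$ necessarily (also using $r_p/\varepsilon_p^-\to 0$ from \eqref{quindi:rapportoepsilon+-} so that $1/K$ lies in the admissible range of $h_p$ for $p$ large). The only mildly delicate point is this localization of $d_p$, but once $h_p\to h$ uniformly on $[1/K,K]$ and $h$ has a unique strict maximum there, it follows without additional difficulty.
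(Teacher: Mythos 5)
Your proposal is correct and follows essentially the same route as the paper: rescale by $\varepsilon_p^-$, use the convergence \eqref{V^-_ptoV^-} to get $h_p\to h$ with $h(\delta)=\ell^2+2$, choose $K$ so that $h$ is small at $1/K$ and $K$, localize the unique maximum point $d_p$ of Lemma \ref{lemma:unico_pto_max_fp_in-} inside $(\varepsilon_p^-/K,\varepsilon_p^- K)$ by comparing $h_p(\delta)$ with the values at the endpoints, and conclude by monotonicity. The localization step you flag as "mildly delicate" is handled in the paper exactly as you sketch it, so nothing is missing.
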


\

\begin{proof}
We set, for $s\in(\frac{r_p}{\ep^-_p},\frac{1}{\ep^-_p})$, $h_p(s):=f_p(\ep^-_p s)$. Then, by definition of $\ep^-_p$, and \eqref{V^-_ptoV^-} we obtain:
\begin{equation}\label{hptoh}
h_p(s)=V_p^-(s)s^2\underset{p\rightarrow +\infty}{\longrightarrow} V^-(s)s^2=\frac{2(\gamma+2)^2\delta^{\gamma+2}s^{\gamma+2}}{(\delta^{\gamma+2}+s^{\gamma+2})^2}=:h(s)\quad\mbox{in $C^0_{loc}((0,+\infty))$},
\end{equation}
where the positive constants $\gamma$ and $\delta$ are as in \eqref{varie}.\\
Observe that for the function $h$ it holds: $h>0$ in $(0,\infty)$, $h(s)\rightarrow 0$ as  $s\to0^+$,  $h(s)\rightarrow 0$ as $s\rightarrow +\infty$, it has a unique strict maximum at  $s=\delta$ with $h(\delta)=\ell^2+2 >51$ (see \eqref{varie} for the definition and the value of $\ell$) and it is strictly increasing for $s<\delta$ and strictly decreasing for $s>\delta$.\\
Hence there exists $K>0$ sufficiently large such that $\frac{1}{K}<\delta<K$ and $h(s)\leq1$ for any $s\in(0,\frac{1}{ K}]\cup[K,+\infty)$. Moreover, by \eqref{hptoh}
\begin{equation}\label{convggg}
h_p\rightarrow h\ \mbox{ in }[\frac{1}{K},K]\quad\mbox{uniformly.}
\end{equation}
hence in particular there exists $p_{K}>1$ such that for $p\geq p_{K}$
\begin{eqnarray}
&& f_p(\frac{\varepsilon_p^-}{K})=h_p(\frac{1}{K})\leq h(\frac{1}{ K})+1\leq 2 \label{condizioni11}\\
&& f_p(\varepsilon_p^- \delta)=h_p(\delta)\geq h(\delta)-1= \ell^2+1 >50 \label{condizioni22}\\
&& f_p(\varepsilon_p^-K)=h_p(K)\leq h(K)+1\leq 2  \label{condizioni33}
\end{eqnarray}
But $[\frac{\varepsilon_p^-}{K},\varepsilon_p^-K]\subset [r_p,1]$ for $p$ sufficiently large (since $\varepsilon_p^-\rightarrow 0$ and $\frac{r_p}{\varepsilon_p^-}\rightarrow 0$ by \eqref{quindi:rapportoepsilon+-}) and by Lemma \ref{lemma:unico_pto_max_fp_in-} we know that in $[r_p,1]$ the function $f_p$ has a unique maximum point $d_p$ and that it is strictly increasing for $r<d_p$ and strictly decreasing for $r>d_p$. Hence  \eqref{condizioni11}-\eqref{condizioni22}-\eqref{condizioni33} necessarily imply that for $p$ large $d_p\in (\frac{\varepsilon_p^-}{K},\varepsilon_p^-K)$ and
\[
\begin{array}{lr}
f_p(r)\leq f_p(\frac{\varepsilon_p^-}{K}) \stackrel{\eqref{condizioni11}}\leq 2\qquad \mbox{ for }\ r\in [r_p,\frac{\varepsilon_p^-}{ K}]\\
f_p(r)\leq f_p(\varepsilon_p^-K)\stackrel{\eqref{condizioni33}}\leq 2 \qquad \mbox{ for }\ r\in [\varepsilon_p^-K, 1]
\end{array}
\]
from which the conclusion follows.
%
%
%
%
%
%
%
%
%
%
\end{proof}

\

\subsection{Proof of Proposition \ref{prop:m>0}}

\begin{proof}[Proof of Proposition \ref{prop:m>0}]
By Theorem \ref{lemma:betastar} we know that the value $-1$ coincides with the first radial eigenvalue $\widetilde{\beta^*}$ of the limit weighted operator $\widetilde L^*$. Hence  by evaluating the Rayleigh quotient related to the variational characterization \eqref{defbetastar} of  $\widetilde{\beta^*}$
on the functions $\widehat{\phi_p^+}$ defined in \eqref{fiCappuccio} we get
\begin{eqnarray}\label{stimadalbassoparzialeN=2}
-1 \stackrel{\mbox{\scriptsize{Theorem \ref{lemma:betastar}}}}{=} \widetilde\beta^*
&\stackrel{\eqref{defbetastar}}{\leq} &  \int_{\mathbb R^N}\left(|\nabla \widehat{\phi_{p}^+}(x)|^2-V^+(x)\widehat{\phi_{p}^+}(x)^2\right) dx
\nonumber
\\
&\stackrel{\eqref{eq cappuccio pN=2}+\eqref{normaPesata1}}{=}&\widetilde\beta_1(p) + \int_{\frac{A_p}{\varepsilon_p^+}}\left[V_p^+(x)-V^+(x)\right]\widehat{\phi_{p}^+}(x)^2 dx
\end{eqnarray}
where the set $A_p$ is defined in \eqref{definition:Ap}, $V_p^+$ in \eqref{defVp} and and $V^+=e^{U}$ by \eqref{V^+_ptoV^+}.
Next we estimate the term  $\int_{\frac{A_p}{\varepsilon_p^+}}\left[V_p^+(x)-V^+(x)\right]\widehat{\phi_{p}^+}(x)^2 dx$.\\
Let $\varepsilon\in(0,\frac13)$ and let us fix $R \geq\frac{8}{\sqrt{\varepsilon}}$:
\begin{eqnarray}\label{interm}
\int_{\frac{A_p}{\varepsilon_p^+}}\left[V_p^+(x)-V^+(x)\right]\widehat{\phi_{p}^+}(x)^2 dx
&\leq &
\int_{\frac{A_p}{\varepsilon_p^+}\cap\{|x|\leq R\}}\left|V_p^+(x)-V^+(x)\right|\widehat{\phi_{p}^+}(x)^2 dx
+
\int_{\frac{A_p}{\varepsilon_p^+}\cap\{|x|> R\}}V^+(x)\widehat{\phi_{p}^+}(x)^2 dx
\nonumber \\
&& + \int_{\frac{A_p}{\varepsilon_p^+}\cap\{|x|> R\}}V_p^+(x)\widehat{\phi_{p}^+}(x)^2 dx
\nonumber\\
&=&
I_p \ + \ II_p\ + \ III_p.
\end{eqnarray}
For the term $I_p$ we may use the convergence result in  \eqref{V^+_ptoV^+}, so there exists $p_R>1$ such that for any $p\geq p_R$
\begin{eqnarray*}
I_p &= & \int_{\frac{A_p}{\varepsilon_p^+}\cap\{|x|\leq R\}} \left|V_p^+(x)-V^+(x)\right| \widehat{\phi_{p}^+}(x)^2 dx
\leq \sup_{B_R(0)}\left|V_p^+(x)-V^+(x)  \right|R^2\int_{\mathbb R^2}
\frac{\widehat{\phi_{p}^+}(x)^2}{|x|^2} dx
\\
&\stackrel{\eqref{normaPesata1}}{=} &  \sup_{B_R(0)}\left|V_p^+(x)-V^+(x)  \right|R^2
\stackrel{\eqref{V^+_ptoV^+}}{\leq }\varepsilon.
\end{eqnarray*}
Moreover for any $p>1$ and by our choice of $R$:
\begin{eqnarray*}
II_p &=& \int_{\frac{A_p}{\varepsilon_p^+}\cap\{|x|> R\}} e^{U(x)}|x|^2\frac{\widehat{\phi_{p}^+}(x)^2}{|x|^2} dx
 \leq \sup_{|x|>R} \left(e^{U(x)}|x|^2 \right)\int_{\frac{A_p}{\varepsilon_p^+}\cap\{|x|> R\}} \frac{\widehat{\phi_{p}^+}(x)^2}{|x|^2} dx
 \\
 &\stackrel{\eqref{U}}{\leq} & \frac{64}{R^2} \int_{\R^2}\frac{\widehat{\phi_{p}^+}(x)^2}{|x|^2} dx\stackrel{\eqref{normaPesata1}}{=}\frac{64}{R^2} \leq \varepsilon.
\end{eqnarray*}
We turn now to the estimate of $III_p$ for which we will need Proposition \ref{proposition:bound_fp_+} and Proposition \ref{proposition:bound_fp_-}. Let $K>1$ be as in Proposition \ref{proposition:bound_fp_-}.
First observe that  by \eqref{quindi:rapportoepsilon+-} there exists $p_{R,K}>1$ such that
\begin{equation}\label{ancheRegioPosi}
R\varepsilon_p^+<r_p< \frac{\varepsilon_p^-}{K} \ \mbox{ for } \ p\geq  p_{R,K}.
\end{equation}
Thus for  $p\geq\max\{p_{R,K}, p_{\epsilon}, p_K\}$ where $p_{\varepsilon}$ is as in Proposition \ref{proposition:bound_fp_+}
 and $p_K$ is as in Proposition \ref{proposition:bound_fp_-}, we have
\begin{eqnarray*}
III_p &=& \int_{\frac{A_p}{\varepsilon_p^+}\cap\{|x|> R\}} V_p^+(x)\widehat{\phi_{p}^+}^2(x) dx\\
&=&\int_{A_p\cap\{ |y|>\varepsilon_p^+ R\}}p|u_p(y)|^{p-1}|y|^2\frac{\phi_p^2(y)}{|y|^2}dy
\\
&\stackrel{\eqref{f_p}}{=} &   \int_{A_p \cap\{ |y|>\varepsilon_p^+ R\}}f_p(|y|)\frac{\phi_{p}(y)^2}{|y|^2} dy
\\
&\stackrel{\eqref{ancheRegioPosi}}{=} &  \int_{\{R\varepsilon_p^+\leq |y|\leq \frac{\varepsilon_p^-}{K}\}\cup \{K \varepsilon_p^- \leq |y|\leq 1\}}
f_p(|y|)\frac{\phi_{p}(y)^2}{|y|^2} dy\\
&&\qquad\qquad\qquad\qquad
+
\int_{\{\frac{\varepsilon_p^-}{K}\leq |y|\leq K \varepsilon_p^-\}}f_p(|y|)\frac{\phi_{p}(y)^2}{|y|^2} dy
\\
&\stackrel{\eqref{normalizzazione}}{\leq} &
\max_{\{R\varepsilon_p^+\leq r\leq \frac{\varepsilon_p^-}{K}\}\cup \{K \varepsilon_p^- \leq r\leq 1\} }
 f_p(r)
\ +\
\int_{\{\frac{\varepsilon_p^-}{K}\leq |y|\leq K \varepsilon_p^-\}}f_p(|y|) \frac{\phi_{p}(y)^2}{|y|^2} dy
\\
&\stackrel{\scriptsize \begin{array}{cr} \mbox{ Propositions \ref{proposition:bound_fp_+}-\ref{proposition:bound_fp_-}}\\ \eqref{Q3}\end{array}}{\leq} & 2
\ +\ \varepsilon \ +\ C\int_{\{\frac{\varepsilon_p^-}{K}\leq |y|\leq K \varepsilon_p^-\}}  \frac{\phi_{p}(y)^2}{|y|^2} dy.
\end{eqnarray*}
Finally combining the estimates of $I_p, II_p$ and $III_p$  with \eqref{interm} we get, for $p$ sufficiently large,
\[
\int_{\frac{A_p}{\varepsilon_p^+}}\left[V_p^+(x)-V^+(x)\right]\widehat{\phi_{p}^+}(x)^2 dx \leq  2+3\ep+C\int_{\{|y|\in[\frac{\ep^-_p}{K},K\ep^-_p]\}}\frac{{\phi_{p}}(y)^2}{|y|^2} dy
\]
and so by \eqref{stimadalbassoparzialeN=2} we get that for $p$ sufficiently large
\begin{equation}\label{IVp}
\widetilde\beta_1(p)\geq  -3- 3\ep -C\int_{\{|y|\in[\frac{\ep^-_p}{K},K\ep^-_p]\}}\frac{\phi_{p}(y)^2}{|y|^2} dy\stackrel{\varepsilon<\frac13}{\geq}-4-C\int_{\{|x|\in[\frac{1}{K},K]\}}\frac{{\widehat{\phi_{p}^-}(x)}^2}{|x|^2} dx,
\end{equation}
where for the last inequality we also did a change of variable.
Now, if by contradiction
\[\liminf\limits_{p\to+\infty}\int_{\{|x|\in[\frac{1}{K},K]\}}\frac{{\widehat{\phi_{p}^-}(x)}^2}{|x|^2} dx=0,\] then by \eqref{IVp} we would get
$\limsup\limits_{p\to+\infty}\widetilde\beta_1(p)>-4$ which is impossible by Lemma \ref{lemma:primastimabetapN=2}.
\end{proof}

\

\subsection{Proof of Theorem \ref{thm:betaplimiteN=2}}

\

\

We are finally ready to prove Theorem \ref{thm:betaplimiteN=2}.
\begin{proof}[Proof of Theorem \ref{thm:betaplimiteN=2}]
Let us consider the scaled functions $\widehat{\phi_{p}^-}$ defined in \eqref{fiCappuccio}.
For any fixed $\rho\in C^{\infty}_0(\R^2\setminus\{0\})$ we have for $p$ sufficiently large that $supp \rho\subset A_p$ and so by \eqref{eq cappuccio pN=2}
\begin{equation}\label{equaziocappuccio}
\int_{\R^2 \setminus\{0\}}\nabla \widehat{\phi_{{p}}^-}(x)\nabla\rho(x)\, dx
-
\int_{\R^2 \setminus\{0\}}V_{p}^-(x)\widehat{\phi_{{p}}^-}(x)\rho(x)\, dx
-
\widetilde{\beta_1}({p})\int_{\R^2 \setminus\{0\}}\frac{\widehat{\phi_{{p}}^-}(x)\rho(x)}{|x|^2}\, dx=0.
\end{equation}
We want to pass to the limit as $p\rightarrow +\infty$ into \eqref{equaziocappuccio}.
By Lemma \ref{lemma:boundrescalatefi} we know that $\widehat{\phi_{p}^-}$ is bounded in $D_{rad}(\R^2)$, hence there exists $\widehat{\phi}\in D_{rad}(\R^2)$ such that up to a subsequence
\[\widehat{\phi_{p}^-}\rightharpoonup \widehat{\phi} \qquad \mbox{ in  } D_{rad}(\R^2) \quad\mbox{ as $p\rightarrow +\infty$}\]
and so by the continuous embedding of $D_{rad}(\R^2)$ into $D^{1,2}_{rad}(\R^2)$ and $L^2_{\frac{1}{|x|^2}}(\R^2)$ respectively also
\begin{eqnarray}
&&\widehat{\phi_{p}^-}\rightharpoonup \widehat{\phi} \qquad \mbox{ in  } D^{1,2}_{rad}(\R^2)\label{weakD}\\
 && \widehat{\phi_{p}^-}\rightharpoonup \widehat{\phi} \qquad \mbox{ in  } L^2_{\frac{1}{|x|^2}}(\R^2).\label{weakPeso}
 \end{eqnarray}
Moreover for any bounded set $M\subset \R^2$, by the compact embedding $H^1(M)\subset L^2(M)$ we have
\begin{equation}\label{M}\widehat{\phi_{p}^-}\rightarrow \widehat{\phi} \qquad \mbox{ in  } L^{2}(M)
\end{equation}
and so also
\begin{equation}\label{limiteae} \widehat{\phi_{p}^-}\rightarrow \widehat{\phi}\qquad a.e. \mbox{ in }\R^2.
\end{equation}
Observe that by \eqref{limiteae} $\widehat{\phi}\geq 0$.
Next we show that
\begin{equation}\label{finonnulla}\widehat{\phi}\not\equiv 0.
\end{equation}
Indeed by Proposition \ref{prop:m>0} there exists $K>1$ such that
\begin{equation}\label{BellaCondizioneN=2}
\liminf\limits_{p\to+\infty}\int_{\{|x|\in[\frac{1}{K},K]\}}\frac{\widehat{\phi_{p}^-}(x)^2}{|x|^2} dx
=:m>0.
\end{equation}
Hence taking $M=\{|x|\in[\frac{1}{K},K]\}$, by \eqref{M} we have
\[\int_{\{|x|\in[\frac{1}{K},K]\}}\frac{\widehat{\phi_{p}^-}(x)^2}{|x|^2} dx\leq K^2  \int_{\{|x|\in[\frac{1}{K},K]\}}\widehat{\phi_{p}^-}(x)^2 dx \longrightarrow K^2  \int_{\{|x|\in[\frac{1}{K},K]\}}\widehat{\phi}(x)^2 dx\quad\mbox{ as $p\rightarrow +\infty$}\]
and so combining this with \eqref{BellaCondizioneN=2} we get
\[ \int_{\{|x|\in[\frac{1}{K},K]\}}\widehat{\phi}(x)^2 dx\geq \frac{m}{K^2} >0,\]
thus proving \eqref{finonnulla}.
\\
We pass to the limit as $p\rightarrow +\infty$ into \eqref{equaziocappuccio} as follows:
by Lemma \ref{lemma:beta1pbounded} and Lemma \ref{lemma:primastimabetapN=2} there exists $\widetilde\beta_1< 0$ such that up to a subsequence
\[
\widetilde\beta_1(p)\to \widetilde\beta_1\qquad\mbox{as $p\to+\infty$.}
\]

By \eqref{weakD}
\[
\int_{\R^2 \setminus\{0\}}\nabla \widehat{\phi_{{p}}^-}(x)\,\nabla\rho(x)\ dx
\ \rightarrow
\int_{\R^2 \setminus\{0\}}\nabla \widehat{\phi}(x)\,\nabla\rho(x)\ dx \ \ \mbox{ as }\ p\rightarrow +\infty.
\]
By \eqref{weakPeso}
\begin{equation}\label{convdeboleL2peso}
\int_{\R^2 \setminus\{0\}}\frac{\widehat{\phi_{{p}}^-}(x)\,\rho(x)}{|x|^2}dx\rightarrow \int_{\R^2 \setminus\{0\}}\frac{\widehat{\phi}(x)\,\rho(x)}{|x|^2}dx\ \ \mbox{ as }\ p\rightarrow +\infty.
\end{equation}
Last we show that
\[
\int_{\R^2 \setminus\{0\}}V_{p}^-(x)\,\widehat{\phi_{{p}}^-}(x)\,\rho(x)\ dx\ \rightarrow  \int_{\R^2 \setminus\{0\}}V^-(x)\,\widehat{\phi}(x)\,\rho(x)\ dx\ \ \mbox{ as }\ p\rightarrow +\infty,
\]
 indeed:
\begin{eqnarray*}
&&\left|\int_{\R^2 \setminus\{0\}}V_{p}^-(x)\,\widehat{\phi_{{p}}^-}(x)\,\rho(x)\ dx\ - \int_{\R^2\setminus\{0\}}V^-(x)\,\widehat{\phi}(x)\,\rho(x)\ dx\right|\leq
\\
&&
\qquad \leq \sup_{\rm{supp } (\rho)} \left( |x|^2|V_{p}^-(x)-V^-(x)| \right) \int_{\R^2 \setminus\{0\}} \frac{\widehat{\phi_{{p}}^-}(x)\,|\rho (x)|}{|x|^2}\ dx\
+ \ \left|    \int_{\R^2 \setminus\{0\}} \frac{    [\widehat{\phi_{{p}}^-}(x) - \widehat\phi(x)]\overbrace{|x|^2V^-(x) \rho(x)}^{:=\widetilde\rho(x)}}{|x|^2} dx\right|\\
&&
\qquad \leq \sup_{\rm{supp } (\rho)}\left( |x|^2|V^-_{p}(x)-V^-(x)|\right) \ C_{\rho}\ \| \frac{\widehat{\phi_{{p}}^-}}{|x|}\|_{L^2(\R^2)} \
+ \  \left|    \int_{\R^2 \setminus\{0\}} \frac{    [\widehat{\phi_{{p}}^-}(x) - \widehat\phi(x)]\widetilde\rho (x)}{|x|^2} dx\right|\\
&&\qquad\longrightarrow 0 \ \ \ \ \ \mbox{ as $p\rightarrow +\infty$,}
\end{eqnarray*}
where for the first term we have used the convergence result in \eqref{V^-_ptoV^-} and the bound in \eqref{normaPesata1}, while for the second term the convergence follows from \eqref{convdeboleL2peso} since $\widetilde\rho:=\rho |x|^2V^-(x) \in C^{\infty}_0(\R^2\setminus\{0\})$.\\
As a consequence by passing to the limit into \eqref{equaziocappuccio} we get
\begin{equation}
\int_{\R^2 \setminus\{0\}} \nabla \widehat{\phi}(x)\, \nabla\rho(x)\ dx
-
\int_{\R^2 \setminus\{0\}} V^-(x)\,\widehat{\phi}(x)\, \rho(x)\ dx
-
\widetilde{\beta_1} \int_{\R^2 \setminus\{0\}}
\frac{\widehat\phi(x)\,\rho(x)}{|x|^2}\ dx  =0,
\end{equation}
for any $\rho\in C^{\infty}_0 (\R^2\setminus\{0\})$,
namely $\widehat\phi$ is a (weak and so classical) nontrivial nonnegative solution to the limit equation
\[
-\widehat{\phi}''(s)-\frac{\widehat{\phi}'(s)}{s}-V^-(s)\widehat{\phi}(s)=\widetilde\beta_1\frac{\widehat{\phi}(s)}{s^2}\qquad s\in(0,+\infty),
\]
where $V^-(s)=\frac{2(\gamma+2)^2\delta^{\gamma+2}s^{\gamma}}{(\delta^{\gamma+2}+s^{\gamma+2})^2}$ is the function given by the convergence result in \eqref{V^-_ptoV^-}.\\
Reasoning as in \cite{GladialiGrossiNeves} and setting, for $s\in(0,+\infty)$, $\eta(s):=\widehat{\phi}(\delta (\frac{s}{2\sqrt2})^{\frac2{2+\gamma}})$ we then have that $\eta$ satisfies
\[
-\eta''(s)-\frac{\eta'(s)}{s}-\frac{1}{(1+\frac18 s^{2})^2}\eta(s)=\frac{4\widetilde\beta_1}{(\gamma+2)^2}\frac{\eta(s)}{s^2}\qquad s\in(0,+\infty).
\]
with $\frac{4\widetilde\beta_1}{(\gamma+2)^2}<0$ and thus by Proposition \ref{unicheSoluzioniDiEquazioneLimite}
\[\frac{4\widetilde\beta_1}{(\gamma+2)^2}=-1.\]
Hence the definition of $\gamma$ in \eqref{varie} implies
\[
\widetilde\beta_1=-\frac{\ell^2+2}2.
\]
The assertion follows considering the approximated value of $\ell\approx 7.1979$ (see \eqref{varie}).
\end{proof}

\

\

\

\section{Proof of Theorem \ref{teoPrincipale}}\label{section:ProofMain}

\

This section is devoted to the proof of Theorem \ref{teoPrincipale}.

\

\begin{proof}

As already done in Section \ref{section:analysis} (see \eqref{np}) we set for $p\in (1,+\infty)$
\[
n_p:=\max\{ n'_p, n''_p,[(\ep_p^+)^{-2}]+1\}\qquad\mbox{and}\qquad\widetilde{\beta_i}(p):=\widetilde\beta_i^{n_p}(p)\quad\mbox{for any $i\in\mathbb N^+$.}
\]
By Proposition \ref{proposition:MorseRadialeConPeso} a) to determine $m(u_p)$ is equivalent to count the number  $\widetilde{k_p^{n_p}}$ of the negative eigenvalues $\widetilde{ \mu_i}^{n_p}(p)$ of the operator $\widetilde{L_p^{n_p}}$ defined in \eqref{weightedOp}.
\\
Hence it is enough to show that
\begin{equation}\label{tesissima}
\widetilde{k_p^{n_p}}=12 \qquad\mbox{for $p$ sufficiently large.}
\end{equation}
From now on we simplify the notation as follows
\[\widetilde{\mu_i}(p):=\widetilde\mu_i^{n_p}(p)\quad\mbox{for any $i\in\mathbb N^+$.}
\]
By Lemma \ref{lemma:decompositionOfTheSpectrum} we have that
\begin{equation}
\sigma(\widetilde{L_p^{n_p}})=\sigma(\widetilde{L_{p,}^{n_p}}_{rad})+\sigma(-\Delta_{S^{1}})
\end{equation}
namely
the eigenvalues $\widetilde{\mu_j}(p)$ of $\widetilde{L_p^{n_p}}$ are given by
\begin{equation}\label{decomposizioneAutovaloriCasoMio} \widetilde{\mu_j}(p)\ =\ \widetilde{\beta_i}(p)\ +\ \lambda_k, \ \ \mbox{ for } i,j=1,2,\ldots,\ \ k=0,1, \ldots
\end{equation}
where  $\widetilde{\beta_i}(p)$, $i=1,2,\ldots$ are the eigenvalues of the {\emph{radial}} operator $\widetilde{L_p^{n}}_{rad}$ and $\lambda_k$, $k=0,1,\ldots$ are the eigenvalues of the Laplace-Beltrami operator $-\Delta_{S^{1}}$ on the unit sphere $S^{1}$. Recall that
\[\lambda_k=k^2 \ (\geq 0),\ \ k=0,1, \ldots \]
and that the eigenspace associated to $\lambda_0$ has dimension $1$ while the eigenspace associated to $\lambda_k$ has dimension $2$ (see \eqref{lambdak} and \eqref{multiplicity}).

By Corollary \ref{Cor:hpmorsetradotte}-b) we know that
$\widetilde{\beta_1}(p)\leq\widetilde{\beta_2}(p)<0\leq\widetilde{\beta_3}(p)<\ldots$, then
\[\widetilde{\beta_i}(p)\ +\ \lambda_k\ \geq 0 \ \ \mbox{ for  }i=3,4,\ldots, \ \mbox{ and }\ k=0,1,\ldots,\]
namely $\widetilde{\beta_i}(p)$, $i=3,4,\ldots $ do not give any contribution to the Morse index.

\

Next we study the remaining cases $\widetilde{\beta_i}(p)$, $i=1,2$.

\

About $\widetilde{\beta_2}(p)$, by Proposition \ref{LemmaStimeAutovaloriRadialeConPeso Iparte} we know that $\ \widetilde{\beta_2}(p)> -1\ $
and this implies that
\[\widetilde{\beta_2}(p)\ +\ \lambda_h\ >0 \ \ \mbox{ for }\ h=1,2,\ldots\]
while from Corollary \ref{Cor:hpmorsetradotte}-b) we have
\begin{equation}\label{primoautoneg}
\widetilde{\beta_2}(p)+\lambda_0=\widetilde{\beta_2}(p)<0.
\end{equation}
This gives one negative eigenvalue of $\widetilde{L_p^{n_p}}$
recalling that $\lambda_0=0$ has multiplicity $1$.

\

Let us now consider $\widetilde{\beta_1}(p).$
%
%

By Theorem \ref{thm:betaplimiteN=2} we know that
\[
\widetilde{\beta_1}(p)\rightarrow  -\frac{\ell^2+2}{2}\simeq -26.9 \qquad\mbox{ as }\ p\rightarrow +\infty,
\]
where $\ell$ is defined in \eqref{varie}. Therefore, for $p$ large
\[-\lambda_6=-36<\widetilde{\beta}_1(p)<-25=-\lambda_5\]
and as a consequence
\[
\widetilde{\beta_1}(p)\ +\ \lambda_k\ >0,\quad \ \ k=6,7,\ldots\\
\]
while
\begin{equation}\label{altriTANTIeigenvalues}
\widetilde{\beta_1}(p)\ +\ \lambda_k\ <0,\quad \ \ k=0,1,2,3,4,5.\\
\end{equation}
We know that the multiplicity of $\lambda_k$ is $1$ when $k=0$ and it is $2$ when $k\neq 0$, hence \eqref{altriTANTIeigenvalues} gives
$11$
negative eigenvalues of $\widetilde{L_p^{n_p}}$ (the first of them is equal to $\widetilde{\beta_1}(p)$ and it is the first radial eigenvalue).
By combining this with \eqref{primoautoneg}  we hence get
\[\widetilde{k_p^{n_p}}=12\quad\mbox{for $p$ large}\]
and this concludes the proof.
\end{proof}

\

\

\

\appendix
\section*{Appendix}
\setcounter{section}{1}

\

\begin{lemma}\label{lemma:appendixN3}
Let $N\geq3$ and $\eta\in C^2( \R^N\setminus\{0\})\cap D_{rad}(\R^N)$, then:
\[
|x|^{N-1}\eta(x)\to0\quad\textrm{as $|x|\to0$}\qquad\textrm{and}\qquad\frac{\eta(x)}{|x|}\to0\quad\textrm{as $|x|\to+\infty$}.
\]
\end{lemma}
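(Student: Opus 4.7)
The plan is to exploit the fact that for $N\geq 3$ the space $D_{rad}(\R^N)$ coincides (by Hardy's inequality) with $D^{1,2}_{rad}(\R^N)$, so the only quantitative information on $\eta$ is the finiteness of the radial Dirichlet integral
\[
\int_0^{+\infty}|\eta'(r)|^2\,r^{N-1}\,dr<+\infty,
\]
together with the $C^2$ regularity on $(0,+\infty)$. Both limits will be estimated via the fundamental theorem of calculus combined with a Cauchy--Schwarz inequality with the natural weight $r^{N-1}$; what makes things work is the integrability of $r^{-(N-1)}$ near infinity (which is the place where $N\geq 3$ is used).

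For the behaviour at infinity, I would first show that $\eta(r)$ has a limit as $r\to+\infty$ and that this limit is zero. Writing, for $r_1<r_2$,
\[
|\eta(r_2)-\eta(r_1)|\leq\left(\int_{r_1}^{r_2}|\eta'(s)|^2 s^{N-1}\,ds\right)^{1/2}\left(\int_{r_1}^{r_2} s^{-(N-1)}\,ds\right)^{1/2}\leq \frac{C}{r_1^{(N-2)/2}}\left(\int_{r_1}^{+\infty}|\eta'|^2 s^{N-1}\,ds\right)^{1/2},
\]
the right-hand side is infinitesimal as $r_1\to+\infty$ uniformly in $r_2\geq r_1$, so $\eta(r)$ is Cauchy and converges to some limit $L$; since $\eta\in D^{1,2}(\R^N)\hookrightarrow L^{2^*}(\R^N)$, i.e.\ $\int_0^{+\infty}|\eta(r)|^{2^*}r^{N-1}\,dr<+\infty$, necessarily $L=0$. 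Letting $r_2\to+\infty$ in the inequality above then yields
\[
|\eta(r)|\leq \frac{C}{r^{(N-2)/2}}\,\omega(r), \qquad \omega(r):=\left(\int_r^{+\infty}|\eta'|^2 s^{N-1}\,ds\right)^{1/2}\to 0,
\]
and dividing by $r$ gives $|\eta(r)|/r \leq C\,r^{-N/2}\omega(r)\to 0$, which is the desired decay at infinity.

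For the behaviour near zero I would apply the same Cauchy--Schwarz trick on the interval $[r,1]$: for $0<r<1$,
\[
|\eta(r)-\eta(1)|\leq \left(\int_r^1 |\eta'(s)|^2 s^{N-1}\,ds\right)^{1/2}\left(\int_r^1 s^{-(N-1)}\,ds\right)^{1/2}\leq \frac{C}{\sqrt{N-2}}\,r^{-(N-2)/2}\|\eta'\|_{L^2(r^{N-1})},
\]
so that
\[
r^{N-1}|\eta(r)|\leq r^{N-1}|\eta(1)|+\frac{C}{\sqrt{N-2}}\,r^{N-1-(N-2)/2}\,\|\eta'\|_{L^2(r^{N-1})}=O(r^{N-1})+O(r^{N/2}),
\]
both of which vanish as $r\to 0^+$ because $N\geq 3$ implies $N/2>0$.

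No step looks really hard: the only point that requires a moment of care is the justification that $\eta$ actually has limit $0$ at infinity (not merely a limit), which is why I invoke the Sobolev embedding $D^{1,2}(\R^N)\hookrightarrow L^{2^*}(\R^N)$ (available because $N\geq 3$). Everything else is a routine Cauchy--Schwarz computation with the weight $r^{N-1}$, and the exponents $N/2$ and $(N-2)/2$ come out strictly positive precisely in the regime $N\geq 3$ we are working in.
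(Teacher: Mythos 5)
Your proof is correct, but it follows a genuinely different route from the paper. The paper's argument is: for the behaviour at the origin, introduce the Kelvin transform $w(x)=|x|^{2-N}\eta(x/|x|^2)$, check by an explicit computation that $w\in D^{1,2}_{rad}(\R^N)$, and apply the Strauss radial lemma to $w$, which gives $|x|^{N-1}\eta(x)=|x|\,w(x/|x|^2)\leq C|x|^{N/2}$; for the behaviour at infinity it applies the Strauss lemma directly to $\eta$, obtaining $\eta(x)/|x|\leq C|x|^{-N/2}$. You instead avoid the Kelvin transform altogether and essentially re-prove the radial lemma by hand: the fundamental theorem of calculus plus Cauchy--Schwarz with the weight $s^{N-1}$, using the integrability of $s^{-(N-1)}$ at infinity (this is where $N\geq3$ enters), gives both the decay $|\eta(r)|\leq Cr^{-(N-2)/2}\omega(r)$ at infinity (after identifying the limit of $\eta$ at infinity as $0$ via the Sobolev embedding $D^{1,2}\hookrightarrow L^{2^*}$ --- you could equally well use $\int_{\R^N}\eta^2/|x|^2\,dx<\infty$, which is part of the hypothesis $\eta\in D_{rad}(\R^N)$) and the bound $r^{N-1}|\eta(r)|=O(r^{N-1})+O(r^{N/2})$ near the origin, with the same exponent $N/2$ the paper obtains. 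Your approach buys self-containedness and transparency (no Kelvin-transform Dirichlet-norm computation, no external citation of the Strauss lemma), at the cost of a slightly longer argument at infinity where the limit must first be shown to exist and be zero; the paper's approach is shorter given that it cites the radial lemma as a black box and reuses it twice via the inversion $x\mapsto x/|x|^2$.
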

\begin{proof}
Let $w$ be the Kelvin transform of $\eta$
\[
w(x):=|x|^{2-N}\eta(\frac{x}{|x|^2}),\qquad x\in\R^N\setminus\{0\}.
\]
We have that $w\in D^{1,2}_{rad}(\R^N)$, indeed
\begin{eqnarray*}
&&\int_{\R^N}|\nabla w(x)|^2 dx=\\
&=&N\omega_N\int_0^{+\infty}r^{N-1}\left[(2-N)^2r^{2-2N}\eta^2(\frac1r)+r^{-2N}\left(\eta'(\frac1r)\right)^2-2(2-N)r^{1-2N}\eta(\frac1r)\eta'(\frac1r)\right]dr\\
&=&N\omega_N\int_0^{+\infty}[(2-N)^2s^{N-1}\frac{\eta^2(s)}{s^2}+s^{N-1}(\eta'(s))^2-2(2-N)s^{\frac{N-1}{2}}\frac{\eta(s)}{s}s^{\frac{N-1}{2}}\eta'(s)]ds\\
&\leq&(2-N)^2\int_{\R^N}\frac{\eta(x)^2}{|x|^2}dx+\int_{\R^N}|\nabla\eta (x)|^2dx+2(N-2)\left(\int_{\R^N}\frac{\eta(x)^2}{|x|^2}dx\right)^{\frac12}\left(\int_{\R^N}|\nabla\eta (x)|^2dx\right)^{\frac12}<+\infty.
\end{eqnarray*}
Applying Strauss Lemma (see \cite{BerestyckiLions}) to $w$
\[
w(y)\leq\frac{C}{|y|^{\frac{N-2}{2}}}\qquad\textrm{for $y\neq 0$,}
\]
so
\begin{equation*}
|x|^{N-1}\eta(x)=|x|w(\frac{x}{|x|^2})\leq C |x|^{\frac N2}\to0\quad\textrm{as $|x|\to0$}.
\end{equation*}
On the other hand applying the Strauss Lemma directly to $\eta$ we get that in particular
\begin{equation*}
\frac{\eta(x)}{|x|}\to0\quad\textrm{as $|x|\to+\infty$}
\end{equation*}
and this concludes the proof.
\end{proof}

\

\

\begin{lemma}\label{lemma:appendixLinfinito}
Let $f\in L^{\infty}(\R^2)$, $f\geq0$ be such that $\frac1{|x|^4}f(\frac{x}{|x|^2})\in L^{\infty}(\R^2)$, let $\alpha\geq 0$ and let $\eta\in C^2( \R^2\setminus\{0\})\cap D_{rad}(\R^2)$,  $\eta\geq0$ be a radial nontrivial solution of
\begin{equation}
 -\lap \eta(x)-f(x)\eta(x)=-\alpha^2\frac{\eta}{|x|^2}\qquad x\in \R^2\setminus\{0\}
\end{equation}
Then
\[
|x|\eta(x)\to0\quad\textrm{as $|x|\to0$}\qquad\textrm{and}\qquad\frac{\eta(x)}{|x|}\to0\quad\textrm{as $|x|\to+\infty$}.
\]
\end{lemma}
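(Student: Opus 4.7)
The plan is to reduce the two conclusions to the single stronger assertion that $\eta(r)\to 0$ as $r\to 0^+$ and as $r\to +\infty$. Once these endpoint limits are in hand, the two desired statements follow at once, since $r\,\eta(r)$ and $\eta(r)/r$ each become a product of a bounded factor and a vanishing one.

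The key step will be to obtain the endpoint decay of $\eta$ from the sole membership $\eta\in D_{rad}(\R^2)$. Passing to radial coordinates, this amounts to
\begin{equation*}
\int_0^{+\infty} r\,|\eta'(r)|^2\,dr<+\infty \quad\text{and}\quad \int_0^{+\infty}\frac{\eta(r)^2}{r}\,dr<+\infty.
\end{equation*}
For any $0<a<b$ I would then apply Cauchy--Schwarz to get
\begin{equation*}
\bigl|\eta(b)^2-\eta(a)^2\bigr|=2\left|\int_a^b \eta\,\eta'\,dt\right|\leq 2\left(\int_a^b \frac{\eta^2}{t}\,dt\right)^{1/2}\left(\int_a^b t\,|\eta'|^2\,dt\right)^{1/2},
\end{equation*}
so that as $a,b\to 0^+$ (resp.\ $a,b\to+\infty$) both factors on the right tend to zero, by the Cauchy criterion for convergent improper integrals. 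This forces $\eta^2(r)$ to admit finite nonnegative limits $L_0$ and $L_\infty$ at the two endpoints. If either, say $L_0$, were strictly positive, then $\eta^2(r)\geq L_0/2$ on a small interval $(0,\delta)$, contradicting the integrability of $\eta^2/r$ near $0$; hence $L_0=L_\infty=0$ as required.

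I do not anticipate a real obstacle in this argument; the whole content is the Cauchy--Schwarz estimate above, together with the radial continuity of $\eta$ given by $\eta\in C^2(\R^2\setminus\{0\})$. It is worth observing that the equation satisfied by $\eta$ and the assumption on $f$ do not actually seem to be needed for this direct proof. They would, however, naturally motivate an alternative Kelvin-transform approach: under $\tilde w(y):=\eta(y/|y|^2)$, the hypothesis $|y|^{-4}f(y/|y|^2)\in L^\infty(\R^2)$ ensures that $\tilde w\in D_{rad}(\R^2)$ solves an equation of the same structural form on $\R^2\setminus\{0\}$, which would identify the behavior of $\eta$ at $+\infty$ with that of $\tilde w$ at $0$, reducing the two claims to a single one.
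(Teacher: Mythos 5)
Your proof is correct, and it takes a genuinely different (and more elementary) route than the paper. The paper proves the lemma by exploiting the equation: it integrates the ODE against suitable powers of $t$, uses the sign conditions $f\geq 0$, $\eta\geq 0$ and the finiteness of $\int_0^{+\infty}\frac{\eta^2(t)}{t}\,dt$ to select good sequences $r_n\to 0$, derives pointwise bounds near the origin of the type $\eta(s)\leq Cs^{\alpha}$, $Cs^2$, $Cs^2|\log s|$ (and only $C|\log s|$ when $\alpha=0$), and then handles the behavior at infinity via the Kelvin transform $w(s)=\eta(1/s)$ -- this is precisely where the hypothesis $\frac{1}{|x|^4}f(\frac{x}{|x|^2})\in L^{\infty}$ enters. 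Your argument instead never uses the equation: it is a radial-lemma-type estimate showing that any radial function which is $C^1$ away from the origin and satisfies $\int_0^{+\infty} t\,|\eta'|^2\,dt<+\infty$ and $\int_0^{+\infty}\frac{\eta^2}{t}\,dt<+\infty$ (exactly what membership in $D_{rad}(\R^2)$ gives, and what the paper itself extracts from it) must tend to $0$ at both endpoints: the Cauchy--Schwarz bound on $\eta^2(b)-\eta^2(a)$ plus the vanishing of the tails of the two convergent integrals yields existence of the limits of $\eta^2$, and a positive limit would contradict $\int \frac{\eta^2}{t}\,dt<\infty$ because $\int\frac{dt}{t}$ diverges at both ends. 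This is sound, suffices for the stated conclusions (and for their use in Proposition \ref{unicheSoluzioniDiEquazioneLimite}), and is even stronger than the paper's estimate near the origin in the case $\alpha=0$. What the paper's longer argument buys, and your argument does not, is quantitative decay rates in $\alpha$ and the extra information mentioned in their closing remark (e.g.\ $\eta\in L^{\infty}(\R^2)$ when $\alpha>0$); but none of that is needed for the lemma as stated, so your observation that the hypotheses on $f$, $\alpha$ and the equation are superfluous for these two limits is accurate.
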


\

\begin{proof}
The proof is inspired by \cite[Lemma 2.4]{GladialiGrossiNeves}.\\
In polar coordinates $\eta$ satisfies
\begin{equation}\label{equazioneLimiteAppendix}
 -\eta''-\frac{\eta'}{s}-f(s)\eta=-\alpha^2\frac{\eta}{s^2}\qquad s\in (0, +\infty)
\end{equation}
Let us observe that there exists $r_n\to0$ such that $r_n^{\alpha}\eta(r_n)=o(1)$ as $n\to+\infty$. This is trivial if $\alpha=0$, whereas if $\alpha>0$ such sequence does exist because, if not, we get $\eta(s)\geq\frac{C}{s^\alpha}$ in a neighborhood of $0$ and this contradicts $\int_0^{+\infty}\frac{\eta^2(s)}{s}ds<+\infty$, which holds true being $\eta\in D_{rad}(\R^2)$.\\

Let $R\in(0,1]$, using \eqref{equazioneLimiteAppendix} we have
\begin{eqnarray}\label{talpha+1}
\int_{r_n}^R t^{\alpha+1}f(t)\eta(t)\,dt&=&\int_{r_n}^R t^{\alpha+1}(-\eta''(t)-\frac{\eta'(t)}{t}+\alpha^2\frac{\eta(t)}{t^2})dt\noindent\\
&=&\int_{r_n}^R (-t^{\alpha+1}\eta'(t)+\alpha t^\alpha\eta(t))'dt\noindent\\
&=&-R^{\alpha+1}\eta'(R)+r_n^{\alpha+1}\eta'(r_n)+\alpha R^{\alpha}\eta(R)-\alpha r_n^{\alpha}\eta(r_n).
\end{eqnarray}
and since $f\in L^\infty(\R^2)$ and $\int_0^{+\infty}\frac{\eta^2(t)}{t}dt<+\infty$
\begin{equation}\label{t}
\int_{r_n}^1 t f(t)\eta(t)\,dt\leq C\int_{r_n}^1\frac{\eta(t)}{t^{\frac12}}dt
\leq C \left(\int_{r_n}^1\frac{\eta^2(t)}{t}dt\right)^{\frac12}\leq C.
\end{equation}

We now distinguish the case $\alpha>0$ from the case $\alpha=0$.

If $\alpha>0$, let us show that $r_n^{\alpha+1}\eta'(r_n)=o(1)$.
Multiplying equation \eqref{equazioneLimiteAppendix} by $t$ and integrating we get
\[
-\int_{r_n}^1\eta''(t)t\,dt=\int_{r_n}^1\eta'(t)\,dt-\alpha^2\int_{r_n}^1\frac{\eta(t)}{t}dt+\int_{r_n}^1 t f(t)\eta(t)\,dt,
\]
in the other hand integrating by parts
\[
-\int_{r_n}^1\eta''(t)t\,dt=-\eta'(1)+\eta'(r_n)r_n+\int_{r_n}^1\eta'(t)\,dt.
\]
Then
\begin{equation}\label{sonno}
-\eta'(1)+\eta'(r_n)r_n=-\alpha^2\int_{r_n}^1\frac{\eta(t)}{t}dt+\int_{r_n}^1 t f(t)\eta(t)\,dt,
\end{equation}
and multiplying by $r_n^{\alpha}$ we get
\begin{equation}\label{etaprimorn}
r_n^{\alpha+1}\eta'(r_n)=O(r_n^{\alpha})-\alpha^2 r_n^\alpha\int_{r_n}^1\frac{\eta(t)}{t}dt+r_n^\alpha\int_{r_n}^1 t f(t)\eta(t)\,dt.
\end{equation}
Since $\int_0^{+\infty}\frac{\eta^2(t)}{t}dt<+\infty$
\begin{eqnarray}\label{alpha2integrale}
r_n^\alpha\int_{r_n}^1\frac{\eta(t)}{t}dt&\leq& r_n^\alpha\left(\int_{r_n}^1\frac{\eta^2(t)}{t}dt\right)^{\frac12}\left(\int_{r_n}^1\frac 1t dt\right)^{\frac12}\nonumber\\
&\leq&r_n^\alpha C(-log(r_n))^{\frac12}
\end{eqnarray}
then by \eqref{etaprimorn}, \eqref{alpha2integrale} and \eqref{t} we get the claim: $r_n^{\alpha+1}\eta'(r_n)=o(1)$ and so in turn by \eqref{talpha+1}
\[
\int_{0}^R t^{\alpha+1}f(t)\eta(t)\,dt=-R^{\alpha+1}\eta'(R)+\alpha R^{\alpha}\eta(R).
\]
Then for any $s\in(0,1]$
\begin{eqnarray*}
\frac{\eta(s)}{s^{\alpha}}-\eta(1)&=&\int_s^1\left(-\frac{\eta'(R)}{R^{\alpha}}+\alpha\frac{\eta(R)}{R^{\alpha+1}}\right) dR\\
&=&\int_s^1\frac{1}{R^{2\alpha+1}}\left(\int_{0}^R t^{\alpha+1}f(t)\eta(t)\,dt\right) dR\\
&\leq&C\int_s^1\frac{1}{R^{2\alpha+1}}\left(\int_{0}^R t^{\alpha+\frac32}\frac{\eta(t)}{t^{\frac12}}\,dt\right) dR\\
&\leq&C\int_s^1\frac{1}{R^{2\alpha+1}}\left(\int_{0}^R t^{2\alpha+3}\,dt\right)^{\frac12}\left(\int_0^{R}\frac{\eta^2(t)}{t}dt\right)^{\frac12} dR\\
&\leq&C\int_s^1 R^{1-\alpha} dR
\end{eqnarray*}
At last
$$
\eta(s)\leq\left\{
             \begin{array}{ll}
               C s^{\alpha}, & \hbox{$\alpha<2$} \\
               C s^2 & \hbox{$\alpha>2$} \\
               C s^2|\log(s)| & \hbox{$\alpha=2$}
             \end{array}
           \right.
$$
so $s\eta(s)\to0$ as $s\to0$.

For what concerns the case $\alpha=0$, reasoning as above to derive \eqref{sonno} it is easy to see that
\[
\eta'(R)R=\int_{R}^1 t f(t)\eta(t)\,dt+\eta'(1),
\]
then for $s\in(0,1]$
\begin{eqnarray*}
\eta(s)-\eta(1)&=&-\int_s^1\eta'(R)dR=-\int_s^1\frac1R(R\eta'(R))dR\\
&=&-\int_s^1\eta'(R)dR=-\int_s^1\frac1R(\int_{R}^1 t f(t)\eta(t)\,dt+\eta'(1))dR\\
&\stackrel{f\geq0,\; \eta\geq0}{\leq}&C|\log(s)|,
\end{eqnarray*}
so also in this case $s\eta(s)\to0$ as $s\to0$.

\

Next let us consider $w(s)=\eta(\frac 1s)$. It is not hard to see that $w\in C^2( \R^2\setminus\{0\})\cap D_{rad}(\R^2)$ and it solves
 \begin{equation*}
 -w''-\frac{w'}{s}-\frac1{s^4}f(\frac1s)w=-\alpha^2\frac{w}{s^2}\qquad s\in (0, +\infty)\\
 \end{equation*}
So repeating the same reasoning as for $\eta$ and using that $\frac1{s^4}f(\frac1s)\in L^{\infty}((0,+\infty))$ we get that $sw(s)\to0$ as $s\to0$ and so $\frac{\eta(s)}s\to0$ as $s\to+\infty$ and this concludes the proof

\

It is worth to point out that actually if $\alpha>0$ the above estimates lead to a much stronger result, as for example $\eta\in L^\infty(\R^2)$.
\end{proof}

\

\begin{lemma}\label{lemma:appendixN=2}
Let $\Psi_{R,p}:A_p\to\R$ be the function defined in \eqref{psiN=2}, then
\[
\frac{\int_{A_p}|\nabla\Psi_{R,p}(y)|^2-p|\upp (y)|^{p-1}\Psi_{R,p}(y)^2 dy}{\int_{A_p}\frac{\Psi_{R,p}(y)^2}{|y|^2}dy}\leq-\frac{\ell^2+2}{2}(1+o_R(1)+o_p(1)).
\]
\end{lemma}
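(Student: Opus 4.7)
The idea is to rescale by $\ep_p^-$, split the support of $\Psi_{R,p}$ into a core annulus where the rescaled profile is $p$-independent and two affine transition annuli that contribute only $o_R(1)$, and then match the resulting limit ratio with the Rayleigh quotient of the minimizer $\eta_1$ from Theorem \ref{lemma:betastar} via the substitution $s=2\sqrt 2(r/\delta)^{(2+\gamma)/2}$ used at the end of the proof of Theorem \ref{thm:betaplimiteN=2}.

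Set $\widetilde\Psi_R(x):=\Psi_{R,p}(\ep_p^- x)$. Since by definition of $\ep_p^-$ one has $p|u_p(y)|^{p-1}=(\ep_p^-)^{-2}V_p^-(y/\ep_p^-)$, a direct change of variables rewrites the ratio in the statement as
\[
\mathcal R_{R,p}=\frac{\displaystyle\int_{A_p/\ep_p^-}\bigl[|\nabla \widetilde\Psi_R|^2-V_p^-\,\widetilde\Psi_R^2\bigr]\,dx}{\displaystyle\int_{A_p/\ep_p^-}\widetilde\Psi_R^2/|x|^2\,dx}.
\]
On the core $\mathcal C_R:=\{\delta/R\leq|x|\leq R\delta\}$ the rescaled function $\widetilde\Psi_R$ reduces to the $p$-independent profile $\widetilde\psi(x):=(|x|/\delta)^{(2+\gamma)/2}/(1+(|x|/\delta)^{2+\gamma})$; on the transition annuli $\{\delta/(2R)\leq|x|\leq\delta/R\}$ and $\{R\delta\leq|x|\leq 2R\delta\}$ it is affine, interpolating between $0$ and the boundary values $\widetilde\psi(\delta/R),\widetilde\psi(R\delta)=O(R^{-(2+\gamma)/2})$. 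Using these together with the decays $V^-(r)\sim r^{\gamma}$ as $r\to 0$ and $V^-(r)\sim r^{-(\gamma+4)}$ as $r\to\infty$, plus the universal bound $V_p^-(x)|x|^2\leq C$ inherited from \eqref{Q3}, a direct polar-coordinate estimate shows that both transition annuli contribute $O(R^{-(2+\gamma)})$ to the numerator and to the denominator of $\mathcal R_{R,p}$, uniformly in $p$. The substitution $u=(|x|/\delta)^{2+\gamma}$ then gives $\int_{\mathcal C_R}\widetilde\psi^2/|x|^2\,dx\to 2\pi/(\gamma+2)$ as $R\to\infty$, so after dividing the transition contributions amount to $o_R(1)$.

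Since $n_p\ep_p^-\to+\infty$, for $p$ large the support of $\widetilde\Psi_R$ lies inside $A_p/\ep_p^-$; combined with the uniform convergence $V_p^-\to V^-$ on $\mathcal C_R$ from \eqref{V^-_ptoV^-}, this yields
\[
\mathcal R_{R,p}=\frac{\int_{\R^2}[|\nabla \widetilde\psi|^2-V^-\widetilde\psi^2]\,dx}{\int_{\R^2}\widetilde\psi^2/|x|^2\,dx}+o_p(1)+o_R(1).
\]
To identify this limit with $-(\ell^2+2)/2$, observe that $2\sqrt 2\,\widetilde\psi(x)=\eta_1(2\sqrt 2(|x|/\delta)^{(2+\gamma)/2})$, and apply the change of variables $r=\delta(s/(2\sqrt 2))^{2/(2+\gamma)}$ in reverse: starting from $-\Delta\eta_1-V^+\eta_1=-\eta_1/|y|^2$ (Theorem \ref{lemma:betastar}), one obtains
\[
-\Delta\widetilde\psi-V^-\widetilde\psi=-\tfrac{(\gamma+2)^2}{4}\,\tfrac{\widetilde\psi}{|x|^2}=-\tfrac{\ell^2+2}{2}\,\tfrac{\widetilde\psi}{|x|^2},
\]
using $(\gamma+2)^2=2\ell^2+4$. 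The asymptotics $\widetilde\psi\sim|x|^{(2+\gamma)/2}$ at $0$ and $\widetilde\psi\sim|x|^{-(2+\gamma)/2}$ at infinity make the boundary terms vanish in the integration by parts, so multiplying this identity by $\widetilde\psi$ and integrating produces the desired value.

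The main technical hurdle is the uniform-in-$p$ bookkeeping of the transition-annulus contributions together with the explicit polar estimates using the decay of $V^-$ at $0$ and at infinity; once these are in place, the $p\to\infty$ limit is a routine application of \eqref{V^-_ptoV^-}, and the final identification is simply the ODE substitution already carried out at the end of the proof of Theorem \ref{thm:betaplimiteN=2}.
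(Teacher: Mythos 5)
Your proposal is correct and follows essentially the same route as the paper: the same splitting of the support of $\Psi_{R,p}$ into a core annulus and two affine transition annuli, the same $O(R^{-(2+\gamma)})$ control of the transition contributions (the paper simply drops the potential term there instead of invoking \eqref{Q3}), and the same passage to the limit on the core via the rescaling by $\ep_p^-$ and the convergence \eqref{V^-_ptoV^-}. The only difference is in how the limiting value is identified: the paper computes the limiting core integrals explicitly (substitution $t=1+s^{2+\gamma}$), whereas you recognize the rescaled profile as a rescaling of $\eta_1$ and integrate its eigenvalue equation by parts, which gives the same value $-\frac{\ell^2+2}{2}$.
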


\

\begin{proof}
We set
\[\begin{array}{lr}
N_p:=\int_{A_p}|\nabla\Psi_{R,p}(y)|^2-p|\upp (y)|^{p-1}\Psi_{R,p}(y)^2 dy,\\ D_p:=\int_{A_p}\frac{\Psi_{R,p}(y)^2}{|y|^2}dy>0.
\end{array}
\]
Then, setting, for $0<a<b$, $A(a,b):=\{a<|y|<b\}$ we have:
\begin{equation}\label{Np su Dp}
\frac{N_p}{D_p}\leq\frac{\overbrace{\int_{A(\frac{\delta\varepsilon^-_p}{R},\delta R\varepsilon^-_p)}\!\!\!\!\!\!\!\!\!\!\!\! (|\nabla\Psi_{R,p}(y)|^2-p|\upp(y)|^{p-1}\Psi_{R,p}(y)^2) dy}^{=:N_{1,p}}+ \overbrace{\int_{A(\frac{\delta\varepsilon^-_p}{2R},\frac{\delta\varepsilon^-_p}{R})}\!\!\!\!|\nabla\Psi_{R,p}(y)|^2 dy}^{=:N_{2,p}}+ \overbrace{\int_{A(R\delta\varepsilon^-_p,2R\delta\varepsilon^-_p)}\!\!\!\!\!\!\!\!\!\!\!\!\!\!\!\!\!\!\!\!|\nabla\Psi_{R,p}(y)|^2 dy}^{=:N_{3,p}}}
{\underbrace{\int_{A(\frac{\delta\varepsilon^-_p}{R},\delta R\varepsilon^-_p)} \frac{\Psi_{R,p}(y)^2}{|y|^2}dy}_{=:D_{1,p}}+ \underbrace{\int_{A(\frac{\delta\varepsilon^-_p}{2R},\frac{\delta\varepsilon^-_p}{R})}\frac{\Psi_{R,p}(y)^2}{|y|^2}dy}_{=:D_{2,p}}+ \underbrace{\int_{A(R\delta\varepsilon^-_p,2R\delta\varepsilon^-_p)}\frac{\Psi_{R,p}(y)^2}{|y|^2}dy}_{=:D_{3,p}}}.
\end{equation}
Computing explicitly $N_{2,p}$ and $N_{3,p}$ we obtain:
\begin{equation}\label{N2p}
\frac{N_{2,p}}{2\pi}=\frac32\frac{(\frac1R)^{2+\gamma}}{(1+(\frac1R)^{2+\gamma})^{2}}\leq\frac{3}{2R^{2+\gamma}}
\end{equation}
and
\begin{equation}\label{N3p}
\frac{N_{3,p}}{2\pi}=3\frac{R^{2+\gamma}}{(1+R^{2+\gamma})^2}\leq\frac3{R^{2+\gamma}}.
\end{equation}
Furthermore we can also easily estimate $D_{1,p}$, $D_{2,p}$ and $D_{3,p}$ as follows:
\begin{equation}\label{D1p}
\frac{D_{1,p}}{2\pi}=\int_{\frac{\delta\varepsilon^-_p}{R}}^{\delta\varepsilon^-_p R}\frac{(\frac{r}{\delta\ep^-_p})^{2+\gamma}}{(1+(\frac{r}{\delta\ep^-_p})^{2+\gamma})^2}\frac1r dr\stackrel{t=(\frac{r}{\delta\varepsilon^-_p})^{2+\gamma}+1}{\leq}\frac1{2+\gamma}\int_{1+(\frac1R)^{2+\gamma}}^{1+R^{2+\gamma}}\frac{dt}{t^2} dt\leq\frac1{2+\gamma},
\end{equation}
\begin{equation}\label{D2p}
\frac{D_{2,p}}{2\pi}=\int_{\frac{\varepsilon^-_p\delta}{2R}}^{\frac{\varepsilon^-_p\delta}{R}}\frac{\psi_p^2(\frac{\delta\varepsilon^-_p}{R})}{(\frac{\delta\varepsilon^-_p}{R})^2}(r-\frac{\delta\varepsilon^-_p}{2R})^2\frac1r dr\leq\int_{\frac{\varepsilon^-_p\delta}{2R}}^{\frac{\varepsilon^-_p\delta}{R}}\frac{1}{R^{2+\gamma}}(\frac{2R}{\delta\varepsilon^-_p})^2(r-\frac{\delta\varepsilon^-_p}{2R})^2 \frac{2R}{\delta\varepsilon^-_p} dr=\frac1{3R^{2+\gamma}},
\end{equation}
\begin{equation}\label{D3p}
\frac{D_{3,p}}{2\pi}=\int_{R\varepsilon^-_p\delta}^{2R\varepsilon^-_p\delta}\frac{\psi_p^2(R\varepsilon^-_p\delta)}{(R\varepsilon^-_p\delta)^2}\frac{(r-2R\varepsilon^-_p\delta)^2}r dr\leq\int_{R\varepsilon^-_p\delta}^{2R\varepsilon^-_p\delta}\frac{1}{R^{2+\gamma}}\frac1{(R\varepsilon^-_p\delta)^3}(r-2R\varepsilon^-_p\delta)^2 dr=\frac1{3R^{2+\gamma}}.
\end{equation}

Let us now estimate $N_{1,p}$. In order to do so we define $\tilde\psi_p(s):=\psi_p(\delta\ep^-_p s)$, for $s\in[\frac 1R,R]$. Then (recalling that $s_p$ is defined as in \eqref{sp}) we have 
\begin{eqnarray}
\frac{N_{1,p}}{2\pi}&=&\int_{\frac1R}^{R}s\left((\tilde\psi'_p(s))^2-\left|\frac{\upp(\delta \ep^-_p s)}{\upp(s_p)}\right|^{p-1}(\tilde\psi_p(s))^2\right)ds\nonumber\\
&\underset{p\to+\infty}{\overset{\eqref{V^-_ptoV^-}}{\longrightarrow}}&\int_{\frac1R}^{R}s\left(\frac{(2+\gamma)^2}{4}\frac{s^{\gamma}(1-s^{2+\gamma})^2}{(1+s^{2+\gamma})^4}-\frac{2(2+\gamma)^2s^\gamma}{(1+s^{2+\gamma})^2}\frac{s^{2+\gamma}}{(1+s^{2+\gamma})^2}\right)ds\nonumber\\
&=&\frac{(2+\gamma)^2}{4}\left[\int_{\frac1R}^{R}\frac{((1+s^{2+\gamma})^2-12s^{2+\gamma})s^{1+\gamma}}{(1+s^{2+\gamma})^4}ds\right]\nonumber\\
&\overset{t=1+s^{2+\gamma}}{=}&\frac{2+\gamma}{4}\left[\int_{1+(\frac1R)^{2+\gamma}}^{1+R^{2+\gamma}}\frac{(t^2-12t+12)}{t^4}dt\right]\nonumber\\
&=&\frac{2+\gamma}{4}\left[-\frac{1}{1+R^{2+\gamma}}+\frac{1}{1+(\frac1R)^{2+\gamma}}+\frac{6}{(1+R^{2+\gamma})^2}-\frac{6}{(1+(\frac1R)^{2+\gamma})^2}+\right.\nonumber\\
& &\phantom{\frac{2+\gamma}{4}[[}\left.-\frac{4}{(1+R^{2+\gamma})^3}+\frac{4}{(1+(\frac1R)^{2+\gamma})^3}\right]\nonumber\\
&\leq&\frac{2+\gamma}{4}\left[1+O(\frac1{R^{2+\gamma}})\right].\nonumber
\end{eqnarray}
Then
\begin{equation}\label{N1p}
\frac{N_{1,p}}{2\pi}\leq-\frac{2+\gamma}{4}\left[1+O(\frac1{R^{2+\gamma}})+o_p(1)\right],
\end{equation}
which is negative for sufficiently large $R$ and $p$.

In conclusion, fixing $R$ sufficiently large, there exists $p_R$ such that for any $p\geq p_R$ we have (collecting \eqref{Np su Dp}, \eqref{N2p}, \eqref{N3p}, \eqref{D1p}, \eqref{D2p}, \eqref{D3p} and \eqref{N1p}):
\[
\frac{N_p}{D_p}\leq\frac{-\frac{2+\gamma}{4}(1+o_R(1)+o_p(1))}{\frac{1}{2+\gamma}(1+o_R(1))}=-\frac{(2+\gamma)^2}{4}(1+o_R(1)+o_p(1))\overset{\eqref{varie}}{=}-\frac{\ell^2+2}{2}(1+o_R(1)+o_p(1))
\]
\end{proof}

\

{\bf{Acknowledgements.}} F. De Marchis and I. Ianni acknowledge the support and the hospitality of the FIM at ETH Z\"urich in the spring 2015, where part of the research contained in this paper was carried out.

\

\end{document}